\newtheorem{theorem}{Theorem}[section]
\newtheorem{proposition}[theorem]{Proposition}
\newtheorem{lemma}[theorem]{Lemma}
\newtheorem{definition}[theorem]{Definition}
\newtheorem*{remark}{Remark}
\newcommand{\E}{\mathbb E}
\newcommand{\N}{\mathbb N}
\newcommand{\PP}{\mathbb P}
\newcommand{\R}{\mathbb R}
\newcommand{\Q}{\mathbb Q}
\newcommand{\eps}{\varepsilon}
\newcommand{\e}{\eps}
\newcommand{\dd}{\, \mathrm{d}}
\newcommand{\tr}{\mbox{tr}}
\newcommand{\1}{\mathds{1}}
\newcommand{\vv}{\langle v\rangle}
\newcommand{\ul}{\rm ul}
\DeclareMathOperator{\supp}{supp}
\numberwithin{equation}{section}
\title[Existence and lower mass bounds for the Landau equation]{Local existence, lower mass bounds, and a new continuation criterion for the Landau equation}
\author{Christopher Henderson}
\address{Department of Mathematics, University of Chicago, 5734 S. University Ave., Chicago, IL 60637}
\email{henderson@math.uchicago.edu}
\author{Stanley Snelson}
\address{Department of Mathematical Sciences, Florida Institute of Technology, 150 W. University Blvd., Melbourne, FL 32901}
\email{ssnelson@fit.edu}
\author{Andrei Tarfulea}
\address{Department of Mathematics, University of Chicago, 5734 S. University Ave., Chicago, IL 60637}
\email{atarfulea@math.uchicago.edu}
\thanks{All three authors were partially supported by NSF grant DMS-1246999.}
\begin{document}

\begin{abstract}
We consider the spatially inhomogeneous Landau equation with soft potentials, including the case of Coulomb interactions. First, we establish the existence of solutions for a short time, assuming the initial data is in a fourth-order Sobolev space and has Gaussian decay in the velocity variable (no decay assumptions are made in the spatial variable). Next, we show that the evolution instantaneously spreads mass to every point in its domain. The resulting pointwise lower bounds have a sub-Gaussian rate of decay, which we show is optimal. 
The proof of mass-spreading is based on a stochastic process associated to the equation, and makes essential use of nonlocality. By combining  this 
theorem with prior regularity results, we derive two important applications: $C^\infty$ smoothing in all three variables, even for initial data with vacuum regions, and a continuation criterion that states the solution can be extended for as long as the mass and energy densities stay bounded from above. This is the weakest condition known to prevent blow-up.  In particular, it does not require a lower bound on the mass density or an upper bound on the entropy density.
\end{abstract}

\maketitle

\section{Introduction}


\subsection{The Landau equation} We are interested in the Landau equation from plasma physics, in three space dimensions:
\begin{equation}\label{e:divergence}
\partial_t f + v\cdot \nabla_x f = Q_L(f,f) = \tr\left(\bar a[f]D_v^2 f\right) + \bar c[f] f.
\end{equation}
For a constant $\gamma >-3$,  the coefficients $\bar a[f](t,x,v)\in \R^{3\times 3}$ and $\bar c[f](t,x,v)\in \R$ are defined by
\begin{align}
\bar a[f] &:= a_{\gamma}\int_{\R^3} \left( I - \frac w {|w|} \otimes \frac w {|w|}\right) |w|^{\gamma + 2} f(t,x,v-w) \dd w,\label{e:a}\\
\bar c[f] &:= c_{\gamma}\int_{\R^3} |w|^\gamma f(t,x,v-w)\dd w, \label{e:c}
\end{align}
where $a_{\gamma}, c_\gamma>0$ are constants and $I$ is the identity matrix on $\R^3$. For $\gamma = -3$, the expression of $\bar a[f]$ is unchanged and the expression for $\bar c[f]$ must be replaced by $c_3 f$ for a fixed constant $c_3>0$.  We assume that initial data $f_{in}(x,v) \geq 0$ is given.   
 The solution $f(t,x,v)\geq 0$ to \eqref{e:divergence} models the evolution of a particle density in phase space. The Landau equation arises as the limit of the Boltzmann equation as grazing collisions predominate. See, for example,  \cite{chapmancowling, lifschitzpitaevskii} for the physical background. We are interested in the case of \emph{soft potentials}, i.e. $\gamma \in [-3,0)$. The case $\gamma \in [-2,0)$ is also called moderately soft potentials, $\gamma \in (-3,-2)$ is called very soft potentials, and $\gamma=-3$ is called the Landau-Coulomb equation.

It is not currently known whether global-in-time classical solutions to \eqref{e:divergence} exist for general, non-perturbative initial data. In this paper, we establish the existence of a $C^\infty$ solution of the Cauchy problem for \eqref{e:divergence} on some time interval $[0,T]$, without a smallness assumption on the initial data, and give a continuation criterion in terms of physically relevant quantities. Define
\begin{align*}
M(t,x) &= \int_{\R^3} f(t,x,v)\dd v, &&\mbox{(mass density)}\\
E(t,x) &= \int_{\R^3} |v|^2 f(t,x,v)\dd v, &&\mbox{(energy density)}\\
H(t,x) &= \int_{\R^3} f(t,x,v)\log f(t,x,v) \dd v. &&\mbox{(entropy density)}
\end{align*}
In the homogeneous setting, i.e.~when there is no $x$ dependence, the mass and energy of the solution are conserved and the entropy is decreasing, but these properties have not been shown for the full inhomogeneous equation.  Several regularity results for weak solutions of \eqref{e:divergence} have been derived under the assumption that $M(t,x)$, $E(t,x)$, and $H(t,x)$ are bounded above, and that $M(t,x)$ is bounded away from zero, including  \cite{cameron2017landau, golse2016, henderson2017smoothing}. (See the Related Work section below.)  The present paper makes  three 
contributions: first, to develop a local existence theory that is compatible with these \emph{a priori} results; second, to remove the assumptions that the mass of $f$ is bounded from below and the entropy is bounded from above 
from the regularity criteria for our solution; third, to show that solutions may be continued so long as the mass and energy can be controlled from above. 

In both the Boltzmann and Landau equations, coercivity of the bilinear collision operator is a crucial ingredient in proving regularization theorems. For the Landau equation, the coercivity of $Q_L(f,f)$ comes from the ellipticity of the matrix $\bar a[f]$, and it is clear from \eqref{e:a} that the ellipticity may degenerate if $M(t,x) = 0$ at some $(t,x)$. 
Our Theorem \ref{thm:mass} implies that, under relatively weak assumptions, $M(t,x)$ is necessarily positive for $t>0$ and $\bar a[f]$ is uniformly elliptic. Our method also allows us to remove the entropy bound from our criteria for smoothness and continuation.

\subsection{Main results} Before stating our results, we define the uniformly local weighted Sobolev spaces we use. Uniformly local spaces (first introduced by Kato in \cite{kato1975hyperbolic}) do not specify any decay as $|x|\to \infty$. Let $\vv = (1+|v|^2)^{1/2}$. For integers $k,\ell \geq 0$, define
\[\|g\|_{H^{k,\ell}_{\ul}(\R^6)} = \sum_{|\alpha|+|\beta|\leq k} \sup_{a\in \R^3}  \int_{\R^6} |\phi(x-a) \vv^\ell \partial_x^\alpha \partial_v^\beta g(x,v)|^2 \dd x \dd v,\]
where $\phi\in C^\infty_0(\R^3)$ is a cutoff satisfying $0\leq \phi \leq 1$, $\phi \equiv 1$ in $B_1$, and $\phi\equiv 0$ in $\R^3\setminus B_2$.  Here, for $\alpha, \beta \in (\N\cup\{0\})^3$, $\partial_x^\alpha = \partial_{x_1}^{\alpha_1}\partial_{x_2}^{\alpha_2}\partial_{x_3}^{\alpha_3}$.  We define $\partial_v^\beta$ similarly. Then, our uniformly local spaces are:
\begin{equation}\label{e:ul-def}
H^{k,\ell}_{\ul}(\R^6) = \{ g : \|g\|_{H^{k,\ell}_{\ul}(\R^6)} < \infty\}.
\end{equation}
We often write $H^{k,\ell}_{\ul} = H^{k,\ell}_{\ul}(\R^6)$. Let $H^{k}_{\ul} = H^{k,0}_{\ul}$, $L^{2,\ell}_{\ul}= H^{0,\ell}_{\ul}$, and $L^2_{\ul} = L^{2,0}_{\ul}$. Our main time-dependent spaces will be 
\begin{equation}\label{eq:Y_definition} 
Y^k_T:= L^\infty([0,T],H^{k}_{\ul}(\R^6)) \cap L^2([0,T], H^{k,1}_{\ul}(\R^6)).
\end{equation}
 
Our first main result is the local well-posedness of \eqref{e:divergence}:
\begin{theorem}\label{t:LWP}
Let $f_{in}(x,v):\R^3\times\R^3\to\R_+$ be such that $e^{\rho_0 \vv^2}f_{in}(x,v) \in H^k_{\ul}(\R^6)$ for some $\rho_0>0$ and some $k\geq 4$. Then for some $T>0$, depending only on $\gamma$, $\rho_0$, $k$, and $\|e^{\rho_0 \vv^2} f_{in}\|_{H^k_{\ul}}$, there is a unique solution $f\geq 0$ to \eqref{e:divergence} with $f(0,x,v) = f_{in}(x,v)$ and $e^{\rho_0 \vv^2/2} f \in Y_T^k \cap C^0([0,T], H^k_{\ul}(\R^6))$.
\end{theorem}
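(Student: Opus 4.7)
The plan is to construct the solution via a Picard-type iteration, establish uniform bounds in the weighted $Y_T^k$ norm, and extract a convergent limit. Starting from $f^0 \equiv f_{in}$, I define $f^{n+1}$ as the nonnegative solution of the linear kinetic Fokker--Planck problem
\[
(\partial_t + v\cdot \nabla_x)f^{n+1} = \tr\bigl(\bar a[f^n] D_v^2 f^{n+1}\bigr) + \bar c[f^n] f^{n+1}, \quad f^{n+1}|_{t=0} = f_{in}.
\]
Each step is well-posed by standard linear theory, after (if necessary) adding an auxiliary viscosity $\eps \Delta_v$ and passing $\eps \to 0$; the maximum principle preserves $f^{n+1} \geq 0$ inductively.

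The heart of the argument is a uniform a priori estimate. Set $g^{n+1} = e^{\rho(t)\vv^2/2} f^{n+1}$ with $\rho(t) = \rho_0(1 - \lambda t)$. Conjugation produces an equation for $g^{n+1}$ with new first- and zeroth-order terms of the form $\rho\, \bar a[f^n]\, v \cdot \nabla_v g^{n+1}$ and $\rho^2\, \bar a[f^n](v,v)\, g^{n+1}$, plus a term $\dot\rho\, \vv^2\, g^{n+1}/2$ coming from differentiating the weight in time. Since $\dot\rho < 0$, this last contribution is coercive in $\vv^2$ and, for $\lambda$ large enough (depending on pointwise bounds on $\bar a[f^n]$), absorbs the worst $v$-growth from the commutator terms. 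Applying $\partial_x^\alpha \partial_v^\beta$ for $|\alpha|+|\beta| \leq k$, testing against $\phi(x-a)^2 \partial_x^\alpha \partial_v^\beta g^{n+1}$, taking the supremum over $a\in\R^3$, and summing over multi-indices, the diffusive term yields a positive contribution that, together with the weight's coercivity, provides the $L^2_t H^{k,1}_{\ul}$ gain in the $Y_T^k$ norm.

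The pointwise bounds $\|\bar a[f^n]\|_{L^\infty} + \|\bar c[f^n]\|_{L^\infty} \leq C(\|g^n\|_{H^k_{\ul}})$ follow for moderately soft $\gamma \in (-2,0)$ from Young's convolution inequality, using $f^n \in L^\infty$ (via Sobolev embedding $H^k_{\ul} \hookrightarrow L^\infty$, valid for $k\geq 4>3$) and $f^n \in L^1_v$ (via Gaussian decay); for very soft potentials $\gamma \in [-3,-2]$ a Hardy--Littlewood--Sobolev-type estimate handles the singular kernel. Analogous estimates control derivatives of $\bar a[f^n]$ and $\bar c[f^n]$, and the product rule is arranged so that top-order derivatives always fall on $g^{n+1}$ (absorbed by the diffusive and weighted coercivities) and never on $\bar a[f^n]$. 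The outcome is
\[
\frac{d}{dt}\|g^{n+1}\|_{H^k_{\ul}}^2 + c\|g^{n+1}\|_{H^{k,1}_{\ul}}^2 \leq C\bigl(\|g^n\|_{H^k_{\ul}}\bigr)\bigl(1 + \|g^{n+1}\|_{H^k_{\ul}}^2\bigr),
\]
and an induction plus Gronwall propagates $\sup_n \|g^n\|_{Y_T^k} \leq 2 \|e^{\rho_0 \vv^2} f_{in}\|_{H^k_{\ul}}$ on a short interval $[0,T]$ with $T$ depending only on the stated data.

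For the Cauchy property, $\delta^n = f^{n+1} - f^n$ satisfies a linear equation whose source involves $\bar a[\delta^{n-1}] D_v^2 f^n$ and $\bar c[\delta^{n-1}] f^n$; a parallel energy estimate in the weaker norm $Y_T^{k-1}$ with a slightly smaller Gaussian rate closes with a factor $C T^{1/2}$, producing a contraction for $T$ small. The limit $f$ inherits the $Y_T^k$ bound, satisfies the Landau equation, and is nonnegative; continuity $C^0([0,T], H^k_{\ul})$ follows from parabolic smoothing and interpolation. Uniqueness is a final weighted $L^2$ estimate on the difference of two solutions with the same data. The main obstacle will be the simultaneous control of the nonlocal, singular coefficients at high Sobolev regularity, especially in the Coulomb case $\gamma = -3$, where the kernel $|w|^{-1}$ in $\bar a$ forces one to interpolate carefully between the Sobolev $L^\infty$ bound on $f^n$ and the $L^1$ moments provided by the Gaussian weight; the time-decreasing weight, and the coercivity in $\vv^2$ it produces, is the crucial device for closing the estimates against the various $v$-growth factors introduced by conjugation.
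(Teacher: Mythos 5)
Your overall architecture — iterate on a linearized kinetic problem, conjugate by a Gaussian weight whose exponent decreases linearly in time so that the resulting $+\kappa\vv^2$ (equivalently, $-\dot\rho\vv^2/2$) term on the left provides coercivity, run weighted $H^k_{\ul}$ energy estimates, and close by Gr\"onwall plus a contraction argument — is the same as the paper's (the paper conjugates once globally and then iterates on the conjugated equation, while you conjugate inside each iteration step; this ordering is immaterial). However, there are two concrete errors in the estimates and one under-developed step.

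First, the claim that $\|\bar a[f^n]\|_{L^\infty} \leq C(\|g^n\|_{H^k_{\ul}})$ for $\gamma\in(-2,0)$ is false. The kernel in $\bar a$ behaves like $|w|^{\gamma+2}$, which \emph{grows} at infinity when $\gamma>-2$, so Young's convolution inequality cannot yield a uniform $L^\infty$ bound. The correct statement (Lemma \ref{l:coeffs}(a) in the paper) is the weighted bound $|\bar a[f^n](v)|\lesssim \vv^{\gamma+2}$, and the entire energy estimate must carry this polynomial weight. The $\vv^{\gamma+2}$ growth is what ultimately competes against the $\kappa\vv^2$ coercivity via Young's inequality (using $\gamma<0$), and you cannot sidestep it by asserting an $L^\infty$ bound that does not hold. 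You do gesture at the right mechanism when you speak of the weight's coercivity absorbing ``$v$-growth factors,'' but the $L^\infty$ claim as stated would not compile into a closed estimate.

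Second, the assertion that ``the product rule is arranged so that top-order derivatives always fall on $g^{n+1}$ ... and never on $\bar a[f^n]$'' contradicts the Leibniz rule: applying $\partial_x^\alpha\partial_v^\beta$ to $\tr(\bar a[f^n]D_v^2 g^{n+1})$ necessarily produces a term where all $m=|\alpha|+|\beta|$ derivatives land on $\bar a[f^n]$, and intermediate distributions as well. These cannot be eliminated by integration by parts (which only moves derivatives between the two $g^{n+1}$ factors and $\bar a$, never removes them from $\bar a$). The paper's proof of Lemma \ref{l:gre} devotes a substantial case analysis to precisely these terms — splitting by $|\alpha'|+|\beta'|$ and choosing the correct H\"older exponents and Sobolev embeddings in each regime — and this is where the requirement $k\geq 4$ actually bites (e.g., $H^2(\R^3)\subset L^\infty(\R^3)$ and the $L^4_x$-$L^4_x$-$L^2_x$ splittings need enough derivatives to spare). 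Without this case analysis your energy inequality does not close.

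Finally, the existence of each linear iterate needs more care than ``standard linear theory after adding $\eps\Delta_v$.'' The linearized operator is hypoelliptic rather than parabolic, and $\eps\Delta_v$ does not regularize in $x$; moreover, $\bar a[f^n]$ degenerates as $|v|\to\infty$ for $\gamma<-2$. The paper's device is to add $\eps\Delta_{x,v}$, truncate to a bounded domain $\Omega_R$, obtain a classical solution, derive $R,\eps$-uniform $Y^k_{T,R}$ bounds, and pass to the limit. You should specify this (or an equivalent) regularization; the single-variable viscosity you suggest is not sufficient for the linear solvability you are invoking.

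Your remaining points — the nonnegativity via maximum principle, the Gr\"onwall closure producing a $T$ depending only on $\|e^{\rho_0\vv^2}f_{in}\|_{H^k_{\ul}}$, the contraction for the difference sequence in a (possibly slightly weaker) norm, and uniqueness via a final weighted $L^2$ estimate — are sound and match the paper's Theorem \ref{t:gh} in structure.
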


While the proof of Theorem 1.1 involves mostly classical techniques, a
local existence result for the Landau equation was previously missing from
the literature (except in the case $\gamma = -3$, see \cite{he2014boltzmannlandau}), and we
believe it is important to fill this gap. Our existence theorem also
provides a convenient framework for our other results.

Note that, in light of the definition of $H_{\ul}^{k,l}$, Theorem 1.1 makes
no assumption on the behavior of $f_{in}$ as $|x| \to \infty$. The
requirement that $f_{in}$ have four Sobolev derivatives is an improvement
over \cite{he2014boltzmannlandau}, and matches the current state of the art for results on the
Boltzmann equation \cite{amuxy2010regularizing, amuxy2011bounded, amuxy2013mild}. At this time, it is unclear whether this
hypothesis can be relaxed further.

Next, by relating equation \eqref{e:divergence} to a certain stochastic process,
we prove that the mass density $M(t,x)$ instantly becomes positive,
and moreover stays uniformly positive on compact sets away from $t=0$.  Before stating this theorem, we define well-distributed initial data, a hypothesis under which we can strengthen our results.
\begin{definition}\label{d:well}
We say that a function $g: \R^3 \times \R^3 \to [0,\infty)$ is \emph{well-distributed with parameters $R, \delta, r>0$} if, for every $x \in \R^3$, there exists $x_m \in B_{R}(x)$ and $v_m \in B_R(0)$ such that $g \geq \delta \1_{B_r(x_m)\times B_r(v_m)}$.
\end{definition}
Heuristically, a function is well-distributed if for every $x \in \R^3$, there is some uniform amount of mass nearby at relatively low velocities.  For a simple example of well-distributed initial data consider the following: if $\chi_0$ and $\chi_{\rm per}$ are continuous, non-negative, non-zero functions, with $\chi_{\rm per}$ periodic, then any $f_0(x,v) \geq \chi_0(v) \chi_{\rm per}(x)$ is well-distributed.

Now we state our mass-pushing theorem, which will be crucial in obtaining the smoothness of solutions to the Landau equation. For technical reasons, we work with solutions of the type constructed in Theorem \ref{t:LWP}, but a similar property should be expected to hold for solutions with weaker decay and regularity. 

\begin{theorem}\label{thm:mass}
Suppose $0<\underline T \leq T \leq \overline T$ and that $f$ is a solution of \eqref{e:divergence} on $[0,T]$ such that $e^{\rho\vv^2}f \in Y^4_T$ for some $\rho >0$, and such that $f$ is non-negative and not uniformly equal to zero.
\begin{enumerate}[(i)]
	\item Then, for any $(t,x) \in [\underline T, T]\times \R^3$, there exists $\nu_{\underline T, x}, \rho_{\underline T,x}>0$ depending only on $\underline T$, $\overline T$, $K$, the initial data, and the quantities
\begin{equation}\label{eq:physical}\begin{cases}
	\displaystyle\sup_{x\in \R^3, t\in [0,T]} \left(M(t,x)+E(t,x)\right), &\hspace{-.2 in} \text{ if } \gamma \in (-2,0), \text{ or }\\
	\displaystyle\sup_{x\in \R^3, t\in [0,T]} \left(M(t,x) + E(t,x)  + P(t,x) + \|f(t,x,\cdot)\|_{L^\infty(\R^6)}\right), & \text{ if }\gamma \in [-3,-2],
\end{cases}
\end{equation} 
such that, for all $v\in \R^3$,
\[
		f(t,x,v) \geq \nu_{\underline T,x} \exp \left\{ - \rho_{\underline T,x} |v|^{\max\{3-\gamma,4\}}\right\}.
\]
Here, $P(t,x) = \int_{\R^3}|v|^p f(t,x,v)\dd v$ with $p > 3|\gamma| / (5+\gamma)$.
\item If, in addition, 
$f(0,\cdot,\cdot)$ is well-distributed for some parameters $R$, $\delta$, $r$, then, we find $\nu_{\underline T}, \rho>0$ depending on $R$, $\delta$, $r$, $\overline T$, $K$, and the quantities~\eqref{eq:physical}, with $\nu_{\underline T}$ depending additionally on $\underline T$, such that, for all $(t,x,v) \in [\underline T, T]\times \R^3 \times \R^3$
\begin{equation}\label{eq:Gaussian}
	f(t,x,v) \geq \nu_{\underline T} \exp\left\{ - \rho |v|^{2-\gamma}\right\}.
\end{equation}
\end{enumerate}
\end{theorem}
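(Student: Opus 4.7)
The plan is to represent $f$ through a Feynman--Kac formula for the hypoelliptic diffusion naturally associated to the linearisation of \eqref{e:divergence} around $f$ itself, and then to prove quantitative reachability lower bounds for this diffusion. Because $f\in Y^4_T$, the coefficients $\bar a[f]$ and $\bar c[f]$ are smooth, and the (time-reversed) SDE
\[
    \dd X_s = -V_s\,\dd s,\qquad \dd V_s = \sigma(t-s, X_s, V_s)\,\dd B_s,\qquad (X_0,V_0)=(x,v),
\]
with $\sigma\sigma^\top = 2\bar a[f](t-s,X_s,V_s)$, is well posed. Itô's formula applied to $s\mapsto f(t-s,X_s,V_s)\exp\bigl(\int_0^s \bar c[f](t-r,X_r,V_r)\,\dd r\bigr)$ shows this is a local martingale, and taking expectations gives
\[
    f(t,x,v) = \E_{x,v}\Bigl[f_{in}(X_t,V_t)\exp\Bigl(\int_0^t \bar c[f](t-r,X_r,V_r)\,\dd r\Bigr)\Bigr] \geq \E_{x,v}\bigl[f_{in}(X_t,V_t)\bigr],
\]
because $\bar c[f]\geq 0$. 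The theorem is thereby reduced to lower bounds on the probability that $(X_s,V_s)$ lands on a ball where $f_{in}$ is bounded below.

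The hypotheses \eqref{eq:physical} yield the usual Landau ellipticity estimates: $\bar a[f]$ has smallest eigenvalue of order $\vv^\gamma$ (parallel to $v$) and largest eigenvalue of order $\vv^{\gamma+2}$ (transverse). The SDE is therefore a hypoelliptic Kolmogorov diffusion with a velocity-dependent degenerating matrix; an Aronson-type lower bound for its transition density should take the schematic form
\[
    p_t(x,v;x_*,v_*) \gtrsim \exp\Bigl(-c\Bigl[\tfrac{|v-v_*|^2}{t\vv^\gamma} + \tfrac{|x-x_* - t(v+v_*)/2|^2}{t^3\vv^\gamma}\Bigr]\Bigr).
\]
For part (ii), well-distributedness supplies for each $x$ a ball $B_r(x_m)\times B_r(v_m)$ with $f_{in}\geq\delta$, $|x_m-x|\leq R$, and $|v_m|\leq R$; substituting these bounded offsets into the exponent and using $t\in[\underline T, T]$ gives a total cost $\lesssim |v|^{2-\gamma}/\underline T$, which yields \eqref{eq:Gaussian}.

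For part (i), no well-distributedness is assumed, so the target $(x_*,v_*)$ where $f_{in}$ is positive has an uncontrolled offset from $x$. The strategy is a two-step bootstrap: on a short interval $[0,\tau]$, the Feynman--Kac representation together with the hypoelliptic mixing and the nonlocal positivity of $\bar c[f]$ promotes the bare non-triviality of $f_{in}$ into quantitative local positivity of $f(\tau,\cdot,\cdot)$ on a ball whose location depends on $x$; on $[\tau, t]$, the reachability argument of (ii) is then applied to this $x$-dependent well-distributed state. Because the trajectory cannot be chosen freely (the process must both decelerate from $v$ to a bounded $v_*$ and arrange that $\int_0^t V_s\,\dd s$ matches the fixed displacement $x-x_*$), the optimal two-phase path (fast deceleration over $\tau_1\sim t/|v|$ followed by a bounded-speed cruise) is more expensive than in (ii), and the optimisation produces the exponent $\max\{3-\gamma, 4\}$, with the first value dominating in the very soft regime and the second in the moderately soft regime.

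The main obstacle is making the Aronson-type lower bound rigorous for this hypoelliptic operator, whose diffusion matrix degenerates at infinity (at two different scales) and depends nonlinearly on $f$; the heat-kernel lower bounds in the literature typically assume uniform ellipticity, and adapting them to Landau-type degeneracy while tracking the $|v|$-dependence of the exponent is the principal technical difficulty. A secondary obstacle, specific to part (i), is carrying out the preliminary spreading using only that $f_{in}$ is non-trivial, which forces one to use the full nonlocal structure of the collision operator rather than any pointwise information.
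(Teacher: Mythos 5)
Your Feynman--Kac reduction is exactly the starting point of the paper (Lemma~\ref{lem:probabilistic}), including the step of cutting off $\bar a$ at infinity and adding $\eps I$ to make the SDE well posed and the representation rigorous, which you gloss over but which is a minor fix. Beyond that, however, the two main steps you propose are not what the paper does, and one of them is a genuine gap.

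The gap: you reduce everything to an Aronson-type two-sided heat-kernel bound for the kinetic Kolmogorov diffusion with velocity-degenerating coefficients, and then acknowledge that ``the heat-kernel lower bounds in the literature typically assume uniform ellipticity'' and that adapting them is ``the principal technical difficulty.'' That difficulty is precisely why the paper does not take this route. No such Gaussian lower bound is available off the shelf for a diffusion whose matrix $\bar a[f]$ degenerates anisotropically as $\vv^\gamma$/$\vv^{\gamma+2}$ at infinity, depends nonlinearly on the unknown $f$, and has Lipschitz constants that are only controlled after regularization. As written, your argument would compile only if this theorem were supplied, so the proof is incomplete at the central step. The paper avoids the issue entirely by estimating hitting probabilities directly: Step~1 controls excursions with Markov's inequality, Doob's maximal inequality, and the It\^o isometry (so the drift-free short-time confinement is elementary); Step~2 uses a Girsanov change of measure to force $V_s$ to follow a prescribed straight line from $v$ to $v_0$, with the exponential cost $|v-v_0|^2/(t\,\lambda_0^2\vv^\gamma)$ estimated via Jensen, Cauchy--Schwarz, and the ellipticity lower bound from Lemma~\ref{l:lower}; Step~3 uses the pure transport to reach an arbitrary $x$ with a bounded window of velocities; Step~4 re-runs Step~2 at the new $x$. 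This sequence manufactures the analogue of the Aronson bound by hand, exactly in the regime needed, and naturally produces the constraint $t\lesssim t_c(|v|)$ that generates the exponent $\max\{3-\gamma,4\}$.

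For part~(ii) your plan is also off the paper's path, and here the discrepancy is worth noting because the paper's route is more elementary and delivers the sharper exponent $2-\gamma$ (which the probabilistic machinery does not obviously produce). After the probabilistic argument has spread a uniform-in-$x$ amount of mass to a fixed ball $B_1(0)$ in velocity, the paper switches methods entirely and compares $f$ to the explicit barrier $\underline f(t,x,v)=\delta_1\exp\{-\beta(t)|v|^{2-\gamma}\}$, choosing $\beta(t)=1+C_1/(t-\underline T/2)$ so that $\underline f$ is a subsolution of the linearized operator $L$ on $\{|v|\geq1\}$. The upper bound $\bar a_{ij}v_iv_j\lesssim\vv^{\gamma+2}$, the lower bound from Lemma~\ref{l:lower}, and $\bar c[f]\geq0$ make $L\underline f\leq 0$. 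This comparison-principle step, not a heat-kernel estimate, is what gives the optimal rate in~\eqref{eq:Gaussian} (and Proposition~\ref{p:upper_Gaussian} shows it is optimal). Your proposal would instead try to extract $2-\gamma$ directly from the conjectural Aronson bound; even granting that bound, you would still have to show the exponent improves under well-distributedness, whereas the barrier argument gets it in three lines. So in addition to closing the Aronson gap, you should replace your part~(ii) step with a barrier/comparison argument.
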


In the well-distributed case of~\eqref{eq:Gaussian}, one might expect a lower bound like $e^{-\rho|v|^2}$, as these ``Maxwellians'' are the steady state solutions of the Landau equation.  The above result, however, is sharp for well-distributed initial data, as we demonstrate in Proposition \ref{p:upper_Gaussian}. 
In light of results that show convergence to Maxwellians for \emph{a priori} global solutions such as \cite{desvillettes2005trend}, or for solutions starting close to equilibrium such as \cite{mouhot2006equilibrium, strain2008exponential}, we infer that the comparatively fatter tails of the Maxwellians form as $t\to\infty$. 
For non-well-distributed initial data, it is not clear whether the lower bounds in Theorem \ref{thm:mass}(i) are optimal. We leave this question for future work.

The proof of Theorem \ref{thm:mass} relies on probabilistic methods to show that a positive amount of mass anywhere in space and velocity can spread (at least a small amount) to any other location and velocity instantaneously. Given the kinetic setting, which naturally involves following ``random'' trajectories, probabilistic methods seem well-adapted to the problem, and there is a somewhat rich history of using stochastic processes to study kinetic equations (see Section \ref{s:prob} below).

Theorem \ref{thm:mass} implies in particular that $M(t,x) = \int_{\R^3} f(t,x,v) \dd v >0$ for every $t$ and $x$, that the positive lower bound is uniform locally in $t>0$ and $x\in\R^3$, and that it is uniform in $x$, for fixed $t$, when the initial data is well-distributed.  
Theorem \ref{thm:mass} also implies 
$\bar a[f]$ is uniformly elliptic.  An almost immediate consequence of this, along with the Schauder estimates of~\cite{henderson2017smoothing}, is the smoothness of $f$, as stated in the following:
\begin{theorem}\label{thm:smooth}
Suppose $0 < \underline T \leq T \leq \overline T$ and that $f$ solves \eqref{e:divergence} with $e^{\rho\vv^2} f \in Y_T^4$ for some $\rho >0$. Then $f \in C^\infty((0,T]\times \R^3\times \R^3)$, and for every $(t,x)\in [\underline T, T]\times \R^3$, we have $\|\vv^\ell f(t,x,\cdot)\|_{H^k(\R^3)} \leq C_{k,\ell,t,x}$ for all integers $k,\ell\geq 0$.  If the initial data is well-distributed, then $\|\vv^\ell f \|_{L^\infty([\underline T,T], H^k(\R^6))} \leq C_{k,\ell, \underline T,\overline T}$ for all integers $k,\ell \geq 0$.
\end{theorem}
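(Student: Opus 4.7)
The plan is to use Theorem~\ref{thm:mass} to establish uniform ellipticity of $\bar a[f]$ on compact $(t,x)$-sets away from $t=0$, which reduces \eqref{e:divergence} to a linear kinetic Fokker--Planck equation in $f$ with coefficients $(\bar a[f],\bar c[f])$, and then to iterate the Schauder estimates of \cite{henderson2017smoothing}. To apply Theorem~\ref{thm:mass} one must control the physical quantities in \eqref{eq:physical}. Since $4>6/2$, the Sobolev embedding $H^4(\R^6)\hookrightarrow L^\infty(\R^6)$ applied through the cutoff $\phi(\cdot-a)$ in the definition of $H^4_{\ul}$ upgrades the assumption $e^{\rho\vv^2}f\in Y_T^4$ to the pointwise bound $f(t,x,v)\leq Ce^{-\rho\vv^2}$ uniformly in $(t,x,v)$. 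Integrating in $v$ against $1$, $|v|^2$, and $|v|^p$ gives uniform upper bounds on $M$, $E$ and $P$ (for every $p$), and the pointwise bound controls $\|f(t,x,\cdot)\|_{L^\infty}$ directly, so \eqref{eq:physical} holds for all $\gamma\in[-3,0)$. Theorem~\ref{thm:mass}(i) then furnishes the lower bound $f(t,x,v)\geq \nu_{\underline T,x}\exp(-\rho_{\underline T,x}|v|^{\max\{3-\gamma,4\}})$ on $[\underline T,T]\times\{x\}\times\R^3$.

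\textbf{Ellipticity and Schauder bootstrap.} The lower bound on $f$, combined with the standard computation used to bound $\bar a[f]$ from below, yields a matrix inequality of the form $\bar a[f](t,x,v)\succeq c_{\underline T,x}\vv^{\gamma}I$ on each compact $(t,x)$-set. The upper bounds on $M$ and $E$ (together with $\|f\|_{L^\infty}$ in the case $\gamma\leq -2$, needed to handle the singularity of $|w|^{\gamma+2}$) produce matching upper bounds on $\bar a[f]$ and $\bar c[f]$, and the regularity of $f$ transfers to these convolutional coefficients by differentiation under the integral. Thus \eqref{e:divergence} viewed as a linear Fokker--Planck equation satisfies the hypotheses of the Schauder-type estimate of \cite{henderson2017smoothing}, giving local $C^{2,\alpha}$ bounds on $f$ in the kinetic H\"older scale. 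Differentiating \eqref{e:divergence} in $x$ and $v$ shows that each $\partial_x^\alpha\partial_v^\beta f$ solves a linear equation of the same form, with a forcing term polynomial in lower-order derivatives; iterating the Schauder estimate then yields $C^k$ bounds for every $k$, and hence $f\in C^\infty((0,T]\times\R^3\times\R^3)$. The polynomial $\vv^\ell$ weights in the conclusion are inherited from the ambient factor $e^{\rho\vv^2}$, propagated to each derivative through weighted-in-$v$ energy estimates of the same type used to establish Theorem~\ref{t:LWP}.

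\textbf{Well-distributed case and main obstacle.} When $f_{in}$ is well-distributed, part (ii) of Theorem~\ref{thm:mass} provides the lower bound uniformly in $x$, so every step of the bootstrap is uniform in $x$ and the conclusion improves from $x$-local Sobolev bounds to global ones in $H^k(\R^6)$. The main technical difficulty is twofold: first, verifying that at each bootstrap step the coefficients $\bar a[f]$ and $\bar c[f]$, computed from the current regularity of $f$, meet the exact H\"older hypotheses (including the dependence on $v$ at infinity, particularly delicate when $\gamma\leq -2$) required by the Schauder estimate of \cite{henderson2017smoothing}; second, ensuring that the Gaussian weight in $v$ is preserved under differentiation so that all polynomial $v$-moments of $\partial^\alpha f$ remain finite. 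Both are natural by-products of the iteration but require careful weighted bookkeeping, which constitutes the bulk of the formal proof. Once this is in place, the passage from $C^{2,\alpha}$ to $C^\infty$ follows by a standard induction on the order of differentiation.
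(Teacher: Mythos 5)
Your proposal follows the same route as the paper: Sobolev embedding gives pointwise Gaussian decay and hence upper bounds on all the physical quantities in \eqref{eq:physical} (and on the entropy $H$); Theorem \ref{thm:mass} with Lemma \ref{l:lower} gives locally uniform ellipticity of $\bar a[f]$; and then the Schauder machinery of \cite{henderson2017smoothing} gives $C^\infty$ regularity with Gaussian (hence all polynomial) moments. The only difference in presentation is that you unwind the bootstrap internal to \cite[Theorem 1.2]{henderson2017smoothing}, whereas the paper simply invokes that theorem directly, with the observation that its stated hypothesis of a \emph{globally} uniform lower mass bound can be relaxed to the locally uniform one provided by Theorem \ref{thm:mass}(i) because all of its estimates are local in $(t,x)$; you implicitly address the same point by working on compact $(t,x)$-sets. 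This matches the paper's argument in substance, and the well-distributed case is handled identically via Theorem \ref{thm:mass}(ii).
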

\Cref{thm:smooth} applies in particular to the solution constructed in \Cref{t:LWP}.  To our knowledge, this is the first $C^\infty$ solution to the Cauchy problem for \eqref{e:divergence} that does not require perturbative initial data.  Also, note that the initial data of our solution may contain vacuum regions.

Finally, we show that our solution can be extended as long as the physical quantities remain bounded above.  In particular, this implies that any blow-up of solutions to the Landau equation with suitable initial data must occur at the level of the quantities~\eqref{eq:physical}.  We state this roughly at present, for the ease of the reader (see \Cref{t:extension2} for a more detailed statement).
\begin{theorem}\label{t:extension}

If the initial data $f_{in}$ is well-distributed and $e^{\rho_0 \vv^2} f_{in} \in H^4_{ul}$ for some $\rho_0 >0$, then a unique solution to \eqref{e:divergence} exists for as long as the quantity \eqref{eq:physical} remains finite.
\end{theorem}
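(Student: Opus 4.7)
The argument is a standard continuation argument based on Theorem~\ref{t:LWP}. Let $T^\ast$ be the supremum of times $T > 0$ for which a solution with $e^{\rho \vv^2} f \in Y_T^4$ exists (for some $\rho > 0$); Theorem~\ref{t:LWP} guarantees $T^\ast > 0$. Suppose for contradiction that $T^\ast < \infty$ while the quantity \eqref{eq:physical} stays bounded on $[0, T^\ast)$. The plan is to show that $\|e^{\rho' \vv^2} f(t, \cdot, \cdot)\|_{H^4_{\ul}}$ remains bounded as $t \nearrow T^\ast$ for some $\rho' > 0$ independent of $t$, and then to apply Theorem~\ref{t:LWP} at a time just before $T^\ast$ to continue the solution past $T^\ast$, contradicting maximality.

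Since $f_{in}$ is well-distributed, I would first apply Theorem~\ref{thm:mass}(ii) on each subinterval $[0, T]$ with $T < T^\ast$ to produce constants $\nu, \rho_1 > 0$ — depending on $T^\ast$, the well-distribution parameters, and the bound on \eqref{eq:physical}, but not on $T$ — such that $f(t, x, v) \geq \nu \exp(-\rho_1 |v|^{2-\gamma})$ on $[\underline T, T^\ast) \times \R^3 \times \R^3$ for any fixed $\underline T \in (0, T^\ast)$. Combined with the upper bounds in \eqref{eq:physical}, this yields uniform upper and lower ellipticity for $\bar a[f]$ on $[\underline T, T^\ast) \times \R^3$. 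Theorem~\ref{thm:smooth} then provides constants $C_{k,\ell}$ independent of $T < T^\ast$ with $\|\vv^\ell f\|_{L^\infty([2\underline T, T], H^k(\R^6))} \leq C_{k,\ell}$ for all integers $k, \ell \geq 0$.

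The next step is to upgrade these polynomial moment bounds to a uniform Gaussian moment bound. I would run a weighted $L^2$ energy estimate in $H^4_{\ul}$ on $g := e^{\rho'(t)\vv^2} f$, where $\rho'(t) \in (0, \rho_0/2]$ is chosen to decrease in time. The commutators between the exponential weight and the diffusion $\bar a[f] D_v^2$ behave roughly like $|v|^2 g$ times $\bar a[f]$, which is not directly absorbable; however, the time-dependence of $\rho'$ contributes a term of the form $\dot \rho'(t) |v|^2 g^2$ that, together with the dissipation produced by the uniform ellipticity of $\bar a[f]$, absorbs the bad commutator provided $\rho'(t)$ is chosen to decrease fast enough. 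The remaining lower-order commutators with the convolutional coefficients $\bar a$ and $\bar c$ are controlled using the polynomial-moment and smoothness bounds from the preceding paragraph. This yields the uniform bound $\sup_{t \in [2\underline T, T^\ast)} \|e^{\rho'(T^\ast) \vv^2} f(t, \cdot, \cdot)\|_{H^4_{\ul}} < \infty$.

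Finally, the short time $\tau$ from Theorem~\ref{t:LWP} depends only on the weighted $H^4_{\ul}$ norm of the initial datum, so launching the Cauchy problem at any $t_0 \in [2\underline T, T^\ast)$ with datum $f(t_0, \cdot, \cdot)$ produces a solution on $[t_0, t_0 + \tau]$ for a $\tau > 0$ independent of $t_0$. Choosing $t_0 > T^\ast - \tau$ and using the uniqueness in Theorem~\ref{t:LWP} extends $f$ past $T^\ast$, contradicting maximality. The main technical obstacle is the weighted energy estimate of the third paragraph: tuning $\rho'(t)$ so that the Gaussian weight survives uniformly up to $T^\ast$ while all commutators with the nonlocal coefficients are absorbed is the delicate step, particularly in the very soft range $\gamma \in [-3, -2]$, where the estimates on $\bar a$ and $\bar c$ require the additional $L^\infty$ and $P$ control built into \eqref{eq:physical}.
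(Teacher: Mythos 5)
Your outer continuation scaffold, and the use of Theorem~\ref{thm:mass} together with Lemma~\ref{l:lower} to get uniform lower ellipticity from the well-distributed hypothesis, match the paper.  The divergence, and the gap, is in how the uniform Gaussian-weighted $H^4_{\ul}$ bound up to $T^\ast$ is obtained.  The paper does not run a weighted energy estimate at all: it applies \cite[Theorem~1.2]{henderson2017smoothing} (which rests on the Gaussian-bound propagation of \cite{cameron2017landau}, extended to $\gamma\in[-3,-2]$ and to derivatives in \cite{henderson2017smoothing}).  The crucial feature of that result is that the constants in the output Gaussian bound $\|e^{\mu\vv^2}f(t,\cdot,\cdot)\|_{H^4_{\ul}}\le C$ depend only on $t_0$, $\overline T$, $\gamma$, the well-distribution parameters, the upper bounds on \eqref{eq:physical}, and the Gaussian decay of the \emph{initial} data --- not on any stronger norm of $f$ at later times.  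That is what breaks the circularity and lets $T_1$ be chosen independent of $T$.

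Your plan instead reads Theorem~\ref{thm:smooth} as producing polynomial moment bounds $\|\vv^\ell f\|_{L^\infty([2\underline T,T],H^k)}\le C_{k,\ell}$ that are ``independent of $T<T^\ast$,'' and then tries to upgrade these to a Gaussian bound by a weighted $L^2$ energy estimate with a time-decaying exponent $\rho'(t)$.  Two problems.  First, the uniformity in $T$ of the constants in Theorem~\ref{thm:smooth} is not free: tracing the proof, those constants depend on the assumed bound $\|e^{\rho\vv^2}f\|_{Y^4_T}$, which is precisely what you are trying to control as $T\nearrow T^\ast$.  To justify uniformity you would have to invoke the a priori Gaussian propagation of \cite{cameron2017landau,henderson2017smoothing} --- at which point the Gaussian-weighted $H^4_{\ul}$ bound is already in hand and the subsequent energy estimate is redundant.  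Second, if you do not invoke that result, the $H^4_{\ul}$ commutator analysis you sketch requires bounding $\bar a[\partial_x^\alpha\partial_v^\beta f]$ and $\bar c[\partial_x^\alpha\partial_v^\beta f]$ for $|\alpha|+|\beta|=4$ by quantities uniform in $T$; but Lemma~\ref{l:coeffs} bounds these only by Sobolev norms of $g=e^{\rho'(t)\vv^2}f$, i.e.\ by the quantity under estimation, and no substitute estimate in terms of \eqref{eq:physical} plus polynomial moments alone is supplied.  You correctly identify this as ``the delicate step,'' but it is left unresolved, and it is exactly the place where the paper sidesteps the difficulty by appealing to the pointwise maximum-principle/Schauder route of \cite{cameron2017landau,henderson2017smoothing} rather than a global weighted $L^2$ estimate.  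The decreasing-in-time weight $\rho'(t)$ with an ODE of Riccati type $\dot\rho'\sim-\rho'^2$ that you propose is exactly the mechanism of Proposition~\ref{p:upper_Gaussian}, but there it is run at the pointwise level against the anisotropic coefficient bounds; lifting it to $H^4_{\ul}$ with nonlocal coefficients is the hard part.
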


In the case $\gamma \in (-2,0)$, Theorem \ref{t:extension} gives a physically meaningful continuation criterion.  Namely, that blow-up can be prevented, and the solution extended, by obtaining upper bounds on the mass and energy.  When $\gamma \in[-3,-2]$, one must control also the higher moment $P$ and the $L^\infty$ norm of $f$. The additional restrictions in \Cref{thm:mass,t:extension} in the case $\gamma \in [-3,-2]$ (see~\eqref{eq:physical}) are inherited from~\cite{henderson2017smoothing}.  This is related to the challenging issue of finding an \emph{a priori} $L^\infty$ bound for $f$ when $\gamma \leq -2$, which is open even in the spatially homogeneous case.

The requirement that the quantities \eqref{eq:physical} remain finite is the weakest known condition for global existence of solutions to the inhomogeneous Landau equation. 
We emphasize that we do not require an \emph{a priori} positive lower bound on the mass density $M(t,x)  = \int_{\R^3} f(t,x,v)\dd v$, as is required in \cite{cameron2017landau, golse2016, henderson2017smoothing, silvestre2015landau}.  While earlier regularity results for the Landau equation such as \cite{golse2016, henderson2017smoothing} were based on adapting the corresponding theory for local equations, our proof of Theorem \ref{thm:mass} makes essential use of nonlocality, since the spreading of mass from $(t_0,x_0,v_0)$ to $(t_1,x_1,v_1)$ relies on velocities that are in general far from $v_0$ or $v_1$.



\subsection{Related work}

%
%

So far, global-in-time classical solutions to \eqref{e:divergence} have only been constructed for initial data close to an equilibrium state: see the work of Guo \cite{guo2002periodic} in the $x$-periodic case and Mouhot-Neumann \cite{mouhot2006equilibrium} with $x\in \R^3$. For general initial data, Villani \cite{villani1996global} showed the existence of renormalized solutions with defect measure. 
Existence or non-existence of classical global-in-time solutions 
for general initial data remains a challenging open problem.
Regarding short-time existence, spatially periodic classical solutions were found by He-Yang \cite{he2014boltzmannlandau} in the Coulomb case ($\gamma = -3$) by taking the grazing collisions limit in their estimates on the Boltzmann collision operator. They take initial data in a weighted $H^7_{x,v}$ space, with mass density bounded away from zero. Compared to \cite{he2014boltzmannlandau}, the present paper makes a stronger decay assumption on $f_{in}$ in $v$, but improves on the required number of derivatives, covers both the cases $\gamma = -3$ and $\gamma \in (-3,0)$, 
and allows $f_{in}$ to have vacuum regions, which is more satisfactory on physical grounds. 


There is a previous ``mass pushing'' result for the Boltzmann equation, due to Briant~\cite{Briant}, which is obtained on $\mathbb{T}^3 \times \R^3$ or $\Omega\times \R^3$, where $\Omega$ is a smooth, convex domain. It is shown that vacuum regions are immediately filled and the solution obeys a lower bound of the form $f(t,x,v) \gtrsim \exp\{-|v|^K/C\}$ for some $C$ and some explicit $K$. However, the methods of~\cite{Briant} leverage the fact that the differential operator in the Boltzmann equation is an integral operator; this is advantageous for obtaining pointwise bounds, but is unavailable for the Landau equation.   Further, the lower bound of~\cite{Briant} depends on stronger norms of $f$ and the constant $K$ tends to infinity in the grazing collisions limit that recovers Landau from Boltzmann.  As such, our proof is completely independent of that in~\cite{Briant}.  We mention also the earlier work of~\cite{Mouhot, PulvirentiWennberg}, on which~\cite{Briant} is based.  We note the relative simplicity of our proof when compared to that of~\cite{Briant}.

Our regularity results make use of prior work from the last few years on weak solutions of \eqref{e:divergence} with $M(t,x)$, $E(t,x)$, and $H(t,x)$ bounded above, and $M(t,x)$ bounded below. In that context,  Golse-Imbert-Mouhot-Vasseur \cite{golse2016} showed local H\"older continuity (see also Wang-Zhang \cite{Wang2011ultraparabolic}), Cameron-Silvestre-Snelson \cite{cameron2017landau} showed global H\"older continuity and propagation of Gaussian bounds (in the case $\gamma \in (-2,0)$), and Henderson-Snelson \cite{henderson2017smoothing} established $C^\infty$ regularity for $\gamma <0$, with stronger assumptions on $f$ in the case $\gamma \leq -2$. At least for solutions in the class we consider, Theorem \ref{thm:mass} allows us to improve this regularity criterion. Earlier smoothing results for \eqref{e:divergence} such as \cite{chen2009smoothing, liu2014regularization} that make much stronger assumptions on $f$ (infinitely many moments in $v$ bounded in $H^8_{x,v}$) also include a condition about $f$ having mass bounded below, either explicitly or as part of the assumption that $f$ is close to a Maxwellian equilibrium. The same is true of many smoothing results for the Boltzmann equation such as \cite{amuxy2009regularity, amuxy2010regularizing, chen2012smoothing}. 

There has also been a great deal of work on existence and regularity for the spatially homogeneous Landau equation, which results from taking $f$ independent of $x$ in \eqref{e:divergence}. We refer to  \cite{alexandre2015apriori, desvillettes2015landau, desvillettes2000landau, gualdani2014radial, gualdani2017landau, silvestre2015landau, villani1998landau, wu2014global} and the references therein.

\subsection{Probabalistic approaches to kinetic equations}\label{s:prob}

Early work related the homogeneous Boltzmann and Landau equation to a fully nonlinear stochastic process which, through Malliavin calculus or Martingale theory, could recover weak solutions to the equations (see \cite{Tanaka1979, Funaki1985}). By relating these processes to certain Wasserstein distances, the weak function-solutions could be shown to be unique (see \cite{TV1999, FG2008Boltzmann, FG2009Landau} and references therein); i.e., the distance between two weak function-solutions is nonincreasing along the flow generated by the equation. The techniques have since been adapted to show higher regularity (with some a priori assumptions) for solutions to the homogeneous equations for the case of Maxwell molecules (see for instance \cite{Guerin2002, GMN2006, Fournier2008, DMN2015}), though the techniques can also apply to moderately soft potentials \cite{Guerin2003}. These approaches are limited to the homogeneous (and largely measure-valued) setting because they relate the equations to a fully nonlinear stochastic process, which is then used to build the solutions. To the best of our knowledge, Theorem \ref{thm:mass} is the first application of ideas in probability to the mass distribution for the {\em inhomogeneous} Landau equation. The crucial difference is that we know, from Theorem \ref{t:LWP}, that a unique solution $f$ already exists; and, moreover, that it is H\"{o}lder continuous. For the proof of Theorem \ref{thm:mass}, we only need to relate the linearized Landau equation to a much simpler process (see Lemma \ref{lem:probabilistic}). Powerful pre-existing techniques are then applied to obtain a much shorter (and more precise) proof.


\subsection{Proof ideas}
The strategy of our proof of Theorem \ref{t:LWP} is as follows. First, we divide $f$ by a Gaussian with time-dependent decay. The equation \eqref{e:g} for the resulting function $g$ is approximated in multiple steps: we first solve a linearized version of the equation on a bounded domain with an extra diffusive term (Lemma \ref{l:parabolic}). By deriving appropriate uniform estimates, we use a compactness argument to take the limit as the size of the domain increases and the added diffusion goes to zero to find a solution of the linearized Landau equation on the whole space (Lemma \ref{l:gre}). Finally, we solve \eqref{e:g} by iteration, making use of our estimates for the linearized problem. This method is in some ways inspired by previous work on local well-posedness for the non-cutoff Boltzmann equation by the AMUXY group (Alexandre, Morimoto, Ukai, Xu and Yang), see \cite{amuxy2010regularizing, amuxy2011bounded, amuxy2013mild}. Those papers use an approximation scheme based on cutting off the angular singularity in the Boltzmann collision kernel.  Such an approximation is not available for the Landau equation because the Landau equation results from focusing on grazing collisions in the Boltzmann equation, i.e.~taking the limit where the angular singularity essentially becomes a derivative in $v$. 
We point out that our proof covers all cases $\gamma \in [-3,0)$, which requires extra care, while \cite{amuxy2011bounded, amuxy2013mild} make the restriction that $\gamma >-3/2$, and \cite{amuxy2010regularizing} replaces the factor $|v-w|^\gamma$ in the Boltzmann collision kernel with $(1+|v-w|^2)^{\gamma/2}$, which also sidesteps the difficulties associated with very soft potentials.

 Roughly speaking, mass spreading (Theorem \ref{thm:mass}) holds because nonzero initial data $f_{in}$ must have a ``core'' of positive mass near some point $(x_0,v_0)$, which spreads out instantaneously in $v$ because of the diffusive property of the equation, and some small amount of this mass is in turn spread out to any point $x$ at any time $t$ because of the pure advective term. By relating the value of $f$ to the expectation of a random variable (Lemma \ref{lem:probabilistic}), we show these properties by analyzing the associated stochastic process.   Here it is important to understand the trajectories along which the equation propagates information.  This allows us to roughly estimate how the process spreads mass from one point to another in $\R^3\times \R^3$.
 This mass-spreading leads almost immediately to \Cref{thm:smooth}, as mentioned above.  

To prove Theorem \ref{t:extension}, we need to apply the main theorem of \cite{henderson2017smoothing}, which states that weak solutions of \eqref{e:divergence} with Gaussian-decaying initial data are smooth for all $t>0$ provided $M(t,x)$, $E(t,x)$, and $H(t,x)$ are bounded above, and $M(t,x)$ is bounded below. With Lemma \ref{l:lower}, we can derive lower ellipticity constants for $f$ directly from the lower bounds of \Cref{thm:mass}, which allows us to side-step the conditions that $M(t,x)$ is bounded below and $H(t,x)$ is bounded above. 
Combining the estimates from \cite{henderson2017smoothing} with the results in \cite{cameron2017landau}, we obtain a Gaussian bound on $f$ at time $T$.  Applying \Cref{t:LWP} with initial data $f(T,\cdot,\cdot)$ provides the extension. Here it is crucial that the bounds obtained in \Cref{thm:mass} depend only on those quantities in~\eqref{eq:physical}.  

\subsection{Organization of the paper}  In Section \ref{s:prelim}, we establish various estimates on the coefficients of the equation that will be needed in the proof of existence. In Section \ref{s:LWP}, we prove local-in-time existence for $f$ (Theorem \ref{t:LWP}), and in Section \ref{s:mass}, we establish the mass-spreading property of the equation along with the sub-Gaussian  bounds (Theorem \ref{thm:mass}). In Section \ref{s:app}, we apply Theorem \ref{thm:mass} to show that our solution to \eqref{e:divergence} is $C^\infty$ (Theorem \ref{thm:smooth}) and that the solution can be extended for as long as the quantities \eqref{eq:physical} remain bounded (Theorem \ref{t:extension}). 

\section{Preliminaries}\label{s:prelim}

First, we introduce the following modified Cauchy problem: for $\rho_0, \kappa >0$, let $T_{\rho_0,\kappa} = \rho_0/(2\kappa)$, $\mu(t,v) = e^{-(\rho_0-\kappa t)\vv^2}$ and $g(t,x,v) = \mu(t,v)^{-1} f(t,x,v)$. From \eqref{e:divergence}, the equation for $g$ is
\begin{align}\label{e:g}
\partial_t g + v\cdot \nabla_x g + \kappa \vv^2 g &= \mu^{-1} Q_L(\mu g, \mu g)\nonumber\\
&= \mu^{-1}\tr \left( \bar a[\mu g](\mu D_v^2 g + g D_v^2 \mu + \nabla \mu \otimes \nabla g + \nabla g \otimes \nabla \mu)\right)    + \bar c[\mu g] g\nonumber\\
&= \tr (A[g] D_v^2 g) + B[g]\cdot \nabla_v g + C[g] g,
\end{align}
with
\begin{align}
A_{ij}[g] &= \bar a_{ij}[\mu g],\label{e:AA}\\
B_{j}[g] &=  2 \bar a_{ij}[\mu g] \frac{\partial_{v_i} \mu}{\mu} ,\label{e:BB}\\
C[g] &=  \bar c[\mu g] + \bar a_{ij}[\mu g] \frac{\partial_{v_iv_j}^2\mu}{\mu},\label{e:CC}
\end{align}
where, in general, we sum over repeated indices. Explicitly,
\begin{equation}\label{e:mu}
\begin{split}
\frac{\partial_{v_i} \mu}{\mu} &= -2(\rho_0 - \kappa t) v_i,\\
\frac{\partial_{v_iv_j}^2 \mu}{\mu} &= -2(\rho_0 - \kappa t) \delta_{ij} + 4(\rho_0-\kappa t)^2 v_i v_j.
\end{split}
\end{equation}

The main purpose of this section is to derive the estimates on the coefficients $A$, $B$, and $C$ defined in \eqref{e:AA}--\eqref{e:CC}, as well as $\bar a$ and $\bar c$ defined in \eqref{e:a}--\eqref{e:c}, that will be needed in Section \ref{s:LWP}. 

\begin{lemma}\label{l:coeffs} Let $\gamma \in [-3,0)$, let $p\in [2,\infty]$, let $\alpha$ and $\beta$ be multi-indices, let $g$ be a function on $\R^6$ such that $\partial_x^\alpha  g(x,\cdot) \in W^{|\beta|,p}_v(\R^3)$ for all $x\in \R^3$, and let $\mu(v) = e^{-\lambda |v|^2}$ for some $\lambda>0$.
	\begin{enumerate}
		\item[(a)] For any unit vector $e \in \mathbb S^2$,
		\[	|\partial_x^\alpha\partial_v^\beta (\bar a_{ij}[\mu g]e_i e_j)(x,v)| \lesssim  \|\partial_x^\alpha g(x,\cdot)\|_{W^{|\beta|,p}_v(\R^3)} \vv^{\gamma+2}.\]
		In addition, we have the following improved bounds in the $v$ direction:
		\begin{align*}
		|\partial_x^\alpha\partial_v^\beta (\bar a_{ij}[\mu g]v_i v_j)(x,v)| &\lesssim  \|\partial_x^\alpha g(x,\cdot)\|_{W^{|\beta|,p}_v(\R^3)} \vv^{\gamma+2},\\
		|\partial_x^\alpha\partial_v^\beta(\bar a_{ij}[\mu g]v_j)(x,v)| &\lesssim \|\partial_x^\alpha g(x,\cdot)\|_{W^{|\beta|,p}_v(\R^3)}\vv^{\gamma+2}.
		\end{align*}
		 \item[(b)] For $p=\infty$, one has
		\[	|\partial_x^\alpha\partial_v^\beta\bar c[\mu g](x,v)| \lesssim \| \partial_x^\alpha g(x,\cdot)\|_{W^{|\beta|,\infty}_v(\R^3)} \vv^{\gamma}. \]
		\item[(c)] If  
		$p\geq 2$ and $\theta > 3 + p \gamma$, one has
		\begin{align*}
		\int_{\R^3} \vv^{-\theta}|\partial_x^\alpha \partial_v^\beta \bar c[\mu g](x,v)|^p\dd v &\lesssim \|\partial_x^\alpha g(x,\cdot)\|_{W^{|\beta|,p}_v(\R^3)}^p.
		\end{align*}
	\end{enumerate}
	The implied constants depend only on $\alpha$, $\beta$, $\gamma$, $\lambda$, and, where appropriate, $\theta$ and $p$.
\end{lemma}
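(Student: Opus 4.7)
The plan is to reduce each statement to a kernel estimate for $\bar a[\tilde\mu h]$ or $\bar c[\tilde\mu h]$, where $\tilde\mu$ is a rescaled Gaussian and $h=\partial_x^\alpha\partial_v^{\beta'}g$ for some $\beta'\leq\beta$. Since $\mu$ is independent of $x$, all $\partial_x^\alpha$ derivatives pass under the $w$-integrals in~\eqref{e:a}--\eqref{e:c} directly onto $g$. The $v$-derivatives likewise commute with the convolution, and after Leibniz they distribute onto $\mu$ and $g$. Using the identities~\eqref{e:mu}, each $\partial_v^k\mu$ equals $\mu$ times a polynomial $P_k(v)$, so every term takes the form $\bar a[\mu P(v)\partial_v^{\beta'}g]$ (similarly for $\bar c$); the product $\mu P$ is dominated by a Gaussian $\tilde\mu$ with slightly smaller exponent, and the problem reduces to bounding $\bar a[\tilde\mu h]$ and $\bar c[\tilde\mu h]$.

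For part (a), the bound $|I - \hat w\otimes\hat w|\leq 1$ gives $|\bar a_{ij}[\tilde\mu h]\,e_ie_j(v)| \lesssim \int |w|^{\gamma+2}\tilde\mu(v-w)|h(v-w)|\dd w$. Changing variables to $u = v-w$ and applying H\"older in $u$ with $h(x,\cdot)\in L^p_v$ reduces the claim to $\int |v-u|^{(\gamma+2)p'}\tilde\mu(u)^{p'}\dd u \lesssim \vv^{(\gamma+2)p'}$. I verify this by splitting $\R^3$ into three regions: (i) $|u|\leq |v|/2$, where $|v-u|\approx|v|$ produces the desired $\vv^{\gamma+2}$ factor; (ii) the singular region $|v-u|\leq 1$, which intersects $\supp(\tilde\mu)$ essentially only when $|u|\approx|v|$, so $\tilde\mu(u)$ is Gaussian-small and dominates the integrable factor $|v-u|^{(\gamma+2)p'}$ (locally integrable since $p\geq 2$ and $\gamma+2\geq -1$ yield $(\gamma+2)p'\geq -2>-3$); and (iii) the remaining region, handled directly by the Gaussian decay of $\tilde\mu$. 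The sharper bounds involving $v_iv_j$ and $v_j$ use the algebraic identities
\begin{align*}
(I-\hat w\otimes\hat w)_{ij}v_iv_j &= |u|^2 - (u\cdot\hat w)^2,\\
(I-\hat w\otimes\hat w)_{ij}v_j &= u_i - \hat w_i(u\cdot\hat w),
\end{align*}
both derived from $v = w+u$ together with $w\cdot u = |w|(u\cdot\hat w)$; the resulting extra factors of $|u|$ are absorbed into $\tilde\mu(u)$ and the three-region argument above applies verbatim.

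Part (b) is the same argument with the more singular kernel $|w|^\gamma$ and $p=\infty$, which reduces matters to $\int |w|^\gamma\tilde\mu(v-w)\dd w\lesssim \vv^\gamma$, proved by the identical three-region splitting.

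For part (c), I apply Jensen's inequality with the probability measure $N(v)^{-1}|v-u|^\gamma\tilde\mu(u)\dd u$, where $N(v):=\int |v-u|^\gamma\tilde\mu(u)\dd u\lesssim \vv^\gamma$ by part (b). This yields
\[
|\bar c[\tilde\mu h](v)|^p \lesssim \vv^{\gamma(p-1)}\int |v-u|^\gamma\tilde\mu(u)|h(u)|^p\dd u.
\]
Multiplying by $\vv^{-\theta}$ and using Fubini reduces the estimate to controlling the inner integral $I(u):=\int |v-u|^\gamma\vv^{-\theta+\gamma(p-1)}\dd v$. Splitting $v$ into $\{|v|\geq 2|u|\}$, where $|v-u|\approx\vv$ so the integrand is $\approx\vv^{\gamma p - \theta}$ and integrability at infinity is equivalent to the hypothesis $\theta > 3+p\gamma$, and $\{|v|\leq 2|u|\}$, where $I(u)$ grows at most polynomially in $\langle u\rangle$, yields $I(u)\lesssim 1+\langle u\rangle^N$ for some $N$. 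The polynomial growth is then absorbed by $\tilde\mu(u)$ in the outer $u$-integration. The main technical subtlety is precisely this last step: a direct H\"older estimate would produce the stronger requirement $\theta > 3+(p-1)\gamma$, and recovering the sharp condition $\theta > 3+p\gamma$ relies on trading the polynomial growth of $I(u)$ against the Gaussian decay of $\tilde\mu$ in the outer integration.
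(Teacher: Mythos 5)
Your proposal is correct, and for the ``improved'' bounds in part (a) it takes a genuinely different and cleaner route than the paper. The paper proves the bound on $\bar a_{ij}[\mu g]v_iv_j$ for the base case $|\alpha|+|\beta|=0$ via the $\sin^2\varphi$ computation from \cite{cameron2017landau} with a case split on $z\in B_{|v|/2}(v)$ versus $z\notin B_{|v|/2}(v)$, then derives the bound on $\bar a_{ij}[\mu g]v_j$ by writing $\bar a v$ in an orthonormal frame and invoking the positive-definiteness of $\bar a[\mu g]$ (which requires $\mu g\geq 0$). Because positivity is used, the paper must separately handle $|\alpha|+|\beta|>0$ by the Leibniz expansion \eqref{eq:12154}, isolating the $|\beta'''|=0$ terms and splitting $\partial_v^{\beta'}\mu\,\partial_x^\alpha\partial_v^{\beta''}g = \sqrt\mu(g_+-g_-)$ into signed parts before reapplying the base case. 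Your identities $(I-\hat w\otimes\hat w)_{ij}v_iv_j = |u|^2-(u\cdot\hat w)^2$ and $(I-\hat w\otimes\hat w)_{ij}v_j = u_i-\hat w_i(u\cdot\hat w)$ with $u=v-w$ (algebraically equivalent to the paper's $\sin^2\varphi$ formula, as both equal $(|v|^2|z|^2-(v\cdot z)^2)/|v-z|^2$) instead give \emph{pointwise} bounds $|\cdot|\leq|u|^2$ and $|\cdot|\leq|u|$ on the integrand, so the extra powers of $|u|$ are absorbed into the Gaussian $\tilde\mu(u)$ exactly as the polynomials $P_k(v)$ from $\partial_v^k\mu$ are; no positivity, orthonormal frame, sign decomposition, or $B_{|v|/2}(v)$ case split is needed, and the $|\alpha|+|\beta|>0$ case follows directly from Leibniz. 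For parts (b) and (c), your argument is essentially the paper's: the Jensen step in (c) is exactly the Hölder step in the paper (with exponents $p,p'$ and the kernel $|w|^\gamma\mu(v-w)$ split as $(\cdot)^{1/p}(\cdot)^{1/p'}$), and your estimate of $I(u)=\int|v-u|^\gamma\vv^{-\theta+\gamma(p-1)}\dd v$ by explicit polynomial growth is a minor repackaging of the paper's device $\vv^r\mu(v-w)\lesssim\langle w\rangle^r$ before Fubini; both realize the sharp threshold $\theta>3+p\gamma$ by trading polynomial growth against Gaussian decay.
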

\begin{proof}

	Note that for any $\alpha$ and $\beta$, one has $\partial_x^\alpha\partial_v^\beta \bar a_{ij}[\mu g] = \bar a_{ij}[\partial_x^\alpha \partial_v^\beta (\mu g)]$ and $\partial_x^\alpha\partial_v^\beta \bar c[\mu g] = \bar c[\partial_x^\alpha \partial_v^\beta(\mu g)]$. Also note that
	\begin{equation}\label{e:dv}
	|\partial_x^\alpha \partial_v^\beta (\mu g)| \lesssim \sum_{|\beta'|\leq |\beta|}|\sqrt \mu \partial_x^\alpha \partial_v^{\beta'} g|.
	\end{equation}

	With $p'\in [1,2]$ such that $1/p+1/p'=1$, we have from \eqref{e:a} and \eqref{e:dv} that
	\begin{align*}
	|\partial_x^\alpha\partial_v^\beta \bar a_{ij}[\mu g](v)| 
	&\lesssim \sum_{|\beta'|\leq |\beta|}\int_{\R^3}|w|^{\gamma+2}|\partial_x^\alpha \partial_v^{\beta'} g(v-w)|\sqrt{ \mu(v-w)}\dd w\\
	&\lesssim \| \partial_x^\alpha g\|_{W^{|\beta|,p}_v(\R^3)}\left(\int_{\R^3}  \mu^{p'/2}(v-w)|w|^{p'(\gamma+2)}\dd w\right)^{1/p'}
	\lesssim \|\partial_x^\alpha g\|_{W^{|\beta|,p}_v(\R^3)}\vv^{\gamma+2},
	\end{align*}
	where we use that $|w|^{p'(\gamma+2)}$ is integrable near the origin since $p'(\gamma+2) > -3$.  
	
	Next, we show that the quadratic form $e\cdot(\partial_x^\alpha \partial_v^\beta \bar a e)$ has improved upper bounds in the $v$ direction. First, suppose $|\alpha|+|\beta| = 0$. Following the calculations of \cite{cameron2017landau}, we have for $w = v-z\in \R^3$,
	\begin{align*}
	v\cdot \left(I - \frac{w}{|w|}\otimes \frac{w}{|w|}\right) v\,|w|^{\gamma+2}
	&= |v|^2\left(1 - \left(\frac {(v-z)\cdot e}{|v-z|}\right)^2\right)|v-z|^{\gamma+2}\\
	&= |v|^2\left(|v-z|^2 - \left(|v|-z\cdot e\right)^2\right)|v-z|^{\gamma}\\
	&= |v|^2\left(|z|^2 - (z\cdot e)^2\right)|v-z|^{\gamma}
	= |v|^2|z|^2\sin^2\varphi |v-z|^{\gamma},
	\end{align*}
	where $\varphi$ is the angle between $v$ and $z$. Let $R = |v|/2$. If $z\in B_R(v)$, then $|\sin \varphi| \leq |v-z|/|v|$, and
	\begin{align*}
	\int_{B_R(v)} |v|^2|z|^2\sin^2\varphi& |v-z|^\gamma  \mu g(z)\dd z \leq \int_{B_R(v)} |z|^2 |v-z|^{\gamma+2} \mu g\dd z\\
	&\lesssim \vv^{\gamma+2}\left(\int_{B_R(v)} \mu^{p/2}  g^p\dd z\right)^{1/p}
	\lesssim \|g\|_{L_v^{p}(\R^3)}\vv^{\gamma+2}.
	\end{align*}
	On the other hand, if $z\notin B_R(v)$, i.e. $|v-z| \geq R=|v|/2$, then $|v-z|^\gamma \lesssim |v|^\gamma$, and we have
	\begin{align*}
	\int_{\R^d\setminus B_R(v)} |v|^2|z|^2\sin^2\varphi |v-z|^\gamma  \mu g(z) \dd z &\lesssim  \|g\|_{L_v^p(\R^3)} \vv^{\gamma+2}.
	\end{align*}
	Still under the assumption that $|\alpha|+|\beta|= 0$, we now show the improved bound on $\bar a_{ij} v_i$. Let $\{v/|v|, e_2, e_3\}$ be an orthonormal basis for $\R^3$ and write 
	\[\bar a[ \mu  g] v = \beta_1 \frac v{|v|} + \beta_2 e_2 + \beta_3 e_3.\] 
	The above bound on $v\cdot (\bar a v)$ implies $\beta_1 = (v/|v|)\cdot (\bar a[\mu g] v)\lesssim \| g\|_{L^p_v(\R^3)}\vv^{\gamma+1}$. Since $\bar a_{ij}[\mu  g]$ is positive-definite, we have 
	\[\beta_2 = e_2\cdot(\bar a [\mu g] v) \leq \frac 1 2 v\cdot (\bar a [\mu  g] v) + \frac 1 2 e_2 \cdot (\bar a [\mu g] e_2) \lesssim \| g\|_{L^p_v(\R^3)}\vv^{\gamma+2}.\] 
	Similarly, $\beta_3 \lesssim \| g\|_{L^p_v(\R^3)}\vv^{\gamma+2}$. We conclude $\bar a_{ij} v_i \lesssim \|g\|_{L_v^p(\R^3)}\vv^{\gamma+2}$.

For $|\alpha|+|\beta| >0$, we write
\begin{equation}\label{eq:12154}
	\partial_x^\alpha \partial_v^\beta (\bar a [\mu g] v)
		= \sum_{\beta' + \beta'' + \beta''' = \beta}C_{\beta',\beta'', \beta'''} \bar a [\partial_v^{\beta'}\mu \partial_v^{\beta''}\partial_x^\alpha g] \partial_v^{\beta'''}v
\end{equation}
If $|\beta'''| = 1$, then letting $i$ be the unique index such that $\beta'''_i = 1$, we see that
\begin{equation}\label{eq:12153}
	|\bar a [\partial_v^{\beta'}\mu \partial_v^{\beta''}\partial_x^\alpha g] \partial_v^{\beta'''}v|
		= |\bar a [\partial_v^{\beta'}\mu \partial_v^{\beta''}\partial_x^\alpha g] e_i|
		\lesssim \|\partial_x^\alpha g\|_{W^{|\beta''|,p}_v}\vv^{\gamma+2}
		\leq \|\partial_x^\alpha g\|_{W^{|\beta|,p}_v}\vv^{\gamma+2},
\end{equation}
where the second-to-last inequality follows exactly as above.  If $|\beta'''| = 0$, then we write $\partial_v^{\beta'} \mu \partial_x^\alpha \partial_v^{\beta''}g = \sqrt \mu (g_+ - g_-)$, where $g_+$ and $g_-$ are both non-negative. Then
\[
	\bar a [\partial_v^{\beta'}\mu \partial_v^{\beta''}\partial_x^\alpha g] \partial_v^{\beta'''}v
		= \bar a[\sqrt \mu g_+]v + \bar[\sqrt \mu g_-]v.
\]
Since $g_+$ and $g_-$ are both non-negative, we apply our work from the case $|\alpha|+|\beta|=0$ to see that 
\begin{equation}\label{eq:12152}
	|\bar a [\partial_v^{\beta'}\mu \partial_v^{\beta''}\partial_x^\alpha g] \partial_v^{\beta'''}v|
		\leq |\bar a[\sqrt \mu g_+]v| + |\bar a[\sqrt \mu g_-]v|
		\lesssim \|g_+\|_{L^p_v(\R^3)}\vv^{\gamma+2} + \|g_-\|_{L^p_v(\R^3)}\vv^{\gamma+2}.
\end{equation}
Since $|\partial_v^{\beta'} \mu| \lesssim \sqrt \mu$, it is clear that
\begin{equation}\label{eq:12151}
	\|g_+\|_{L^p_v(\R^3)}^p + \|g_-\|_{L^p_v(\R^3)}^p
		= \|\mu^{-1/2} \partial_v^{\beta'}\mu \partial_x^\alpha \partial_v^{\beta''}g\|_{L^p_v(\R^3)}^p
		\lesssim \|\partial_x^\alpha g\|_{W^{|\beta''|,p}_v(\R^3)}
		\leq \|\partial_x^\alpha g\|_{W^{|\beta|,p}_v(\R^3)}.
\end{equation}
Combining~\eqref{eq:12154},~\eqref{eq:12153},~\eqref{eq:12152}, and~\eqref{eq:12151}, we obtain the desired estimate:
\[
	|\partial_x^\alpha \partial_v^\beta (\bar a[\mu g] v)|
		\lesssim \|\partial_x^\alpha g\|_{W^{|\beta|,p}_v(\R^3)}\vv^{\gamma+2}.
\]
Proceeding in a similar manner and using our bound on $\partial_x^\alpha\partial_v^\beta (\bar a[\mu g] v)$, we can show 
\[
	|\partial_x^\alpha \partial_v^\beta (v\cdot \bar a[\mu g] v)|
		\lesssim \|\partial_x^\alpha g\|_{W^{|\beta|,p}_v(\R^3)}\vv^{\gamma+2}.
\]
This establishes (a).

	
For (b), if $\gamma \in (-3,0)$, \eqref{e:c} implies
	\begin{align*}
	|\partial_x^\alpha\partial_v^\beta \bar c[\mu g](v)| &\lesssim \sum_{|\beta'|\leq |\beta|}\int_{\R^3}|w|^{\gamma}|\partial_x^\alpha \partial_v^{\beta'} g(v-w)|\sqrt \mu(v-w)\dd w\\
	&\lesssim \| \partial_x^\alpha g\|_{W^{|\beta|,\infty}_v(\R^3)}\left(\int_{\R^3} \sqrt \mu(v-w)|w|^{\gamma}\dd w\right)
	\lesssim \| \partial_x^\alpha g\|_{W^{|\beta|,\infty}_v(\R^3)}\vv^{\gamma},
	\end{align*}
	since $\gamma  > -3$. If $\gamma = -3$, then $\bar c[\mu g] = c_{3} \mu g$, and an even stronger bound is satisfied.
	
	To prove (c), in the case $\gamma = -3$, the desired estimate is an immediate consequence of the formula $\bar c[\mu g] = c_3\mu g$. Letting $\gamma \in (-3,0)$, we restrict to the case $|\alpha|+|\beta|=0$ for brevity; the remaining cases follow easily from \eqref{e:dv}.  Using H\"older's inequality,
	\begin{align*}
	| \bar c[\mu g]|^p &\lesssim \int_{\R^3} g(v-w)^p |w|^{\gamma} \mu(v-w) \dd w \left( \int_{\R^3}  \mu(v-w) |w|^{\gamma}\dd w\right)^{p/p'}\\
	&\lesssim  \int_{\R^3} \vv^{p\gamma/p'} |w|^{\gamma}|g(v-w)|^p \mu(v-w) \dd w,
	\end{align*}
	where $p'$ is the dual exponent to $p$. Note that $p/p' = p-1$. This implies that 
	\begin{align*}
	\int_{\R^3} \vv^{-\theta}& | \bar c[\mu g]|^p\dd v
	\lesssim \int_{\R^3}\int_{\R^3} \vv^{-\theta + \gamma(p-1)} |w|^{\gamma} |g(v-w)|^p \mu(v-w)\dd w\dd v\\
	&\lesssim \int_{\R^3} \int_{\R^3} \langle w\rangle^{- \theta + \gamma(p-1)} |w|^{\gamma}|g(v-w)|^p \dd w \dd v
	\lesssim \|g\|_{L^p_v}^p \int_{\R^3} \langle w\rangle^{- \theta + \gamma(p-1)} |w|^{\gamma} \dd w,
	\end{align*}
	by Fubini's Theorem and the estimate $\|\vv^r \mu(v-w)\|_{L^\infty_v(\R^3)}\lesssim \langle w\rangle^r$ for any $r\in\R$. The last integral is finite because, by assumption, $-\theta + \gamma p < -3$. 	
\end{proof}

\begin{lemma}\label{l:abc} Let $\gamma \in [-3,0)$, $\mu = e^{-\lambda\vv^2}$ for some $\lambda>0$, and let $g$, $\alpha$, and $\beta$ be such that $\partial_x^\alpha g(x,\cdot)\in H^{|\beta|}_v$ for $x\in \R^3$. Let $A[g]$, $B[g]$, and $C[g]$ be defined by \eqref{e:AA}--\eqref{e:CC}. Then we have
		\begin{align*}
		|\partial_x^\alpha \partial_v^\beta A[g](x,v)| &\lesssim \|\partial_x^\alpha g(x,\cdot)\|_{H^{|\beta|}_v} \vv^{\gamma+2},\\
		|\partial_x^\alpha \partial_v^\beta B[g](x,v)| &\lesssim \|\partial_x^\alpha g(x,\cdot)\|_{H^{|\beta|}_v} \vv^{\gamma+2}.
		\end{align*}
		If $\partial_x^\alpha g(x,\cdot) \in W^{|\beta|,\infty}_v$ for $x\in \R^3$, then
		\begin{align*}
		|\partial_x^\alpha \partial_v^\beta C[g](x,v)| &\lesssim \|\partial_x^\alpha g(x,\cdot)\|_{W^{|\beta|,\infty}_v} \vv^{\gamma+2}.
		\end{align*}
	The implied constants depend only on $\alpha$, $\beta$, $\gamma$, and $\lambda$.
\end{lemma}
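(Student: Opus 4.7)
The plan is to expand $A[g]$, $B[g]$, and $C[g]$ via \eqref{e:AA}--\eqref{e:CC} together with the explicit formulas \eqref{e:mu}, and reduce every bound directly to Lemma \ref{l:coeffs}. Throughout I take $p=2$ (so that $W^{|\beta|,p}_v = H^{|\beta|}_v$), except in the one application of Lemma \ref{l:coeffs}(b) below, which requires $p=\infty$.

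For the matrix entries $A_{ij}[g] = \bar a_{ij}[\mu g]$, Lemma \ref{l:coeffs}(a) already controls the scalar quadratic form $\partial_x^\alpha\partial_v^\beta(\bar a_{ij}[\mu g] e_i e_j)$ for an arbitrary unit vector $e$. Since $\bar a[\mu g]$ is a symmetric bilinear form, the polarization identity
\[
\bar a_{ij}[\mu g] \;=\; \bar a[\mu g]\,\tilde e_+\!\cdot\!\tilde e_+ \;-\; \bar a[\mu g]\,\tilde e_-\!\cdot\!\tilde e_-,\qquad \tilde e_\pm = (e_i\pm e_j)/\sqrt{2},
\]
writes each matrix entry as a difference of two such scalar forms, yielding the bound on $\partial_x^\alpha\partial_v^\beta A_{ij}[g]$. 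For $B_j[g]$, the identity $\partial_{v_i}\mu/\mu = -2\lambda v_i$ from \eqref{e:mu} reduces it to $-4\lambda\,\bar a_{ij}[\mu g]\,v_i$, so the ``improved'' estimate $|\partial_x^\alpha\partial_v^\beta(\bar a_{ij}[\mu g]v_j)|\lesssim \|\partial_x^\alpha g\|_{H^{|\beta|}_v}\vv^{\gamma+2}$ of Lemma \ref{l:coeffs}(a) is exactly what is required.

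For $C[g]$, I use $\partial^2_{v_iv_j}\mu/\mu = -2\lambda\delta_{ij} + 4\lambda^2 v_i v_j$ from \eqref{e:mu} to write
\[
C[g] \;=\; \bar c[\mu g] \;-\; 2\lambda\,\tr(\bar a[\mu g]) \;+\; 4\lambda^2\,\bar a_{ij}[\mu g]\,v_i v_j.
\]
The first term is handled by Lemma \ref{l:coeffs}(b), producing growth $\vv^\gamma \leq \vv^{\gamma+2}$. The second, $\tr(\bar a[\mu g]) = \sum_k \bar a[\mu g]e^{(k)}\!\cdot\!e^{(k)}$, is controlled by summing Lemma \ref{l:coeffs}(a) along the standard basis. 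The third is the improved bound $|\partial_x^\alpha\partial_v^\beta(\bar a_{ij}[\mu g]v_i v_j)|\lesssim \|\partial_x^\alpha g\|_{H^{|\beta|}_v}\vv^{\gamma+2}$ from Lemma \ref{l:coeffs}(a). The $W^{|\beta|,\infty}_v$-norm on the right-hand side is inherited from the $\bar c$ contribution.

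There is no substantive obstacle: this lemma is essentially a bookkeeping consequence of Lemma \ref{l:coeffs}. The one point worth checking is that Leibniz-expanding $\bar a_{ij}[\mu g]\,v_i$ and $\bar a_{ij}[\mu g]\,v_i v_j$ produces only terms of two types: those retaining the improved-bound structure of Lemma \ref{l:coeffs}(a) (when all $v$-derivatives fall on $\bar a_{ij}[\mu g]$), and those of the form $\partial_v^{\beta'}\bar a_{ij}[\mu g]$ with $|\beta'|<|\beta|$ (when some derivatives fall on the polynomial factor). Both cases have already been handled, the latter by the matrix-entry bound on $A$ established via polarization.
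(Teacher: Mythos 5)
Your approach is essentially the paper's (which simply invokes Lemma~\ref{l:coeffs}), spelled out in more detail, and the polarization step from quadratic forms to matrix entries is a reasonable elaboration of what the paper leaves implicit. There is, however, one small but real error in the bookkeeping for $C[g]$: you declare $p=2$ for every application of Lemma~\ref{l:coeffs}(a), reserving $p=\infty$ only for part (b). That gives $\|\partial_x^\alpha g\|_{H^{|\beta|}_v}$ for the $\tr(\bar a[\mu g])$ and $\bar a_{ij}[\mu g]\,v_i v_j$ contributions, whereas the desired bound is in terms of $\|\partial_x^\alpha g\|_{W^{|\beta|,\infty}_v}$. Since there is no embedding $W^{|\beta|,\infty}_v(\R^3)\hookrightarrow H^{|\beta|}_v(\R^3)$ (a constant is in the former but not the latter) and the lemma's hypothesis for the $C[g]$ bound only assumes $\partial_x^\alpha g(x,\cdot)\in W^{|\beta|,\infty}_v$, your proof as written does not yield the stated estimate; the $W^{|\beta|,\infty}_v$ norm cannot be ``inherited from the $\bar c$ contribution'' alone. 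The fix is simply to apply Lemma~\ref{l:coeffs}(a) with $p=\infty$ for all the $\bar a$-terms appearing in $C[g]$, exactly as the paper does. Two cosmetic points: polarization gives $\bar a_{ij}=\tfrac12\left(\bar a\,\tilde e_+\!\cdot\!\tilde e_+ - \bar a\,\tilde e_-\!\cdot\!\tilde e_-\right)$ (you dropped the $\tfrac12$), and for $i=j$ one should use $\bar a_{ii}=\bar a\,e_i\cdot e_i$ directly since $\tilde e_-$ vanishes; and the closing paragraph about Leibniz-expanding $\bar a_{ij}v_i$ and $\bar a_{ij}v_iv_j$ is unnecessary, since the ``improved'' estimates in Lemma~\ref{l:coeffs}(a) already bound $\partial_x^\alpha\partial_v^\beta$ of those full products, not just the case where all derivatives land on $\bar a$.
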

\begin{proof}
	The bounds on $A[g]$ and $B[g]$ follow immediately from Lemma \ref{l:coeffs}(a) with $p=2$, and the bound on $C[g]$ follows from Lemma \ref{l:coeffs}(a) and (b) with $p=\infty$.	
\end{proof}

\section{Local existence}\label{s:LWP}

In this section, we solve \eqref{e:g} on a time interval $[0,T]$. To do this, we first consider a linearization of \eqref{e:g} with added viscosity on a bounded domain.  Let us introduce the following notation: for any $\eps>0$ and $R>3$, define the mollifier $\zeta_\eps(x,v) = \eps^{-6} \zeta(x/\eps, v/\eps)$ for some non-negative, $C_c^\infty$ function $\zeta$ such that $\int \zeta \dd x \dd v = 1$.  Next, let $\Omega_R = \{(x,v) \in \R^6: |x|^2 + |v|^2 < R^2\}$ be a ball in $\R^6$ centered at the origin.  
Finally, let $\chi_R$ be a smooth cutoff function on $\R^6$, supported in $\Omega_{R-1}$, equal to $1$ in $\Omega_{R-2}$, radially symmetric, 
monotone, and such that $|D_{x,v}^n \chi_R| \leq 2^n$ for any $n\in \N\cup\{0\}$. 

\begin{lemma}\label{l:parabolic}
 Let $g_{in} \in H^k_{\ul}$ and $h \in L^\infty([0,T], H^k_{\ul})$ be given nonnegative
functions with $T>0$.  
For any $\eps>0$, let $h_\eps = \zeta_\eps * h$. 
Then, for all $R$ sufficiently large, there exists a unique solution $G = G_{h,R,\eps}$ to
\begin{equation}\label{e:glin}
\partial_t G = \eps \Delta_{x,v} G + \textup{tr} \left( A[h_\eps] D_v^2 G \right) + B[h_\eps] \cdot \nabla_v G
- v \cdot \nabla_x G + \left( C[h_\eps] - \kappa \vv^2 \right) G
\end{equation}
on $[0,T] \times \Omega_R$ with initial data and boundary values
\begin{equation}\label{e:ginit}
G(0,x,v) = \chi_R(x,v) \left( \zeta_\eps * g_{in} \right)(x,v) \ \text{ and } \
G(t,y,w) = 0 \ \text{ for all } (t,y,w) \in [0,\infty) \times \partial \Omega_R. 
\end{equation}
The solution $G$ is nonnegative and $G\in C^\infty([0,T]\times\Omega_R)$.
\end{lemma}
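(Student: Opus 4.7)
The plan is to treat \eqref{e:glin}--\eqref{e:ginit} as a linear uniformly parabolic Cauchy--Dirichlet problem with smooth coefficients on the smooth bounded cylinder $[0,T]\times\Omega_R$, and assemble standard facts about such problems.

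First, I would record the structural hypotheses. Since $h_\eps=\zeta_\eps * h$ is a mollification of a nonnegative $L^\infty([0,T], H^k_{\ul})$ function, $h_\eps$ is nonnegative and smooth in $(x,v)$ with all $(x,v)$-derivatives uniformly bounded on $[0,T]\times\Omega_R$. By \Cref{l:abc}, all $(x,v)$-derivatives of $A[h_\eps]$, $B[h_\eps]$, and $C[h_\eps]$ are then bounded on $[0,T]\times\Omega_R$. The principal symbol of the second-order part is
\[\eps\, I_6 + \begin{pmatrix} 0 & 0 \\ 0 & A[h_\eps] \end{pmatrix},\]
whose eigenvalues are at least $\eps>0$ (using $A[h_\eps]\succeq 0$, which is immediate from \eqref{e:a} and $h_\eps\geq 0$); thus \eqref{e:glin} is uniformly parabolic with ellipticity constant $\eps$. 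The initial datum $\chi_R(\zeta_\eps * g_{in})$ is smooth and, since $\chi_R$ is supported in $\Omega_{R-1}$, vanishes identically in a neighborhood of $\partial\Omega_R$, so it matches the zero boundary datum to all orders at the parabolic corner $\{0\}\times\partial\Omega_R$.

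With these facts in hand, existence and uniqueness of a weak solution $G\in L^2([0,T]; H^1_0(\Omega_R))\cap C^0([0,T]; L^2(\Omega_R))$ follow from the standard Galerkin/Hilbert-space theory for linear parabolic equations. Interior smoothness, $G\in C^\infty([0,T]\times\Omega_R)$, then follows from a bootstrap of interior parabolic Schauder estimates applied to finite-difference quotients and then to derivatives of $G$, using smoothness of the coefficients and of the initial datum. Nonnegativity comes from the weak minimum principle: set $\widetilde G = e^{-\lambda t}G$ for $\lambda$ large enough that $C[h_\eps]-\kappa\vv^2-\lambda\leq 0$ on the bounded set $[0,T]\times\overline{\Omega_R}$; the equation for $\widetilde G$ then has non-positive zeroth-order coefficient, and the minimum principle applied to $\widetilde G$ with nonnegative initial datum and zero boundary datum yields $\widetilde G\geq 0$, hence $G\geq 0$.

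The arguments above are all classical, so the only real subtlety is the $t$-regularity of the coefficients. Since $h$ lies only in $L^\infty$ in $t$, to quote results requiring smooth coefficients I would first also mollify $h$ in time, solve the resulting problem as above, and then remove the time-mollification via the uniform estimates developed in the next lemma (\Cref{l:gre}); with $h$ smooth in $(t,x,v)$ the argument goes through verbatim. The main potential obstacle is therefore just bookkeeping around the time regularity, which is handled by this extra layer of approximation.
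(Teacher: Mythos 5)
The paper deliberately omits the proof of this lemma, stating only that it ``follows from standard parabolic theory'' and that nonnegativity ``is implied by the maximum principle.'' Your write-up is precisely the standard argument the authors have in mind, and you assemble the relevant pieces correctly: uniform parabolicity with ellipticity constant $\eps$ on the bounded cylinder, boundedness of $A[h_\eps]$, $B[h_\eps]$, $C[h_\eps]$ and all their $(x,v)$-derivatives via \Cref{l:abc}, corner compatibility because $\chi_R$ kills the initial datum near $\partial\Omega_R$, Galerkin existence/uniqueness in the energy class, bootstrapped interior Schauder for smoothness, and the minimum principle after the exponential tilt $e^{-\lambda t}$ to absorb the unsigned zeroth-order coefficient. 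So the proposal is correct and in the same spirit as the paper's intended (but omitted) argument.

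You also put your finger on the one genuinely non-routine point, namely the time regularity: $h\in L^\infty([0,T],H^k_{\ul})$ and $\zeta_\eps$ mollifies only in $(x,v)$, so the coefficients are a priori only $L^\infty$ in $t$, which is in tension with the stated conclusion $G\in C^\infty([0,T]\times\Omega_R)$. Your proposed fix (mollify additionally in time, solve, then pass to the limit via the a priori estimates of \Cref{l:gre}) is sound for producing existence, uniqueness, nonnegativity, and arbitrary $(x,v)$-smoothness, but the limit will not literally be $C^\infty$ in $t$; it will only be $W^{1,\infty}_t$ in high spatial Sobolev norms. This is a harmless imprecision in the lemma as stated, because everything that is used downstream --- the energy estimates of \Cref{l:gre}, the integrations by parts in $x$ and $v$, and the cutoff arguments --- requires only $L^\infty$-in-$t$ coefficients and $(x,v)$-smoothness of $G$, not $t$-smoothness. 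So you could either weaken the smoothness claim to $C^\infty$ in $(x,v)$ uniformly in $t$ together with $W^{1,\infty}_t$ control, or keep the time-mollification layer as you suggest but record that the $C^\infty$-in-$t$ property is not actually invoked in the sequel; either route closes the gap cleanly.
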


Lemma \ref{l:parabolic} follows from standard parabolic theory. In particular, the nonnegativity of $G$ is
implied by the maximum principle. As such, we omit the proof. 

Our next step is to solve
the linearized problem \eqref{e:glin} on the whole space and with $\eps = 0$. We do this by
looking at the solutions $G_{h,R,\eps}$ of \eqref{e:glin} above and extracting a weak limit as $R$ tends to $\infty$ and $\eps$ tends to zero.

\begin{lemma}\label{l:gre}
Let $T \in (0,T_{\rho_0,\kappa}]$, let $h \in  L^\infty([0,T], H^k_{\ul})$ and $g_{in} \in H^k_{\ul}$ be given nonnegative functions.
Then there exists a solution $G \in Y^k_{T}$ to the linearized problem
\begin{equation}\label{e:true-glin}
\partial_t G + v \cdot \nabla_x G + \kappa \vv^2 G =
\textup{tr} \left( A[h] D_v^2 G \right) + B[h] \cdot \nabla_v G + C[h] G
\end{equation}
with $G(0,x,v) = g_{in}(x,v)$. Moreover, $G$ is nonnegative and
\begin{equation}\label{e:main-bound}
\| G \|_{Y^k_{T}}^2 \leq \| g_{in} \|^2_{H^k_{\ul}}
\exp \left( C_1 T \left( 1 + \| h \|_{L^\infty([0,T], H^k_{\ul})}^\Lambda \right) \right)
\end{equation}
for $\Lambda= \max\{8,2/|\gamma| +1\}$ and some $C_1>0$ depending on $k$, $\gamma$, $\rho_0$, and $\kappa$.
\end{lemma}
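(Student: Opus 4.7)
The plan is to take the family $\{G_{h,R,\eps}\}$ of approximate solutions supplied by Lemma~\ref{l:parabolic}, establish an energy estimate of the form~\eqref{e:main-bound} uniformly in $R$ and $\eps$, and then extract a weakly convergent subsequence whose limit $G$ solves~\eqref{e:true-glin}. The nonnegativity of the limit is immediate from $G_{h,R,\eps}\geq 0$; the initial condition $G|_{t=0}=g_{in}$ follows from the strong convergence $\chi_R(\zeta_\eps*g_{in})\to g_{in}$ in $H^k_{\ul}$; and the coefficients $A[h_\eps],B[h_\eps],C[h_\eps]$ converge to $A[h],B[h],C[h]$ in a sense strong enough to pass to the limit distributionally, because the maps $h\mapsto A[h],B[h],C[h]$ are linear and bounded by Lemma~\ref{l:abc}. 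The whole content of the lemma is therefore the uniform energy estimate, which I now sketch.

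\textbf{Energy estimate.} Fix $a\in\R^3$ and a multi-index $(\alpha,\beta)$ with $|\alpha|+|\beta|\leq k$. Apply $\partial_x^\alpha\partial_v^\beta$ to~\eqref{e:glin}, multiply by $\phi(x-a)^2\,\partial_x^\alpha\partial_v^\beta G$, and integrate over $\R^6$. The $\eps$-viscosity and the principal term $\mathrm{tr}(A[h_\eps]D_v^2G)$ yield, after integration by parts in $x$ and $v$, nonpositive dissipation that I discard; the commutators they generate place at least one derivative on $A[h_\eps]$ and, by Lemma~\ref{l:abc}, are bounded by $\|h\|_{H^k_{\ul}}\vv^{\gamma+2}$ times products of derivatives of $G$ of order $\leq k$. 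The drift and zeroth-order terms $B[h_\eps]\cdot\nabla_vG+C[h_\eps]G$ contribute the same kind of expression, the transport commutator $[\partial_v^\beta,v\cdot\nabla_x]G$ stays at order $\leq k$, and $[\partial_v^\beta,\kappa\vv^2]G$ produces lower-order weighted terms. The decisive positive input comes from the term $-\kappa\vv^2G$ in~\eqref{e:glin}, which contributes
\[
\kappa\int_0^T\int_{\R^6}\phi(x-a)^2\vv^2\,|\partial_x^\alpha\partial_v^\beta G|^2\dd x\dd v\dd t
\]
to the left-hand side, giving, after summation and supremization in $a$, precisely the $L^2([0,T],H^{k,1}_{\ul})$ piece of $\|G\|_{Y^k_T}^2$.

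\textbf{Closing the estimate.} The main obstacle is absorbing the weight $\vv^{\gamma+2}$ appearing in every coefficient bound into the good weight $\vv^2$. Since $\gamma+2<2$, one writes $\vv^{\gamma+2}=(\vv^2)^{(\gamma+2)/2}$ and applies Young's inequality with exponents $(2/(\gamma+2),2/|\gamma|)$ to conclude, schematically,
\[
\|h\|_{H^k_{\ul}}\vv^{\gamma+2}\,|\partial G|^2\;\leq\;\tfrac{\kappa}{2}\,\vv^2\,|\partial G|^2+C\,\|h\|_{H^k_{\ul}}^{2/|\gamma|+1}|\partial G|^2,
\]
with a similar step applied to the higher-order commutator products, where two factors of $h$ combine and are responsible for the floor $8$ appearing in the definition of $\Lambda$. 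Summing over $(\alpha,\beta)$, taking the supremum in $a$, and applying Gr\"onwall's inequality closes~\eqref{e:main-bound} uniformly in $R$ and $\eps$. Weak-$*$ compactness in $Y^k_T$ then produces a limit $G$ satisfying~\eqref{e:main-bound}; the linearity of $A,B,C$ in $h$ and the strong convergence $h_\eps\to h$ in $L^\infty([0,T],H^k_{\ul})$ let one pass to the limit in the distributional formulation of~\eqref{e:glin}. The main technical subtlety throughout is the bookkeeping of the uniformly local cutoffs $\phi(x-a)$ and the spatial cutoff $\chi_R$, which is routine provided $R\to\infty$ is taken before $\eps\to 0$.
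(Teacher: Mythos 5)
Your high-level plan---uniform energy estimate on the approximations from Lemma~\ref{l:parabolic}, then weak compactness---is the same as the paper's, and the absorption of the weight $\vv^{\gamma+2}$ by the good term $\kappa\vv^2$ via Young's inequality is correctly identified as the mechanism producing a power of $\|h\|_{H^k_{\ul}}$. However, there are several genuine gaps in the middle of the argument.

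First, you multiply $\partial_x^\alpha\partial_v^\beta G$ by $\phi(x-a)^2\,\partial_x^\alpha\partial_v^\beta G$ and integrate by parts ``in $x$ and $v$.'' But $G_{h,R,\eps}$ lives on the bounded domain $\Omega_R$, and while $G_{h,R,\eps}$ vanishes on $\partial\Omega_R$, its derivatives $\partial_x^\alpha\partial_v^\beta G_{h,R,\eps}$ with $|\alpha|+|\beta|\geq 1$ do not. The cutoff $\phi(x-a)$ is compactly supported in $x$ alone, so it does nothing to keep the support away from the $v$-boundary of $\Omega_R$ (a ball in $\R^6$). Integration by parts therefore produces uncontrolled boundary terms. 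The paper avoids this by inserting velocity cutoffs $\psi_{R/2^m}$ in the $m$-th order norms (the $\dot H^{m,l}_{\ul,r}$ machinery), explicitly localized so that $\supp(\phi(\cdot-a)\psi_{R/2^m})$ avoids $\partial\Omega_R$. Your proposal omits this entirely.

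Second, once the $\psi_m$ cutoffs are present, derivatives of them generate ``annular'' commutator terms supported where $|v|\approx R/2^{m-1}$, of the type $C_1 R^\gamma X \|G_R\|^2_{\dot H^{m,0}_{\ul,R/2^{m-1}}}$. These involve a \emph{larger} $v$-support than the quantity being estimated, so a single-step Gr\"onwall argument does not close. The paper handles this by an induction on $m$: the $\eps$-viscosity dissipation at order $m-1$ (which you propose to discard, but the paper explicitly states ``We will need it to close the estimates'') bounds $\eps\int_0^T\|G_R\|^2_{\dot H^{m,0}_{\ul,R/2^{m-1}}}\dd t$, and since $R^\gamma\to 0$, the error at level $m$ can be made $\leq\eps$ by choosing $R$ large enough relative to $\eps$ (condition~\eqref{e:R-eps-condition}). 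Discarding the $\eps$-viscosity term and omitting the induction means the estimate simply does not close: the bound on $\|G_R\|_{Y^k_{T,R}}$ you would obtain is not uniform in $R,\eps$.

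Third, your explanation of the floor $8$ in $\Lambda$ (``two factors of $h$ combine'') is not where it comes from; $h$ enters the coefficients linearly and appears only once in each term. The $8$ arises from the weighted-Sobolev treatment of the term $\bar c[\mu h_\eps]G_R$ (the $J_{3,1}$ piece), where Lemma~\ref{l:coeffs}(c) with weight $\vv^{-7/2}$ leads to a Young's-inequality split $\eta\,\|\cdot\|_{\dot H^{m,1}} + \eta^{-7}\,Z_{m,R}$; choosing $\eta\sim X^{-1/2}$ and squaring gives $X^8$. Finally, the claimed strong convergence $\chi_R(\zeta_\eps*g_{in})\to g_{in}$ in $H^k_{\ul}$ is too optimistic: mollification does not converge strongly in uniformly local spaces (compactly supported smooth functions are not dense there), and only a local or weak convergence is available, which is what the paper actually uses.
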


\begin{proof}
Before beginning, we set some notation and make some useful observations.  Let $\psi$ be a fixed smooth cut-off
function in the velocity variable; that is, $\psi$ is radial, nonnegative, identically $1$ for $|v|\leq 1$,
vanishes for $|v| \geq 11/10$, and monotonic. For $0<r<R$, define $\psi_r(v) = \psi(v/r)$.
We then define
\begin{equation}\label{e:locnorm}
\begin{split}
&\| g \|_{{\dot H}^{k,l}_{\ul,r}(\Omega_R)} = \sum_{|\alpha|+|\beta|=k}
\sup_{a \in B_{R/10}} \int_{\Omega_R} |\phi(x-a)\psi_r(v) \vv^l
\partial_x^\alpha\partial_v^\beta g(x,v)|^2 \dd x \dd v, \\
 &\| g \|_{\dot{Y}^k_{T,r,R}} = \| g \|_{L^\infty([0,T], \dot{H}^{k,0}_{\ul,r}(\Omega_R))} +
\| g \|_{L^2([0,T], \dot{H}^{k,1}_{\ul,r}(\Omega_R))}, ~ \text{ and } 
~ \| g \|_{Y^k_{T,R}} = \sum_{m=0}^k \| g \|_{\dot{Y}^m_{T,R/2^m,R}}.
\end{split}
\end{equation}
We note that the higher-derivative norms in $\| \cdot \|_{Y^k_{T,R}}$ are more strongly localized.
In particular, notice that $Y^0_{T,R}$ coincides with $L^\infty([0,T], L^2_{\ul}(\Omega_R))
\cap L^2([0,T], H^{0,1}_{\ul}(\Omega_R))$ since $\psi_R \equiv 1$ on $\Omega_R$.  We also mention that the support of $\psi$ and the admissible $a$ in the supremum are chosen so that, when $m \geq 1$, $\supp(\phi(\cdot-a)\psi_{R/2^m})$ avoids the boundary of $\Omega_R$ when $R \geq 40 \sqrt 3 / 7$.

We write $\phi = \phi(x-a)$, $\phi_2 = \phi((x-a)/2)$, and $\psi = \psi(v)$. We frequently use the following facts: 
\begin{enumerate}
\item[(i)] $\phi = \phi \phi_2$;
\item[(ii)] $|\nabla_x \phi| \lesssim \phi_2$;
\item[(iii)] for any $s, \eta > 0$, $\vv^{2-s} \leq \eta \vv^2 + C_s \eta^{-\max\{2/s - 1,0\}}$ for $C_s>0$ depending only on $s$;
\item[(iv)] replacing $\phi$ with $\phi_2$ in the definition \eqref{e:ul-def} of $\|\cdot\|_{H^k_{\ul}}$ defines an equivalent norm.
\end{enumerate}
  Also, recall that if $\partial_x^\alpha \partial_v^\beta f \in L^p$ for some multi-indices $\alpha$ and $\beta$, then $\|\partial_x^\alpha \partial_v^\beta (\zeta_\eps * f) \|_{L^p} \leq \|\partial_x^\alpha \partial_v^\beta f \|_{L^p}$. This implies $A[h_\eps]$, $B[h_\eps]$, and $C[h_\eps]$ satisfy the same bounds as $A[h]$, $B[h]$, and $C[h]$ (cf.~Lemma \ref{l:abc}), with constants independent of $\eps$.

Throughout the proof, we take $N$ to be a fixed positive integer that will eventually be chosen large enough (independently of $R$ or $\eps$) that our inequalities
close correctly. Also, we denote by $C_1$ a running constant that is independent of $R$ and $\eps$
(but may depend on $N$).  Finally, we denote by $\tilde C$ a running constant independent of $R$, $\eps$, and $N$.

For given $R$ and $\eps$, let $G_R = G_{h,R,\eps}$ be the solution to \eqref{e:glin} with boundary conditions \eqref{e:ginit} on
$\Omega_R$, guaranteed by Lemma \ref{l:parabolic}. We will establish a bound on $G_R$ in $Y^k_{T,R}$
(uniform in $R$ and $\eps$) that will then allow us to take a limit as $R$ tends to $\infty$ and $\eps$
tends to zero.

For ease of notation, we define
\begin{equation}\label{def:XY}
\begin{split}
 &X = X(t):= \|h \|_{L^\infty([0,t],H^k_{\ul})}, \qquad
 Y_{m,R} = Y_{m,R}(t):= \| G_R \|_{L^\infty([0,t], \dot{H}^{m,0}_{\ul,R/2^m}(\Omega_R))},\\
 &\text{ and }\qquad
 Z_{k,R} = Z_{k,R}(t):= \left( \sum_{m=0}^k Y_{m,R}^2 \right)^{\frac 1 2}. \\
\end{split}
\end{equation}

We now begin the mechanics of the proof.  Our goal is to prove that
\begin{equation}\label{e:base-bound}
\frac d{dt} \| G_R \|_{\dot{H}^{0,0}_{\ul,R}(\Omega_R)}^2 +
\kappa \| G_R \|_{\dot{H}^{0,1}_{\ul,R}(\Omega_R)}^2
+ \eps \| G_R \|_{\dot{H}^{1,0}_{\ul,R}(\Omega_R)}^2
\leq C_1 \left( 1 + X^\Lambda \right) Z_{0,R}^2,
\end{equation}
and, by induction, that
\begin{equation}\label{e:induction-bound}
\frac d{dt} \| G_R \|^2_{\dot{H}^{m,0}_{\ul,R/2^m}(\Omega_R)}
+ \kappa \| G_R \|^2_{\dot{H}^{m,1}_{\ul,R/2^m}(\Omega_R)}
+ \eps \| G_R \|^2_{\dot{H}^{m+1,0}_{\ul,R/2^m}(\Omega_R)}
\leq C_1 \left( 1 + X^\Lambda \right) Z_{m,R}^2
+ \tilde{G}_m,
\end{equation}
for all $1 \leq m \leq k$ and all $R$ sufficiently large (depending on the parameters and the data).
Recall that $C_1$ is a fixed constant that depends on $k$, $\rho_0$, $\kappa$, $\gamma$, and $N$, but not on $R$ or $\eps$.
Here $\tilde{G}_m$ is a time-integrable function such that, if $R$ is sufficiently large in a way we make explicit in the sequel,
\[ \int_0^T \tilde{G}_m(t) dt \leq \eps. \]

\subsubsection*{Base case:}
Recall that, for $m=0$, there is no longer any cut-off in $v$ in the $\dot{H}$-norms.
We multiply \eqref{e:glin} by $\phi^2 G_R$ and integrate over $\Omega_R$. Since $G_R = 0$ on
$\partial \Omega_R$, we can integrate by parts without boundary terms, yielding
\begin{align*}
\frac 1 2 \frac d{dt} &\| G_R \|_{L^2(\Omega_R)}^2 + \kappa \int_{\Omega_R} \phi^2 \vv^2 G_R^2 \dd x \dd v
+\eps \int_{\Omega_R} \phi^2 |\nabla G_R|^2 \dd x \dd v\\
&\leq \int_{\Omega_R} \phi v \cdot \nabla_x \phi G_R^2 \dd x \dd v
 -\int_{\Omega_R} \phi^2 \nabla_v G_R \cdot A[h] \cdot \nabla_v G_R \dd x \dd v\\
&\quad-\int_{\Omega_R} \phi^2 G_R \left( \nabla_v \cdot A[h_\eps] \right) \cdot \nabla_v G_R \dd x \dd v
- \frac 1 2 \int_{\Omega_R} \phi^2 G_R^2 \nabla_v \cdot B[h_\eps] \dd x \dd v
+ \int_{\Omega_R} \phi^2 C[h_\eps] G_R^2 \dd x \dd v\\
&=: I_1 + I_2 + I_3 + I_4 + I_5.
\end{align*}
Since $h$ is nonnegative, so is $h_\eps$, and therefore $A[h_\eps]$ is nonnegative definite. We may then ignore $I_2$,
since it is nonpositive. The transport term is easily bounded using Young's inequality:
\begin{equation*}
 |I_1| \leq
\frac \kappa N \int_{\Omega_R} \phi \phi_2 \vv^2 G_R^2 \dd x \dd v
+ C_1 \int_{\Omega_R} \phi \phi_2 G_R^2 \dd x \dd v
\leq \frac \kappa N \| G_R \|_{\dot{H}^{0,1}_{\ul,R}}^2 + C_1 Z_{0,R}^2.
\end{equation*}
For $I_3$, we note that $G_R$ vanishes on the boundary of $\Omega_R$, allowing us to integrate by parts without
boundary terms. We then use Lemma \ref{l:abc} and Young's inequality with $\eta > 0$ to obtain
\begin{align*}
 I_3 &= - \frac 1 2 \int_{\Omega_R} \phi^2 (\nabla_v \cdot A[h_\eps]) \cdot \nabla_v (G_R^2) \dd x \dd v
\lesssim \frac 1 2 \int_{\Omega_R} \phi^2 |D_v^2 A[h_\eps]| G_R^2 \dd x \dd v \\
&\lesssim \int_{\Omega_R} \phi^2 \| h_\eps \|_{H^2_{v}} \vv^{2+\gamma} G_R^2 \dd x \dd v
\leq \eta X \| \phi \vv G_R \|_{L^2(\Omega_R)}^2 + \eta^{-\max\{2/\delta-1,0\}} X \| \phi G_R \|_{L^2(\Omega_R)}^2.
\end{align*}
Setting $\eta = \kappa N^{-1} X^{-1}$ yields $I_3 \leq \frac \kappa N \| G_R \|_{\dot{H}^{0,1}_{\ul,R}(\Omega_R)}^2 + C_1 X^{\Lambda} Z_{0,R}^2.$
The remaining terms are bounded in a similar way. That is, using Lemma \ref{l:abc}, 
\begin{align*}
|I_4|  &\lesssim \int_{\Omega_R} \phi^2 G_R^2
|\nabla_v B[h_\eps]| \dd x \dd v \\
&\lesssim \int_{\Omega_R} \phi^2 \| h_\eps \|_{H^1_{v}} \vv^{2+\gamma} G_R^2 \dd x \dd v
\leq \frac \kappa N \| G_R \|_{\dot{H}^{0,1}_{\ul,R}(\Omega_R)}^2 + C_1 X^{\Lambda} Z_{0,R}^2,
\end{align*}
and using Lemma \ref{l:abc} and the Sobolev embedding, i.e.~$H^k(\R^6)\subset L^\infty(\R^6)$,
\begin{align*}
 |I_5| &\lesssim \int_{\Omega_R} \phi^2 G_R^2
|C[h_\eps]|\dd x \dd v
\lesssim \int_{\Omega_R} \phi^2 \| h_\eps \|_{L^\infty_{v}} \vv^{2+\gamma} G_R^2 \dd x \dd v\\
&\leq \frac \kappa N \| G_R \|_{\dot{H}^{0,1}_{\ul,R}(\Omega_R)}^2 + C_1 X^{\Lambda} Z_{0,R}^2.
\end{align*}
Summing over all $\alpha$ and $\beta$, taking a supremum in $a$, and
choosing $N$ large enough yields \eqref{e:base-bound}.

\subsubsection*{Induction Step:}
Let $\alpha$ and $\beta$ be multi-indices with $|\alpha|+|\beta| = m \leq k$. Let $\psi_m = \psi_{R/2^m}$.
Applying $\partial_x^\alpha \partial_v^\beta$ to \eqref{e:glin}, multiplying by
$\phi^2 \psi_m^2 \partial_x^\alpha \partial_v^\beta G_R$, and integrating over $\Omega_R$ yields
\begin{equation}\label{e:main-est}
\begin{split}
\frac{1}{2}\frac d {dt}&\int_{\Omega_R} \phi^2 \psi_m^2 |\partial_x^\alpha \partial_v^\beta G_R|^2 \dd x \dd v +
\kappa\int_{\Omega_R} \phi^2 \psi_m^2 \langle v \rangle^2 |\partial_x^\alpha \partial_v^\beta G_R|^2 \dd x \dd v\\
&= J_1 + J_2 + J_3 + J_4 + J_5 + J_6,
\end{split}
\end{equation}
where
\begin{equation}\label{e:J1J5}
\begin{split}
J_1 &= \int_{\Omega_R} \phi^2 \psi_m^2 \partial_x^\alpha\partial_v^\beta \text{tr}\left(A[h_\eps]
D_v^2 G_R\right) \partial_x^\alpha \partial_v^\beta G_R \dd x \dd v,\\
J_2 &= \int_{\Omega_R} \phi^2 \psi_m^2 \partial_x^\alpha \partial_v^\beta \left( B[h_\eps]
\cdot \nabla_v G_R \right) \partial_x^\alpha \partial_v^\beta G_R \dd x \dd v,\\
J_3 &= \int_{\Omega_R} \phi^2 \psi_m^2 \partial_x^\alpha \partial_v^\beta(C[h_\eps] G_R)
\partial_x^\alpha\partial_v^\beta G_R \dd x \dd v,\\
J_4 &= -\int_{\Omega_R} \phi^2 \psi_m^2 \left[ \partial_x^\alpha \partial_v^\beta, \
v \cdot \nabla_x + \kappa \vv^2 \right] G_R \partial_x^\alpha \partial_v^\beta G_R \dd x \dd v = \\
&= - \sum_{j=1}^3 \int_{\Omega_R}\phi^2 \psi_m^2 \left(\partial_x^{\alpha+e_j} \partial_v^{\beta-e_j} +
2\kappa(v_j \partial_x^\alpha\partial_v^{\beta-e_j} + \partial_x^\alpha\partial_v^{\beta-2e_j} )\right)
G_R \partial_x^\alpha\partial_v^\beta G_R \dd x \dd v,\\
J_5 &=  - \int_{\Omega_R} \phi^2 \psi_m^2  v\cdot \nabla_x \partial_x^\alpha \partial_v^\beta G_R
\partial_x^\alpha \partial_v^\beta G_R \dd x \dd v,\quad\text{ and}\\
J_6 &= \eps \int_{\Omega_R} \phi^2 \psi_m^2 \Delta \partial_x^\alpha\partial_v^\beta G_R
\partial_x^\alpha \partial_v^\beta G_R \dd x \dd v.
\end{split}
\end{equation}
Here, $\partial_x^{\alpha+e_1}$ means $\partial_{x_1}^{\alpha_1+1}\partial_{x_2}^{\alpha_2}
\partial_{x_3}^{\alpha_3}$, etc., $\partial_{v_j}^{i} G_R = 0$ if $i<0$, and the $[\cdot,\cdot]$ in $J_4$ is the commutator.
Note that we have not yet integrated by parts in any variable.
We proceed to bound each of the six terms above.

\subsubsection*{Estimating $J_1$:}
By the product rule we have
\begin{align*}
J_1 &= \sum_{\substack{\alpha'+\alpha''=\alpha \\ \beta'+\beta''=\beta}} \int_{\Omega_R} \phi^2 \psi_m^2
\text{tr} \left( \partial_x^{\alpha'} \partial_v^{\beta'}(A[h_\eps])
D_v^2 \partial_x^{\alpha''} \partial_v^{\beta''} G_R \right)
\partial_x^\alpha \partial_v^\beta G_R \dd x \dd v.
\end{align*}
We must use different techniques depending on the distribution of derivatives.

\subsubsection*{$J_1$ terms with $|\alpha'|+|\beta'| = m \geq 2$:}
In this case, $\alpha'=\alpha$ and $\beta'=\beta$.
Lemma \ref{l:abc} yields
\begin{equation*}
\begin{split}
\int_{\Omega_R} &\phi^2 \psi_m^2 |\partial_x^{\alpha}\partial_v^{\beta}A[h_\eps]|
|D^2_v G_R| |\partial_x^\alpha\partial_v^\beta G_R| \dd x \dd v \\
&\lesssim \int_{\Omega_R} \phi^2 \psi_m^2 \vv^{2+\gamma} \| \partial_x^\alpha h_\eps \|_{H^{|\beta|}_v}
 |\partial_x^\alpha \partial_v^\beta G_R|| D^2_v G_R | \dd x \dd v\\
&\lesssim \int_{\Omega_R}\phi_2 \| \partial_x^\alpha h_\eps \|_{H^{|\beta|}_v}
\phi \| \psi_m \vv^{1+\gamma} \partial_x^\alpha \partial_v^\beta G_R \|_{L^2_v}
\phi \| \psi_m \vv D^2_v G_R \|_{L^2_v} \dd x.
\end{split}
\end{equation*}
By H\"{o}lder's inequality in $x$ and the Sobolev embedding, i.e.~$H^2(\R^3)\subset L^\infty(\R^3)$, we have
\begin{equation*}
\begin{split}
\int_{\Omega_R}\phi_2 & \| \partial_x^\alpha  h_\eps \|_{H^{|\beta|}_v}
\phi \| \psi_m \vv^{1+\gamma} \partial_x^\alpha \partial_v^\beta G_R \|_{L^2_v}
\phi \| \psi_m \vv D^2_v G_R \|_{L^2_v} \dd x \\
&\leq
\begin{cases}
\| \phi_2 \partial_x^\alpha h_\eps \|_{L^2_x H_v^{|\beta|}}
\| \phi\psi_m \vv^{1+\gamma} \partial_x^\alpha \partial_v^\beta G_R \|_{L^2}
\|\phi\psi_m \vv D^2_v G_R \|_{L^\infty_x L^2_v} & \text{ if } \ m \geq 4 \\
\| \phi_2 \partial_x^\alpha h_\eps \|_{L^4_x H^{|\beta|}_v}
\| \phi\psi_m \vv^{1+\gamma} \partial_x^\alpha \partial_v^\beta G_R \|_{L^2}
\|\phi\psi_m \vv D^2_v G_R \|_{L^4_x L^2_v} & \text{ if } \ m = 3 \\
\| \phi_2 \partial_x^\alpha h_\eps \|_{L^\infty_x H^{|\beta|}_v}
\| \phi\psi_m \vv^{1+\gamma} \partial_x^\alpha \partial_v^\beta G_R \|_{L^2}
\|\phi\psi_m \vv D^2_v G_R \|_{L^2} & \text{ if } \ m = 2
\end{cases} \\
&\leq X \| \phi\psi_m \vv^{1+\gamma} \partial_x^\alpha \partial_v^\beta G_R \|_{L^2}
\| G_R \|_{\dot{H}^{m,1}_{\ul,R/2^m}(\Omega_R)}.
\end{split}
\end{equation*}
Therefore,
\begin{equation*}
\begin{split}
\int_{\Omega_R} &\phi^2 \psi_m^2 |\partial_x^{\alpha}\partial_v^{\beta}A[h_\eps]|
|D^2_v G_R| |\partial_x^\alpha\partial_v^\beta G_R| \dd x \dd v \\
	& \leq \eta X \| \phi\psi_m \vv \partial_x^\alpha \partial_v^\beta G_R \|_{L^2}
\| G_R \|_{\dot{H}^{m,1}_{\ul,R/2^m}(\Omega_R)}\\
	&\qquad  + C \eta^{-\max\{\frac{1}{|\gamma|} - 1,0\}} X \| \phi\psi_m \partial_x^\alpha \partial_v^\beta G_R \|_{L^2}
\| G_R \|_{\dot{H}^{m,1}_{\ul,R/2^m}(\Omega_R)} \\
	&\leq 2 \eta X \| G_R \|_{\dot{H}^{m,1}_{\ul,R/2^m}(\Omega_R)}^2
+ C \eta^{-2\max\{1/|\gamma|-1,0\}-1} X \| G_R \|_{\dot{H}^{m,0}_{\ul,R/2^m}(\Omega_R)}^2.
\end{split}
\end{equation*}
Setting $ \eta = \kappa N^{-1} X^{-1}$ then gives
\begin{equation}\label{e:J-bound-(m)}
\begin{split}
\int_{\Omega_R} \phi |\partial_x^{\alpha}\partial_v^{\beta}A[h_\eps]|
|D^2_v G_R| |\partial_x^\alpha\partial_v^\beta G_R| \dd x \dd v
	&\leq \frac{2\kappa}{N} \|G_R\|_{\dot{H}^{m,1}_{\ul,R/2^m}(\Omega_R)}^2 + C_1 X^{\max\{2/|\gamma|,2\}} Z_{m,R}^2\\
	&\leq \frac {2\kappa} N \|  G_R \|_{\dot{H}^{m,1}_{\ul,R/2^m}(\Omega_R)}^2 + C_1 (1+X^\Lambda) Z_{m,R}^2.
\end{split}
\end{equation}
The remaining case ($|\alpha'|+|\beta'|=m=1$) is handled later, as it relies on a different approach.

\subsubsection*{$J_1$ terms with $|\alpha'|+|\beta'|=m-1 \geq 2$:} The analysis is similar to the previous case.
We use Lemma \ref{l:abc} to obtain
\begin{equation*}
\begin{split}
\int_{\Omega_R} \phi^2\psi_m^2 &|\partial_x^{\alpha'}\partial_v^{\beta'}A[h_\eps]|
|D^2_v \partial_x^{\alpha''} \partial_v^{\beta''} G_R|
|\partial_x^\alpha\partial_v^\beta G_R| \dd x \dd v\\
&\leq \int_{\Omega_R} \phi^2\psi_m^2 \vv^{2+\gamma}
\| \partial_x^{\alpha'}h_\eps \|_{H^{|\beta'|}_v} |D_v^2 \partial_x^{\alpha''}
\partial_v^{\beta''} G_R| |\partial_x^\alpha \partial_v^\beta G_R| \dd x \dd v\\
&\lesssim \int_{\Omega_R} \phi_2 \| \partial_x^{\alpha'}h_\eps \|_{H^{|\beta'|}_v}
\phi \| \psi_m \vv D_v^2 \partial_x^{\alpha''}\partial_v^{\beta''} G_R \|_{L^2_v}
\phi \| \psi_m \vv^{1+\gamma} \partial_x^\alpha \partial_v^\beta G_R \|_{L^2_v} \dd x.
\end{split}
\end{equation*}
By H\"{o}lder's inequality in $x$ and  Sobolev embedding, we have
\begin{equation}\label{e:J-bound-(m-1)}
\begin{split}
&\int_{\Omega_R} \phi_2 \| \partial_x^{\alpha'} h_\eps \|_{H^{|\beta'|}_v}
\phi \| \psi_m \vv D_v^2 \partial_x^{\alpha''}\partial_v^{\beta''} G_R \|_{L^2_v}
\phi \| \psi_m \vv^{1+\gamma} \partial_x^\alpha \partial_v^\beta G_R \|_{L^2_v} \dd x\\
&\quad\leq
\begin{cases}
\|\phi_2 \partial_x^{\alpha'} h_\eps \|_{L^4_x H^{|\beta'|}_v}
\| \phi\psi_m \vv D^2_v \partial_x^{\alpha''}\partial_v^{\beta''} G_R \|_{L^4_x L^2_v}
\| \phi\psi_m \vv^{1+\gamma} \partial_x^\alpha\partial_v^\beta G_R \|_{L^2} & \text{ if } \ m \geq 4\\
\|\phi_2 \partial_x^{\alpha'} h_\eps \|_{L^\infty_x H^{|\beta'|}_v}
\| \phi\psi_m \vv D^2_v \partial_x^{\alpha''}\partial_v^{\beta''} G_R \|_{L^2}
\| \phi\psi_m \vv^{1+\gamma} \partial_x^\alpha\partial_v^\beta G_R \|_{L^2} & \text{ if } \ m = 3\\
\end{cases}\\
&\quad\leq X \| \phi\psi_m \vv^{1+\gamma} \partial_x^\alpha\partial_v^\beta G_R \|_{L^2}
\| G_R \|_{\dot{H}^{m,1}_{\ul,R/2^m}(\Omega_R)} \\
&\quad\leq \frac {2\kappa} N \|  G_R \|_{\dot{H}^{m,1}_{\ul,R/2^m}(\Omega_R)}^2 + C_1 (1+X^\Lambda) Z_{m,R}^2,
\end{split}
\end{equation}
where the last inequality was obtained in the same way as in \eqref{e:J-bound-(m)}. The remaining cases
($|\alpha'|+|\beta'| = m-1 = 1$ and $|\alpha'|+|\beta'| = m-1 = 0$) are handled later, as they rely
on a different approach.

\subsubsection*{$J_1$ terms with $2 \leq |\alpha'|+|\beta'| \leq m-2$:}
This is a generic ``middle case'' where each factor in the integrand has a mild number of derivatives.
Here we use Lemma \ref{l:abc} to write
\begin{equation}\label{e:J-bound-mid}
\begin{split}
\int_{\Omega_R} \phi^2&\psi_m^2 |\partial_x^{\alpha'}\partial_v^{\beta'}A[h_\eps]|
|D^2_v \partial_x^{\alpha''} \partial_v^{\beta''} G_R|
|\partial_x^\alpha\partial_v^\beta G_R| \dd x \dd v \\
& \lesssim \int_{\Omega_R} \phi_2 \vv^{\gamma+2}
\| \partial_x^{\alpha'}h_\eps \|_{H^{|\beta'|}_v} \phi\psi_m
|D_v^2 \partial_x^{\alpha''} \partial_v{\beta''} G_R|
\phi\psi_m |\partial_x^\alpha \partial_v^\beta G_R| \dd x \dd v\\
&\leq \| \phi_2 \partial_x^{\alpha'} h_\eps \|_{L^\infty_x H^{|\beta'|}_v}
\| \phi\psi_m \vv^{1+\gamma} \partial_x^\alpha \partial_v^\beta G_R \|_{L^2}
\| G_R \|_{\dot{H}^{m,1}_{\ul,R/2^m}(\Omega_R)} \\
&\leq \frac {2\kappa} N \|  G_R \|_{\dot{H}^{m,1}_{\ul,R/2^m}(\Omega_R)}^2 + C_1 (1+X^\Lambda) Z_{m,R}^2.
\end{split}
\end{equation}
The last lines follow by the Sobolev embedding and the same analysis as in
\eqref{e:J-bound-(m)} and \eqref{e:J-bound-(m-1)}.

\subsubsection*{$J_1$ terms with $|\alpha'|+|\beta'| =1$:}
This also includes the cases where $|\alpha'|+|\beta'| = m = 1$ and
$|\alpha'|+|\beta'| = m-1 = 1$. Here we use integration by parts to write
\begin{align*}
\int_{\Omega_R} &\phi^2\psi_m^2 \tr\left(\partial_x^{\alpha'} \partial_v^{\beta'}A[h_\eps] D^2_v \partial_x^{\alpha''}\partial_v^{\beta''} G_R\right) \partial_x^\alpha \partial_v^\beta G_R \dd x \dd v\\
	&= -\int_{\Omega_R} \phi^2 \psi_m^2 \nabla_v \partial_x^{\alpha''} \partial_v^{\beta''} G_R \cdot
\partial_x^{\alpha'}\partial_v^{\beta'}A[h_\eps] \cdot
\nabla_v \partial_x^\alpha \partial_v^\beta G_R \dd x \dd v \\
	& \quad \quad -\int_{\Omega_R} \phi^2 \psi_m^2 (\nabla_v \cdot \partial_x^{\alpha'}\partial_v^{\beta'}A[h_\eps] )
\cdot \nabla_v \partial_x^{\alpha''}\partial_v^{\beta''} G_R
\partial_x^\alpha \partial_v^\beta G_R \dd x \dd v\\
	& \quad \quad - 2 \int_{\Omega_R} \phi^2 \psi_m \nabla_v \psi_m \cdot
\partial_x^{\alpha'}\partial_v^{\beta'}A[h_\eps] \cdot
\nabla_v \partial_x^{\alpha''}\partial_v^{\beta''} G_R \partial_x^\alpha \partial_v^\beta G_R \dd x \dd v
	=: J_{1,1} + J_{1,2} + J_{1,3}.
\end{align*}
The term $J_{1,2}$ is
handled in the same way as \eqref{e:J-bound-(m)}-\eqref{e:J-bound-mid}. For $J_{1,3}$, by Lemma \ref{l:abc},
the fact that $\vv \leq 2R$ on $\Omega_R$, and the fact that $|\nabla_v \psi_m| \lesssim R^{-1}\psi_{m-1}$,
we have
\begin{equation*}
\begin{split}
J_{1,3} &\lesssim \int_{\Omega_R} \phi_2 \| \partial_x^{\alpha'} h_\eps \|_{H^{|\beta'|}_v}
\vv^{2+\gamma} \left( \psi_m^2 + |\nabla_v \psi_m|^2 \right) \phi^2
|\nabla_v \partial_x^{\alpha''}\partial_v^{\beta''}G_R | |\partial_x^\alpha \partial_v^\beta G_R|\dd x \dd v\\
& \lesssim \| \phi_2 \partial_x^{\alpha'}h_\eps \|_{L^\infty_x H^{|\beta'|}_v}
\| \phi\psi_m \vv^{1+\gamma} \nabla_v \partial_x^{\alpha''}\partial_v^{\beta''} G_R \|_{L^2}
\| \phi\psi_m \vv \partial_x^\alpha\partial_v^\beta G_R \|_{L^2}\\
&\quad\quad + R^\gamma \| \phi_2 \partial_x^{\alpha'}h_\eps \|_{L^\infty_x H^{|\beta'|}_v}
\| \phi\psi_{m-1} \nabla_v \partial_x^{\alpha''}\partial_v^{\beta''} G_R \|_{L^2}
\| \phi\psi_{m-1} \partial_x^\alpha\partial_v^\beta G_R \|_{L^2}\\
&\leq \frac {2\kappa} N \| G_R \|_{\dot{H}^{m,1}_{\ul,R/2^m}(\Omega_R)}^2
+ C_1 (1+X^\Lambda) Z_{m,R}^2 + R^\gamma X \| G_R \|_{\dot{H}^{m,0}_{\ul,R/2^{m-1}}(\Omega_R)}^2.
\end{split}
\end{equation*}
The bound for $J_{1,1}$ is a combination of the two used above.
Since $|\alpha''|+|\beta''| = m-1$, we have that
$ \partial_x^\alpha \partial_v^\beta G_R = \partial_z (\partial_x^{\alpha''}\partial_v^{\beta''} G_R)$,
with $\partial_z$ a single derivative in one coordinate (either $x$ or $v$).
Let $M = (m_{ij})$ be a symmetric matrix and let $H$ be a vector field. Then, we have the following identity:
\begin{align*}
 \int_{\R^6} H & \cdot M \cdot \partial_z H \dd x \dd v =
\sum_{i,j} \int_{\R^6} H_i m_{ij} \partial_z H_j \dd x \dd v \\
&= -\sum_{i,j} \int_{\R^6} \partial_z H_i m_{ij} H_j \dd x \dd v
- \sum_{i,j} \int_{\R^6} H_i \partial_z m_{ij} H_j \dd x \dd v
= -\frac 1 2 \int_{\R^6} H \cdot \partial_z M \cdot H \dd x \dd v.
\end{align*}
Applying this identity to $J_{1,1}$, we see that
\begin{equation}\label{e:andrei1}
\begin{split}
J_{1,1} &\leq \frac 1 2 \int_{\Omega_R} \phi^2 \psi_m^2 |\partial_z\partial_x^{\alpha'}\partial_v^{\beta'}A[h_\eps]|
|\nabla_v \partial_x^{\alpha''}\partial_v^{\beta''} G_R|^2 \dd x \dd v \\
 &\qquad+\int_{\Omega_R} | \partial_z (\phi^2 \psi_m^2) | |\partial_x^{\alpha'}\partial_v^{\beta'}A[h_\eps]|
|\nabla_v \partial_x^{\alpha''}\partial_v^{\beta''} G_R|^2 \dd x \dd v.
\end{split}
\end{equation}
The first term on the right-hand-side of \eqref{e:andrei1} is bounded in the same way as $J_{1,2}$
(see \eqref{e:J-bound-(m)}-\eqref{e:J-bound-mid}). The second term changes slightly based on
the nature of $\partial_z$. If $\partial_z$ is a derivative in $v$, then the second term is bounded in
the same way as $J_{1,3}$. If $\partial_z$ is a derivative in $x$, then
\begin{equation*}
\begin{split}
\int_{\Omega_R} | \partial_z (\phi^2 \psi_m^2) |& |\partial_x^{\alpha'}\partial_v^{\beta'}A[h_\eps]|
|\nabla_v \partial_x^{\alpha''}\partial_v^{\beta''} G_R|^2 \dd x \dd v\\
&\lesssim \int_{\Omega_R} \phi_2 \phi \psi_m^2 |\partial_x^{\alpha'}\partial_v^{\beta'} A[h_\eps]|
|\nabla_v \partial_x^{\alpha''}\partial_v^{\beta''} G_R|^2 \dd x \dd v\\
&\lesssim X \| \phi_2\psi_m \vv^{1+\frac \gamma 2}
\nabla_v\partial_x^{\alpha''}\partial_v^{\beta''} G_R \|_{L^2}^2
\lesssim \frac \kappa N \| G_R \|_{\dot{H}^{m,1}_{\ul,R/2^m}(\Omega_R)}^2 + C_1 (1+X^\Lambda) Z_{m,R}^2,
\end{split}
\end{equation*}
using Young's inequality as before. Combining the different estimates above, we see that
\begin{equation}\label{e:J-bound-(1)}
\begin{split}
\int_{\Omega_R} \phi^2\psi_m^2 & \text{tr} \left( (\partial_x^{\alpha'}\partial_v^{\beta'}A[h_\eps])
D^2_v \partial_x^{\alpha''}\partial_v^{\beta''}G_R \right)\partial_x^\alpha\partial_v^\beta G_R \dd x \dd v\\
&\leq \tilde C\frac {\kappa} N \| G_R \|_{\dot{H}^{m,1}_{\ul,R/2^m}(\Omega_R)}^2 + C_1 (1+X^\Lambda) Z_{m,R}^2
+C_1R^\gamma X \| G_R \|_{\dot{H}^{m,0}_{\ul,R/2^{m-1}}(\Omega_R)}^2.
\end{split}
\end{equation}

\subsubsection*{$J_1$ terms with $|\alpha'| = |\beta'| = 0$:}
This also includes the case where $|\alpha'|+|\beta'| = m-1 = 0$. Integrating by parts yields
\begin{align*}
\int_{\Omega_R} &\phi^2\psi_m^2 \tr\left(A[h_\eps] D^2_v \partial_x^{\alpha}\partial_v^{\beta} G_R\right) \partial_x^\alpha \partial_v^\beta G_R \dd x \dd v\\ &= -\int_{\Omega_R} \phi^2 \psi_m^2 \nabla_v \partial_x^\alpha \partial_v^\beta G_R \cdot
A[h_\eps] \cdot \nabla_v \partial_x^\alpha \partial_v^\beta G_R \dd x \dd v\\
& \quad \quad - \int_{\Omega_R} \phi^2 \psi_m^2 (\nabla_v \cdot A[h_\eps] ) \cdot
\nabla_v \partial_x^\alpha\partial_v^\beta G_R
\partial_x^\alpha \partial_v^\beta G_R \dd x \dd v\\
& \quad \quad - 2 \int_{\Omega_R} \phi^2 \psi_m \nabla_v \psi_m \cdot A[h_\eps] \cdot
\nabla_v \partial_x^\alpha\partial_v^\beta G_R \partial_x^\alpha \partial_v^\beta G_R \dd x \dd v.
\end{align*}
Crucially, the first term is nonpositive, so we may ignore it. For the rest, we integrate by parts once
more and use Lemma \ref{l:abc} and Young's inequality to obtain
\begin{equation}\label{e:J-bound-(0)}
\begin{split}
\int_{\Omega_R} &\phi^2\psi_m^2 \tr\left(A[h_\eps] D^2_v \partial_x^{\alpha}\partial_v^{\beta} G_R\right) \partial_x^\alpha \partial_v^\beta G_R \dd x \dd v\\
&\lesssim \int_{\Omega_R} \phi^2 |\partial_x^\alpha\partial_v^\beta G_R|^2
\left( \psi_m^2 |D_v^2 A[ h_\eps] + \psi_m |\nabla_v \psi_m| |\nabla_v A[ h_\eps]| +
(|\nabla_v \psi_m|^2+\psi_m |D_v^2 \psi_m|) |A[h_\eps]| \right) \dd x \dd v\\
&\lesssim \int_{\Omega_R} \phi^2 |\partial_x^\alpha\partial_v^\beta G_R|^2 \vv^{2+\gamma}
\phi_2 \| h_\eps \|_{L^\infty_x H^2_v} \left( \psi_m^2 + R^{-2}\psi_{m-1}^2 \right)
\dd x \dd v\\
&\leq \frac {2\kappa} N \| G_R \|_{\dot{H}^{m,1}_{\ul,R/2^m}(\Omega_R)}^2
+ C_1 (1+X^\Lambda) Z_{m,R}^2 + C_1 R^\gamma \| G_R \|_{\dot{H}^{m,0}_{\ul,R/2^{m-1}}(\Omega_R)}^2,
\end{split}
\end{equation}
analogously to the above estimate for $J_{1,3}$.

Combining \eqref{e:J-bound-(m)}-\eqref{e:J-bound-mid}, \eqref{e:J-bound-(1)}, and \eqref{e:J-bound-(0)},
we see that
\begin{equation}\label{e:J-1}
J_1 \leq \tilde{C} \frac \kappa N \| G_R \|_{\dot{H}^{m,1}_{\ul,R/2^m}(\Omega_R)}^2
+ C_1 (1+X^\Lambda) Z_{m,R}^2 + C_1 R^\gamma X \| G_R \|_{\dot{H}^{m,0}_{\ul,R/2^{m-1}}(\Omega_R)}^2.
\end{equation}

\subsubsection*{Estimating $J_2$:}
Next we consider the integral term $J_2$, which can be written as
\[
J_2 = \sum_{\substack{\alpha'+\alpha''=\alpha\\ \beta'+\beta'' = \beta}}C_{\alpha,\beta,\alpha',\beta'}\int_{\Omega_R}
\phi^2\psi_m^2 \partial_x^{\alpha'}\partial_v^{\beta'} B[h_\eps] \cdot
\nabla_v \partial_x^{\alpha''}\partial_v^{\beta''} G_R \partial_x^\alpha \partial_v^\beta G_R \dd x \dd v,
\]
where $C_{\alpha,\beta,\alpha',\beta'}$ is a positive constant depending only on $\alpha$, $\beta$, $\alpha'$, and $\beta'$.

\subsubsection*{$J_2$ terms with $2 \leq |\alpha'|+|\beta'| \leq m$:}
We use the estimates on $B[h_\eps]$ in Lemma \ref{l:abc}, H\"{o}lder's inequality, and the Sobolev embedding to obtain
\begin{equation}\label{e:J-2-bound-most}
\begin{split}
\int_{\Omega_R} & \phi^2 \psi_m^2 \partial_x^{\alpha'}\partial_v^{\beta'} B[ h_\eps] \cdot
\nabla_v \partial_x^{\alpha''}\partial_v^{\beta''} G_R \partial_x^\alpha\partial_v^\beta G_R \dd x \dd v\\
	&\lesssim \int_{\Omega_R} \phi_2 \vv^{2+\gamma}
\| \partial_x^{\alpha'} h_\eps \|_{H^{|\beta'|}_v}
\phi\psi_m |\nabla_v \partial_x^{\alpha''}\partial_v^{\beta''}G_R|
\phi\psi_m |\partial_x^\alpha\partial_v^\beta G_R | \dd x \dd v\\
	&\leq \int_{\Omega_R} \phi_2 \| \partial_x^{\alpha'} h_\eps \|_{H^{|\beta'|}_v}
\phi \| \psi_m \vv^{1+\gamma}\nabla_v \partial_x^{\alpha''}\partial_v^{\beta''} G_R\|_{L^2_v}
\phi \| \psi_m \vv \partial_x^\alpha\partial_v^\beta G_R\|_{L^2_v} \dd x\\
	&\leq \begin{cases}
\| \phi_2 \partial_x^{\alpha'} h_\eps \|_{L^\infty_x H^{|\beta'|}_v}
\| \phi\psi_m \vv^{1+\gamma} \nabla_v \partial_x^{\alpha''}\partial_v^{\beta''} G_R \|_{L^2}
\| \phi\psi_m \vv \partial_x^\alpha\partial_v^\beta G_R \|_{L^2}, &
|\alpha'|+|\beta'| \leq m-2\\
\| \phi_2 \partial_x^{\alpha'} h_\eps \|_{L^4_x H^{|\beta'|}_v}
\| \phi\psi_m \vv^{1+\gamma} \nabla_v \partial_x^{\alpha''}\partial_v^{\beta''} G_R \|_{L^4_x L^2_v}
\| \phi\psi_m \vv \partial_x^\alpha\partial_v^\beta G_R \|_{L^2}, &
|\alpha'|+|\beta'| = m-1\\
\| \phi_2 \partial_x^{\alpha} h_\eps \|_{L^2_x H^{|\beta|}_v}
\| \phi\psi_m \vv^{1+\gamma} \nabla_v G_R \|_{L^\infty_x L^2_v}
\| \phi\psi_m \vv \partial_x^\alpha\partial_v^\beta G_R \|_{L^2}, &
|\alpha'|+|\beta'| = m \geq 3\\
\| \phi_2 \partial_x^{\alpha} h_\eps \|_{L^\infty_x H^{|\beta|}_v}
\| \phi\psi_m \vv^{1+\gamma} \nabla_v G_R \|_{L^2}
\| \phi\psi_m \vv \partial_x^\alpha\partial_v^\beta G_R \|_{L^2}, &
|\alpha'|+|\beta'| = m = 2
\end{cases}\\
&\leq \frac {2 \kappa} N \| G_R \|_{\dot{H}^{m,1}_{\ul,R/2^m}(\Omega_R)}^2 + C_1 (1+X^\Lambda) Z_{m,R}^2,
\end{split}
\end{equation}
through identical analysis as for $J_1$. We emphasize that for each of the cases above,
$|\alpha'|+|\beta'|\geq 2$.

\subsubsection*{$J_2$ terms with $|\alpha'|+|\beta'| = 1$:}
Here there are $m$ derivatives on each of the $G_R$ factors in the integrands.
We again use Lemma \ref{l:abc}, H\"{o}lder's inequality, and the Sobolev embedding to obtain
\begin{equation}\label{e:J-2-bound-(1)}
\begin{split}
\int_{\Omega_R} \phi^2 & \psi_m^2 \partial_x^{\alpha'}\partial_v^{\beta'} B[h_\eps] \cdot
\nabla_v \partial_x^{\alpha''}\partial_v^{\beta''} G_R \partial_x^\alpha\partial_v^\beta G_R \dd x \dd v\\
& \lesssim \int_{\Omega_R} \phi_2 \| \partial_x^{\alpha'} h_\eps \|_{H^{|\beta'|}_v}
\phi \| \psi_m \vv^{1+\gamma}\nabla_v \partial_x^{\alpha''}\partial_v^{\beta''} G_R\|_{L^2_v}
\phi \| \psi_m \vv \partial_x^\alpha\partial_v^\beta G_R\|_{L^2_v} \dd x\\
& \leq \| \phi_2 \partial_x^{\alpha'} h_\eps \|_{L^\infty_x H^{|\beta'|}_v}
\| \phi\psi_m \vv^{1+\gamma} \nabla_v \partial_x^{\alpha''}\partial_v^{\beta''} G_R \|_{L^2}
\| \phi\psi_m \vv \partial_x^\alpha\partial_v^\beta G_R \|_{L^2}\\
&\leq \frac {2 \kappa} N \| G_R \|_{\dot{H}^{m,1}_{\ul,R/2^m}(\Omega_R)}^2 + C_1(1+ X)^{\Lambda} Z_{m,R}^2,
\end{split}
\end{equation}
through identical analysis as above.

\subsubsection*{$J_2$ term with $|\alpha'|+|\beta'|=0$:}
We integrate by parts to write
\begin{equation*}
\begin{split}
\int_{\Omega_R} \phi^2 & \psi_m^2 B[h_\eps] \cdot
\nabla_v \partial_x^{\alpha}\partial_v^{\beta} G_R \partial_x^\alpha\partial_v^\beta G_R \dd x \dd v\\
&= -\frac 1 2 \int_{\Omega_R} \phi^2\psi_m^2 \nabla_v \cdot B[h_\eps]
|\partial_x^\alpha\partial_v^\beta G_R|^2 \dd x \dd v  - \int_{\Omega_R} \phi^2\psi_m \nabla_v \psi_m \cdot B[h_\eps]
|\partial_x^\alpha\partial_v^\beta G_R|^2 \dd x \dd v\\
&=: J_{2,1} + J_{2,2}.
\end{split}
\end{equation*}
By Lemma \ref{l:abc},
\begin{equation}\label{e:J-2-bound-(0)-1}
J_{2,1} \lesssim \int_{\Omega_R} \phi^2\psi_m^2 \| h_\eps \|_{H^1_v} \vv^{2+\gamma}
|\partial_x^\alpha\partial_v^\beta G_R|^2 \dd x \dd v
\leq \frac \kappa N \| G_R \|_{\dot{H}^{m,1}_{\ul,R/2^m}(\Omega_R)}^2 + C_1 X^{\Lambda} Z_{m,R}^2.
\end{equation}
The other term is bounded in the same way as $J_{1,3}$. Namely,
\begin{equation}\label{e:J-2-bound-(0)-2}
\begin{split}
J_{2,2} &\lesssim \int_{\Omega_R} \phi^2 \| h_\eps \|_{L^2_v} \vv^{2+\gamma}
\left( \psi_m^2+|\nabla_v \psi_m|^2 \right)
|\partial_x^\alpha\partial_v^\beta G_R|^2 \dd x \dd v\\
&\leq \frac {2 \kappa} N \| G_R \|_{\dot{H}^{m,1}_{\ul,R/2^m}(\Omega_R)}^2 + C_1 (1+X^\Lambda) Z_{m,R}^2
+ C_1R^\gamma X \| G_R \|_{\dot{H}^{m,0}_{\ul,R/2^{m-1}}(\Omega_R)}^2.
\end{split}
\end{equation}

Combining \eqref{e:J-2-bound-most}, \eqref{e:J-2-bound-(1)}, \eqref{e:J-2-bound-(0)-1},
and \eqref{e:J-2-bound-(0)-2}, we see that
\begin{equation}\label{e:J-2}
J_2 \leq \tilde{C} \frac \kappa N \| G_R \|_{\dot{H}^{m,1}_{\ul,R/2^m}(\Omega_R)}^2
+ C_1 (1+X^\Lambda) Z_{m,R}^2 + C_1 R^\gamma X \| G_R \|_{\dot{H}^{m,0}_{\ul,R/2^{m-1}}(\Omega_R)}^2.
\end{equation}

\subsubsection*{Estimating $J_3$:}
We have
\[
J_3 = \sum_{\substack{\alpha'+\alpha''=\alpha\\ \beta'+\beta'' = \beta}}
\int_{\Omega_R} \phi^2 \psi_m^2  \partial_x^{\alpha'}\partial_v^{\beta'} C[h_\eps] 
\partial_x^{\alpha''}\partial_v^{\beta''} G_R \partial_x^\alpha\partial_v^\beta G_R \dd x \dd v.
\]
Here, the case-by-case analysis is simpler because there is no longer any extra gradient in $v$.
Recall that
$C[h_\eps] = \bar{c}[\mu h_\eps] + \bar{a}_{ij}[\mu h_\eps] \mu^{-1}\partial_{ij} \mu.$
Then, using the expression \eqref{e:mu} for $\partial_{ij} \mu /\mu$, we write
\begin{align*}
\int_{\Omega_R} \phi^2 &\psi_m^2  \partial_x^{\alpha'}\partial_v^{\beta'} C[h_\eps] 
\partial_x^{\alpha''}\partial_v^{\beta''} G_R \partial_x^\alpha\partial_v^\beta G_R \dd x \dd v\\
&\lesssim \int_{\Omega_R} \phi^2\psi_m^2 |\partial_x^{\alpha'}\partial_v^{\beta'} c[\mu h_\eps]|
|\partial_x^{\alpha''}\partial_v^{\beta''} G_R| |\partial_x^\alpha\partial_v^\beta G_R| \dd x \dd v\\
&\quad\quad + \int_{\Omega_R} \phi^2\psi_m^2 |\partial_x^{\alpha'}\partial_v^{\beta'}
\left( v \cdot a[\mu h_\eps] \cdot v \right)|
|\partial_x^{\alpha''}\partial_v^{\beta''} G_R| |\partial_x^\alpha\partial_v^\beta G_R| \dd x \dd v
=: J_{3,1} + J_{3,2}.
\end{align*}

\subsubsection*{$J_{3,1}$ term with $|\alpha'| + |\beta'| \geq 1$:} 
For the first term, in preparation to apply Lemma \ref{l:coeffs}(c) with $\theta = 7/2$, we notice that
\begin{align*}
J_{3,1} &\leq \int_{\Omega_R} \phi_2 \left( \int_{\R^3} \vv^{-\frac 7 2}
| \partial_x^{\alpha'}\partial_v^{\beta'}\bar{c}[\mu h_\eps]|^2 \dd v \right)^{\frac 1 2}
\phi\|\psi_m \vv^{\frac 7 8} G_R \|_{L^\infty_v}
\phi\|\psi_m \vv^{\frac 7 8} \partial_x^\alpha\partial_v^\beta G_R \|_{L^2_v} \dd x\\
  &\qquad \qquad \text{ if } \ |\alpha'|+|\beta'| = m \geq 2,\\
J_{3,1} &\leq \int_{\Omega_R} \phi_2 \left( \int_{\R^3} \vv^{-\frac 7 2}
| \partial_x^{\alpha'}\partial_v^{\beta'}\bar{c}[\mu h_\eps]|^3 \dd v \right)^{\frac 1 3}
\phi\|\psi_m \vv^{\frac 7{12}} G_R\|_{W^{1,6}_v}
\phi\|\psi_m \vv^{\frac 7{12}}\partial_x^\alpha\partial_v^\beta G_R \|_{L^2_v} \dd x\\
 &\qquad \qquad \text{ if } \ |\alpha'|+|\beta'| = m-1 \geq 1,\\
J_{3,1} &\leq \int_{\Omega_R} \phi_2 \left( \int_{\R^3} \vv^{-\frac 7 2}
|\partial_x^{\alpha'}\partial_v^{\beta'} \bar{c}[\mu h_\eps]|^4 \dd v \right)^{\frac 1 4}
\phi\|\psi_m \vv^{\frac 7{16}} G_R\|_{W^{m-2,4}_v}
\phi\|\psi_m \vv^{\frac 7{16}}\partial_x^\alpha\partial_v^\beta G_R \|_{L^2_v} \dd x\\
 &\qquad \qquad \text{ if } \ 2 \leq |\alpha'|+|\beta'| \leq m-2,\\
J_{3,1} &\leq \int_{\Omega_R} \phi_2 \left( \int_{\R^3} \vv^{-\frac 7 2}
| \partial_x^{\alpha'}\partial_v^{\beta'}\bar{c}[\mu h_\eps]|^6 \dd v \right)^{\frac 1 6}
\phi\|\psi_m \vv^{\frac 7{24}} G_R\|_{W^{m-1,3}_v}
\phi\|\psi_m \vv^{\frac 7{24}}\partial_x^\alpha\partial_v^\beta G_R \|_{L^2_v} \dd x\\
 &\qquad \qquad \text{ if } \ |\alpha'|+|\beta'| = 1, \ \text{ any } m.
\end{align*}
Therefore,
\begin{align*}
J_{3,1} &\lesssim \begin{cases}\| \phi_2 h_\eps \|_{H^m} \| \phi\psi_m \vv^{\frac 7 8} G_R \|_{L^\infty}
\| \phi\psi_m \vv^{\frac 7 8} G_R \|_{H^m}  &\text{ if } \ |\alpha'|+|\beta'| = m \geq 2,\\
\|\phi_2 h_\eps \|_{W^{m-1,3}} \| \phi\psi_m \vv^{\frac 7{12}} G_R\|_{W^{1,6}}
\| \phi\psi_m \vv^{\frac 7{12}} G_R \|_{H^m}  &\text{ if } \ |\alpha'|+|\beta'| = m-1 \geq 1,\\
 \| \phi_2 h_\eps \|_{W^{m-2,4}} \| \phi\psi_m \vv^{\frac 7{16}} G_R\|_{W^{m-2,4}}
\| \phi\psi_m \vv^{\frac 7{16}} G_R \|_{H^m}  &\text{ if } \ 2 \leq |\alpha'|+|\beta'| \leq m-2,\\
\| \phi_2 h_\eps \|_{W^{1,6}} \|\phi\psi_m \vv^{\frac 7{24}} G_R\|_{W^{m-1,3}}
\| \phi\psi_m \vv^{\frac 7{24}} G_R \|_{H^m}  &\text{ if } \ |\alpha'|+|\beta'| = 1, \ \text{ any } m.\end{cases}
\end{align*}
By (weighted) Sobolev embedding and Young's inequality with $\eta>0$, we have in all cases
\begin{equation}\label{e:J-3-bound-1}
J_{3,1} \lesssim X \left( \eta \| G_R \|_{\dot{H}^{m,1}_{\ul,R/2^m}(\Omega_R)}
+ C_1 \eta^{-7} Z_{m,R} \right)^2
	\leq \frac {2\kappa}{N} \|  G_R \|_{H^{m,1}_{\ul,R/2^m}}^2 + C_1 (1+X^\Lambda) Z_{m,R}^2,
\end{equation}
where we have chosen $\eta = \kappa^{\frac 1 2} N^{-\frac 1 2} X^{-\frac 1 2}$.

\subsubsection*{$J_{3,1}$ term with $|\alpha'| + |\beta'| = 0$:}
From Lemma \ref{l:coeffs}(b) with $p=\infty$ and the Sobolev embedding ($H^4(\R^6) \subset L^\infty(\R^6)$), we see that
\begin{equation}\label{e:J-3-bound-2}
J_{3,1}
	\lesssim \|\phi_2 h_\eps \|_{L^\infty} \int_{\Omega_R} \phi^2\psi_m^2 \vv^\gamma 
|\partial_x^\alpha \partial_v^\beta G_R|^2 \dd x \dd v
	\leq (1+X^\Lambda) Z_{m,r}^2
\end{equation}

\subsubsection*{$J_{3,2}$ term:}
For the last term, Lemma \ref{l:coeffs}(a) with $p=2$ implies
\begin{equation}\label{e:J-3-bound-3}
\begin{split}
J_{3,2} &\lesssim \| \phi_2 h_\eps \|_{H^m} \int_{\Omega_R} \phi^2\psi_m^2
| \vv \partial_x^\alpha \partial_v^\beta G_R |
| \vv^{1+ \gamma} \partial_x^\alpha\partial_v^\beta G_R | \dd x \dd v\\
&\leq \frac{2 \kappa} N \| G_R \|_{\dot{H}^{m,1}_{\ul,R/2^m}(\Omega_R)}^2 + C_1 (1+X^\Lambda) Z_{m,R}^2.
\end{split}
\end{equation}

Combining \eqref{e:J-3-bound-1}-\eqref{e:J-3-bound-3}, we see that
\begin{equation}\label{e:J-3}
J_3 \leq \tilde{C} \frac \kappa N \| G_R \|_{\dot{H}^{m,1}_{\ul,R/2^m}(\Omega_R)}^2
+ C_1 (1+X^\Lambda) Z_{m,R}^2 + C_1 R^\gamma X \| G_R \|_{\dot{H}^{m,0}_{\ul,R/2^{m-1}}(\Omega_R)}^2.
\end{equation}

\subsubsection*{Estimating $J_4$ and $J_5$:} All derivatives appearing in $J_4$ are of order at most $m$.
By Cauchy-Schwartz,
\begin{equation}\label{e:J-4}
 J_4 \lesssim \int_{\Omega_R} \phi^2\psi_m^2 \vv |D_{x,v}^m G_R | |\partial_x^\alpha\partial_v^\beta G_R|\dd x \dd v
\leq \frac \kappa N \| G_R \|_{\dot{H}^{m,1}_{\ul,R/2^m}(\Omega_R)}^2 + C_1 Z_{m,R}^2.
\end{equation}
Likewise, since $\nabla_x \phi \lesssim \phi_2$, we integrate by parts in $J_5$ to obtain
\begin{equation}\label{e:J-5}
\begin{split}
 J_5 &= \int_{\Omega_R} \psi_m^2\phi \nabla_x \phi\cdot v |\partial_x^\alpha\partial_v^\beta G_R|^2 \dd x \dd v
\lesssim \int_{\Omega_R} \phi\phi_2\psi_m^2 \vv |\partial_x^\alpha\partial_v^\beta G_R|^2 \dd x \dd v\\
&\leq \frac \kappa N \| G_R \|_{\dot{H}^{m,1}_{\ul,R/2^m}(\Omega_R)}^2 + C_1 Z_{m,R}^2.
\end{split}
\end{equation}

\subsubsection*{Estimating $J_6$:}
For the final quantity on the right-hand side of \eqref{e:main-est}, we integrate by parts to write
\[
 J_6 = -\eps \int_{\Omega_R} \phi^2\psi_m^2 |\nabla \partial_x^\alpha\partial_v^\beta G_R|^2 \dd x \dd v
-\eps \int_{\Omega_R} \nabla \left( \phi^2\psi_m^2 \right) \cdot \nabla \partial_x^\alpha\partial_v^\beta G_R
\partial_x^\alpha\partial_v^\beta G_R \dd x \dd v.
\]
The first term is negative. We will need it to close the estimates. We integrate the second term by
parts once more (note that $\supp(\phi \psi_m) \subset \Omega_R$ for $m \geq 1$),
yielding
\begin{equation*}
\begin{split}
-\eps \int_{\Omega_R} &\nabla \left( \phi^2\psi_m^2 \right) \cdot \nabla \partial_x^\alpha\partial_v^\beta G_R
\partial_x^\alpha\partial_v^\beta G_R \dd x \dd v
= \frac \eps 2 \int_{\Omega_R} \Delta \left( \phi^2\psi_m^2 \right)
|\partial_x^\alpha\partial_v^\beta G_R|^2 \dd x \dd v\\
&\lesssim \eps \int_{\Omega_R} \left( \phi^2\psi_m^2 + \psi_m^2(|\nabla_x \phi|^2 + \phi |D^2_x \phi|)
+ \phi^2(|\nabla_v \psi_m|^2 + \psi_m |D^2_v \psi_m|) \right)
|\partial_x^\alpha\partial_v^\beta G_R|^2 \dd x \dd v\\
&\leq C_1 \eps Z_{m,R}^2 + C_1 \frac \eps {R^2} \| G_R \|_{\dot{H}^{m,0}_{\ul,R/2^{m-1}}(\Omega_R)}^2.
\end{split}
\end{equation*}
For the last inequality, we used analysis similar to $J_{1,3}$ above.
We also used that $|\nabla_v \psi_m|$, $|D^2_v \psi_m| \lesssim R^{-2} \psi_{m-1}$.
Therefore,
\begin{equation}\label{e:J-6}
J_6 + \eps \int_{\Omega_R} \phi^2\psi_m^2 |\nabla \partial_x^\alpha\partial_v^\beta G_R|^2 \dd x \dd v
\leq C_1 \eps Z_{m,R}^2 + C_1 \frac \eps {R^2} \| G_R \|_{\dot{H}^{m,0}_{\ul,R/2^{m-1}}(\Omega_R)}^2.
\end{equation}

\subsubsection*{Proof of \eqref{e:induction-bound}:}
Combining \eqref{e:J-1}, \eqref{e:J-2}, \eqref{e:J-3}, \eqref{e:J-4}, \eqref{e:J-5}, and \eqref{e:J-6},
then summing over all $\alpha$ and $\beta$ with $|\alpha|+|\beta| = m$ yields
\begin{equation}\label{e:almost-done}
\begin{split}
\frac 1 2 \frac d{dt} &\| G_R \|_{\dot{H}^{m,0}_{\ul,R/2^m}(\Omega_R)}^2
+\kappa \| G_R \|_{\dot{H}^{m,1}_{\ul,R/2^m}(\Omega_R)}^2
+\eps \| G_R \|_{\dot{H}^{m+1,0}_{\ul,R/2^m}(\Omega_R)}^2\\
&\leq \tilde{C} \frac \kappa N \| G_R \|_{\dot{H}^{m,1}_{\ul,R/2^m}(\Omega_R)}^2
+ C_1 (1+X^\Lambda) Z_{m,R}^2
+ C_1 R^\gamma X \| G_R \|_{\dot{H}^{m,0}_{\ul,R/2^{m-1}}(\Omega_R)}^2.
\end{split}
\end{equation}
If we now choose $N$ larger than $2\tilde{C}/\kappa$, the first term on the right-hand side is absorbed
on the left. We then define
\[ \tilde{G}_m = C_1 R^\gamma X \| G_R \|_{\dot{H}^{m,0}_{\ul,R/2^{m-1}}(\Omega_R)}^2. \]
By induction, \eqref{e:induction-bound} holds for $m-1$. Integrating from $0$ to $t$, we have
\begin{align*}
 \| G_R(t) \|_{\dot{H}^{m-1,0}_{\ul,R/2^{m-1}}(\Omega_R)}^2
&+ \kappa \int_0^t \| G_R(s) \|_{\dot{H}^{m-1,1}_{\ul,R/2^{m-1}}(\Omega_R)}^2 \dd s
+ \eps \int_0^t \| G_R(s) \|_{\dot{H}^{m,0}_{\ul,R/2^{m-1}}(\Omega_R)}^2 \dd s\\
&\leq \| g_{in} \|_{H^k_{\ul}}^2 + C_1 \int_0^t \left( 1 + X(s)^\Lambda \right)
Z_{m-1,R}^2(s) \dd s + \eps.
\end{align*}
Applying Gr\"{o}nwall's inequality, we see that for all $t \in [0,T]$
\begin{align*}
\| G_R \|_{\dot{Y}^{m-1}_{t,R/2^{m-1},R}}^2
+ \eps \int_0^t \| G_R(s) \|_{\dot{H}^{m,0}_{\ul,R/2^{m-1}}(\Omega_R)}^2 \dd s
\leq \left( \| g_{in} \|_{H^k_{\ul}}^2 + \eps \right)
\exp \left( C_1 \int_0^t \left( 1+X(s)^\Lambda \right) \dd s \right).
\end{align*}
See \eqref{e:locnorm}. In particular, the bound on the second term implies that
\begin{equation}\label{e:R-eps-condition}
 \int_0^{T} \tilde{G}_m(s) \dd s \leq \eps \ \text{ if } \
R \geq 2 \left( \frac {C_1 X}{\eps^2} \left( \| g_{in} \|_{H^k_{\ul}}^2 + \eps \right)
\exp \left( C_1 T (1+X^\Lambda) \right) \right)^{- \frac 1 \gamma}.
\end{equation}
Fixing $R$ as above, and using the bound from \eqref{e:R-eps-condition} in \eqref{e:almost-done} yields \eqref{e:induction-bound}.

\subsubsection*{Conclusion of Proof:}
Summing \eqref{e:base-bound} and \eqref{e:induction-bound} for all $m$ up to $k$ and
using Gr\"{o}nwall's inequality as above gives, for all $t \in [0,T_{\rho_0,\kappa}]$,
\begin{equation}\label{e:gronwall}
\| G_R \|_{Y^k_{t,R}}^2 \leq \left(\| g_{in} \|_{H^k_{\ul}}^2 + \eps \right)
\exp \left( C_1 T (1+X(t)^\Lambda) \right).
\end{equation}

Now consider the sequence of functions $\lbrace{ G_K \rbrace}$ (for $K \in \N$)
of solutions to the problem \eqref{e:glin} on $\Omega_K$ with boundary conditions \eqref{e:ginit}
and with $\eps = (\ln K)^{-1}$. Note that this choice of $\eps$ still allows condition
\eqref{e:R-eps-condition} to hold true, for sufficiently large $K$ depending on the parameters of the
problem.

The bound in \eqref{e:gronwall} holds for each such $G_K$. If $L>0$ is any large
number, we conclude that $\| \chi_L G_K \|_{Y^k_T}$ is bounded uniformly in $K$.
Recall that $\chi_L$ is a smooth cutoff function in $x$ and $v$, supported in the ball of radius $L-1$,
and equal to $1$ in the ball of radius $L-2$. Therefore, a subsequence converges weakly to some
limit in $\bar{G}_L \in Y^k_T$ supported on the ball of radius $L-1$. Note that
$\bar{G}_L$ and $\bar{G}_{L'}$ are identical on the ball of radius $\min({L,L'})$ and
for all $t \in [0,T]$.

A diagonalization argument allows us to take $L$ to $\infty$ and extract a subsequence (which we also denote
as $\lbrace{ G_K \rbrace}$) and a limit $\bar{G} \in Y^k_T$ such that
\[
 G_K \rightarrow \bar{G} \ \text{ in } Y^k_T \ \text{ on compact sets.}
\]
Lemma \ref{l:abc} implies that $A[h_\eps] \to A[h]$ in the space $L^\infty([0,T],H^k_{\ul})$ as $\eps = (\ln K)^{-1}\to 0$, and similarly for $B[h_\eps]$ and $C[h_\eps]$. Since $k \geq 4$, the function $\bar{G}$ has sufficient regularity that it is a solution
to the linearized problem \eqref{e:true-glin} in $W^{1,\infty}_t H^2_{x,v}$ on all of $\R^6$.
By the maximum principle for \eqref{e:glin}, each $G_K$ is nonnegative,
and therefore so is $\bar{G}$. Furthermore, $\bar{G}$ inherits the
bound \eqref{e:main-bound} from \eqref{e:gronwall}.
\end{proof}

We are now ready to solve \eqref{e:g} by constructing a sequence of approximate solutions in the space
$Y_T^k$ given by \eqref{eq:Y_definition}.

\begin{theorem}\label{t:gh}
Assume that $g_{in} \in H^k_{\ul}$, and that
\[\|g_{in}\|_{H^k_{\ul}} \leq M_0.\]
Then, for some $T\in (0,T_{\rho_0,\kappa}]$ depending on $M_0$, there exists a unique nonnegative
$g \in Y_T^k$ solving \eqref{e:g} with $g(0,x,v) = g_{in}(x,v)$.
\end{theorem}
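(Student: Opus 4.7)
The plan is to construct $g$ as a fixed point of an iteration based on the linear theory of Lemma \ref{l:gre}. Set $g_0 \equiv g_{in}$ (trivially in $Y_T^k$), and for $n \geq 0$ define $g_{n+1}$ to be the nonnegative solution to the linearized problem \eqref{e:true-glin} with $h = g_n$ and initial datum $g_{in}$, as provided by Lemma \ref{l:gre}. The estimate \eqref{e:main-bound} then gives
\[
\|g_{n+1}\|_{Y^k_T}^2 \leq M_0^2 \exp\left(C_1 T \left(1 + \|g_n\|^\Lambda_{L^\infty([0,T],H^k_{\ul})}\right)\right).
\]
Choose $M := 2M_0$ and pick $T \in (0, T_{\rho_0,\kappa}]$ small enough (depending on $M_0$, $C_1$, $\Lambda$) that $\exp(C_1 T(1 + M^\Lambda)) \leq 4$. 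Since $\|g_n\|_{L^\infty H^k_{\ul}} \leq \|g_n\|_{Y^k_T}$, an immediate induction gives $\|g_n\|_{Y^k_T} \leq M$ for every $n$, so the sequence is uniformly bounded in $Y^k_T$.

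Next I would show that $\{g_n\}$ is Cauchy in a weaker norm, say $Z_T := L^\infty([0,T], L^2_{\ul}) \cap L^2([0,T], L^{2,1}_{\ul})$. Setting $w_n = g_{n+1} - g_n$, subtracting the equations for consecutive iterates yields
\[
\partial_t w_n + v\cdot\nabla_x w_n + \kappa\vv^2 w_n = \tr(A[g_n]D_v^2 w_n) + B[g_n]\cdot\nabla_v w_n + C[g_n]w_n + \mathcal{E}_n,
\]
where the forcing is
\[
\mathcal{E}_n = \tr\bigl((A[g_n]-A[g_{n-1}])D_v^2 g_n\bigr) + (B[g_n]-B[g_{n-1}])\cdot\nabla_v g_n + (C[g_n]-C[g_{n-1}])g_n.
\]
Because $A,B,C$ are linear in $h$, Lemma \ref{l:abc} applied to $w_{n-1} = g_n - g_{n-1}$ yields pointwise control of each factor in $\mathcal{E}_n$ by $\|w_{n-1}(x,\cdot)\|_{H^k_v}$ times an appropriate weight $\vv^{\gamma+2}$. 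Multiplying the equation for $w_n$ by $\phi^2 w_n$, integrating, and using the positivity of $A[g_n]$, the coercivity from the $\kappa\vv^2 w_n^2$ term, and the $Y^k_T$-bound on $g_n$ in the same manner as the base case of Lemma \ref{l:gre}, I expect an estimate of the form
\[
\|w_n\|_{Z_T}^2 \leq C T (1+M^\Lambda)\,\|w_{n-1}\|_{Z_T}\,\|w_{n-1}\|_{Y^k_T}^{\text{something small}} \leq \tfrac12 \|w_{n-1}\|_{Z_T}^2
\]
after shrinking $T$ further if needed. This makes $\{g_n\}$ Cauchy in $Z_T$, hence convergent to some $g \in Z_T$; interpolation with the uniform $Y^k_T$ bound then gives convergence strongly in any intermediate space, and weak-$*$ convergence in $Y^k_T$ identifies the limit $g \in Y^k_T$. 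Nonnegativity of $g$ is inherited from each $g_n$ via Lemma \ref{l:gre}. Passing to the limit in each nonlinear term uses the same Lemma \ref{l:abc} estimates: e.g.\ $A[g_n] \to A[g]$ in $L^\infty_{t,x}$ (with weighted control in $v$) follows from $g_n \to g$ in $Z_T$ plus the uniform $H^k_{\ul}$-bound via Sobolev embedding ($H^k_{\ul} \hookrightarrow L^\infty$ for $k \geq 4$).

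Uniqueness follows by the same weighted $L^2$ energy estimate applied to the difference of two solutions in $Y^k_T$. The main technical obstacle is ensuring that the contraction estimate in the weaker $Z_T$ norm actually closes despite the nonlinearity involving two derivatives of $g_n$ in the $D_v^2$ term: one must integrate by parts the $A[g_n]-A[g_{n-1}]$ term in the standard way, moving one velocity derivative off $g_n$, and use Lemma \ref{l:abc} to bound $\nabla_v(A[g_n]-A[g_{n-1}])$ by $\|w_{n-1}\|_{H^1_v}$ times $\vv^{\gamma+2}$. Because only first-order derivatives of $w_{n-1}$ appear, and these are controlled by the uniform $Y^k_T$ bound (not the weaker $Z_T$ norm), one must carefully interpolate, for instance writing $\|w_{n-1}\|_{H^1_{\ul}} \leq \|w_{n-1}\|_{Z_T}^{1-\theta}\|w_{n-1}\|_{Y^k_T}^\theta$ and then absorbing the $Y^k_T$ factor into the $T$-dependent small constant. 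This interpolation-plus-uniform-bound trick is the standard device, but verifying it closes for all $\gamma \in [-3, 0)$ — particularly the Coulomb endpoint — is where the argument needs the most attention.
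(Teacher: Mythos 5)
Your overall scheme---iterating the linearized problem of Lemma~\ref{l:gre} with $h=g_n$, using~\eqref{e:main-bound} to get a uniform $Y^k_T$ bound $\|g_n\|_{Y^k_T}\le 2M_0$ by induction, and then showing the differences $w_n=g_{n+1}-g_n$ form a contraction---is the same as the paper's. The paper, however, runs the contraction estimate directly in the strong norm $Y^k_T$, repeating the energy estimates of Lemma~\ref{l:gre} for $w^n$ at order $k$. This is not just a stylistic choice: when the coefficients of the forcing $\mathcal E_n$ are estimated via Lemmas~\ref{l:coeffs} and~\ref{l:abc}, a power $\|w^{n-1}\|_{H^k_{\ul}}^\Lambda$ appears (from the $\eta$-Young trick used to absorb the $\vv^{\gamma+2}$ weights into the $\kappa\vv^2$ coercivity), and the key step is to use the uniform a priori bound $\|w^{n-1}\|_{Y^k_T}\le 4M_0$ to write $\|w^{n-1}\|_{H^k_{\ul}}^\Lambda\le(4M_0)^{(\Lambda-2)_+}\|w^{n-1}\|_{H^k_{\ul}}^2$, which closes the contraction with the correct quadratic homogeneity and a small $T$.

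Your proposed contraction in the weaker norm $Z_T$ has a gap at exactly this point. As written, your estimate reads
\[
\|w_n\|_{Z_T}^2 \leq C T (1+M^\Lambda)\,\|w_{n-1}\|_{Z_T}\,\|w_{n-1}\|_{Y^k_T}^{a} \leq \tfrac12 \|w_{n-1}\|_{Z_T}^2,
\]
but the right-hand inequality requires $CT(1+M^\Lambda)\|w_{n-1}\|_{Y^k_T}^a\leq\tfrac12\|w_{n-1}\|_{Z_T}$, which fails as soon as $\|w_{n-1}\|_{Z_T}$ is small---so this is not a contraction, and the iterates need not converge. If one writes $a_n=\|w_n\|_{Z_T}$, the recursion $a_n^2\lesssim K a_{n-1}$ has fixed point $K$, not $0$. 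To make a weak-norm contraction argument close, you would need the forcing estimate to produce $\|w_{n-1}\|_{Z_T}^2$ (with bounded $Y^k_T$-dependent coefficients), not $\|w_{n-1}\|_{Z_T}^1$. That is plausible for the $A$- and $B$-terms via Cauchy--Schwarz, since Lemma~\ref{l:coeffs}(a) controls $A[w_{n-1}],B[w_{n-1}]$ by $\|w_{n-1}(x,\cdot)\|_{L^2_v}\vv^{\gamma+2}$. But for the $C[w_{n-1}]g_{n-1}$ term, Lemma~\ref{l:abc} controls $C[w_{n-1}]$ only through $\|w_{n-1}(x,\cdot)\|_{W^{|\beta|,\infty}_v}$, which a weak $L^2$-type norm does not reach, and the alternative Lemma~\ref{l:coeffs}(c) (with $p=2$) gives a weighted $L^2_v$ bound only for $\theta>3+2\gamma$, which forces weights near $\vv^{3/2}$ on each factor for $\gamma$ close to $0$ and is genuinely delicate to absorb into the coercivity, even after splitting the weight as the paper does in its $J_3$ estimate. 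The paper's $Y^k_T$ contraction avoids all of this because Sobolev embedding gives $L^\infty$ control of $w^{n-1}$ for free when $k\ge 4$. In short: the uniform-bound-plus-weak-norm-contraction strategy is not wrong in spirit, but the contraction inequality you wrote has the wrong exponent, and the $C$-term is a genuine obstruction to making it close without ending up back in the $H^k_{\ul}$ framework.
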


We emphasize that, although $T$ depends on $M_0$, $M_0$ can be arbitrarily large.

\begin{proof}
Define $g^0(t,x,v) = g_{in}(x,v)$ and, for $n \geq 1$, define the sequence $\lbrace{ g^n \rbrace}$
recursively as the solution of
\begin{equation}\label{e:iterate}
\partial_t g^n + v \cdot \nabla_x g^n + \kappa \vv g^n =
\text{tr} \left( A[g^{n-1}] D_v^2 g^n \right) + B[g^{n-1}] \cdot \nabla_v g^n + C[g^{n-1}] g^n,
\end{equation}
with $g^n(0,x,v) = g_{in}(x,v)$. This is precisely the linearized problem \eqref{e:true-glin}. Then,
by Lemma \ref{l:gre}, for any $T \in (0,T_{\rho_0,\kappa}]$, each $g^n$ exists, is nonnegative, belongs to $Y^k_T$,
and satisfies
\begin{equation}\label{e:gn-bound}
\| g^n \|^2_{Y^k_T} \leq \| g_{in} \|_{H^k_{\ul}}^2
\exp \left( C_1 T \left(1 + \| g^{n-1} \|_{L^\infty([0,T],H^k_{\ul})}^\Lambda \right) \right),
\end{equation}
for some $\Lambda > 1$ and $C_1>0$ that are independent of $n$.

Assume by induction that, for $n \geq 1$,
\begin{equation}\label{e:induct}
\|g^{n-1}\|_{L^\infty([0,T],H^k_{\ul})} \leq 2M_0,
\end{equation}
for some $T\in (0,T_{\rho_0,\kappa}]$. This hypothesis holds for $n=1$ by our assumption on $g_{in}$.
Then \eqref{e:gn-bound} becomes
\[\|g^n\|_{Y_T^k}^2 \leq M_0^2 \exp(C_1 T(1+(2M_0)^\Lambda).\]
If we take
\[ T \leq \min\left\{\frac {2\ln 2}{C_1(1+(2M_0)^\Lambda)}, T_{\rho_0,\kappa}\right\},  \]
then $\|g^n\|_{Y_T^k} \leq 2M_0$, and in particular $\|g^n\|_{L^\infty([0,T],H^k_{\ul})} \leq 2M_0$.
Note that $T$ is independent of $n$.
We conclude \eqref{e:induct} holds for all $n\geq 1$.

Next, define $w^n = g^n - g^{n-1}$. Equation \eqref{e:iterate} implies, for $n \geq 2$,
\begin{align*}
\partial_t w^n + v\cdot \nabla_x w^n + \kappa \vv^2 w^n &=
\tr\left( A[g^{n-1}]D_v^2 w^n \right) + B[g^{n-1}] \cdot \nabla_v w^n + C[g^{n-1}] w^n\\
&\quad \quad + \tr\left( A[w^{n-1}]D_v^2 g^{n-1} \right) + B[w^{n-1}] \cdot \nabla_v g^{n-1}
+ C[w^{n-1}] g^{n-1},
\end{align*}
and $w^n(0,x,v) = 0$. For all multi-indices with $|\alpha|+|\beta| \leq k$, we differentiate the equation
for $w^n$ by $\partial_x^\alpha \partial_v^\beta$, multiply by $\phi^2 \partial_x^\alpha \partial_v^\beta w^n$,
and integrate over $\R^6$. Note that the estimates developed in the proof of Lemma \ref{l:gre} were
independent of $R$ and $\eps$. Repeating the calculations (now without any cutoff in $v$ or mollification of $g^{n-1}$ or $w^{n-1}$) yields,
by Lemmas \ref{l:coeffs} and \ref{l:abc} and the fact that
$\|g^n\|_{Y_T^k}\leq 2M_0$ for all $n\geq 0$,
\begin{align*}
\|w^n\|_{Y_T^k}^2 &\leq C_1\int_0^T\left(1+\|g^{n-1}(s)\|_{H^k_{\ul}}^\Lambda \right)
\|w^n(s)\|_{H^k_{\ul}}^2\dd s\\
&\quad \quad + C_1 \int_0^T\|w^{n-1}(s)\|_{H^k_{\ul}}^\Lambda
\left(\|g^{n-1}(s)\|_{H^k_{\ul}}^2 + \|w^n(s)\|_{H^k_{\ul}}^2\right)\dd s\\
&\leq C_1 T (1+(2M_0)^\Lambda)\|w^n\|_{Y_T^k}^2 + C_1 T \left((2M_0)^\Lambda
+ (4M_0)^\Lambda \right)(4M_0)^{(\Lambda-2)_+}\|w^{n-1}\|_{Y_T^k}^{2},
\end{align*}
since $\|w^{n}\|_{H^k_{\ul}}\leq 4M_0$ and similarly for $w^{n-1}$.  
If necessary, we choose $T$ smaller, so that 
\[
C_1 T \left( 1+(2M_0)^\Lambda \right) \leq \frac 1 2 \quad \mbox{ and } \quad C_1 T
\left( (2M_0)^\Lambda + (4M_0)^\Lambda \right)(4M_0)^{(\Lambda-2)_+} \leq \frac 1 4.\] 
Now we have
\begin{equation}\label{e:12}
\|g^n-g^{n-1}\|_{Y_T^k} \leq \frac 1 2 \|g^{n-1}-g^{n-2}\|_{Y_T^k}.
\end{equation}
We conclude $g^n$ is a convergent sequence and the limit $g \in Y_T^k$ is a classical solution of \eqref{e:g}.

The uniqueness of $g$ follows along the same lines. If $g_1$ and $g_2$ are two solutions of \eqref{e:g}
in $Y_T^k$ with the same initial data, then $w := g_1 - g_2$ satisfies
\begin{equation*}
\begin{split}
\partial_t w + v \cdot \nabla_x w + \kappa \vv^2 w &=
\text{tr} \left( A[g_2] D_v^2 w \right) + B[g_2] \cdot \nabla_v w + C[g_2] w\\
&\quad \quad +\text{tr} \left( A[w] D_v^2 g_2 \right) + B[w] \cdot \nabla_v g_2 + C[w] g_2,
\end{split}
\end{equation*}
and $w(0,x,v) = 0$. By the same estimate as above, and Gr\"{o}nwall's inequality, we conclude that
$\| w \|_{Y^k_T} = 0$.
\end{proof} 

Theorem \ref{t:gh} implies our first main result, Theorem \ref{t:LWP}, with $f = e^{-(\rho_0-\kappa t)\vv^2} g$.

\section{Mass-spreading}\label{s:mass}

We first state a slightly weakened form of \Cref{thm:mass}:
\begin{proposition}\label{prop:mass_weak}
	Suppose that the assumptions of \Cref{thm:mass} hold.  Suppose further that there exist $\delta_0,r_0>0$ and $x_0, v_0 \in \R^3$ such that
	\begin{equation}\label{eq:mass_core}
		\delta_0 \1_{B_{r_0}(x_0) \times B_{r_0}(v_0)}
			\leq f_{in}.
	\end{equation}
	Then there exists $T_* > 0$, depending only on $r_0$ and the upper bound of the physical quantities in~\eqref{eq:physical} such that 
 for every $0 < \underline T \leq t \leq T_*$, there exists $\nu>0$, which depends only on $\delta_0$, $r_0$, $\underline T$, $\overline T$, $|v_0|$, the physical quantities in~\eqref{eq:physical}, and $|x-x_0|$, and $\rho>0$, which depends on the same quantities, such that
\begin{equation}\label{eq:weak_lower_bound}
	f(t,x,v) \geq \nu \exp\left\{ - \rho |v|^{\max\{4,3-\gamma\}}\right\}.
\end{equation}

\end{proposition}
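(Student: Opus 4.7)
The proof proceeds by a probabilistic representation of $f$ from below, followed by a quantitative ``support-theorem'' estimate for the associated stochastic differential equation. Lemma \ref{lem:probabilistic} associates to the linearized Landau operator a stochastic process $(X_s, V_s)_{s \in [0,t]}$ satisfying $\dd X_s = V_s\,\dd s$ (up to a time reversal) with diffusion matrix $2\bar a[f]$ in the velocity variable. Since $\bar c[f] \geq 0$, the corresponding Feynman--Kac weight can be discarded in the lower bound, and combining with the core condition~\eqref{eq:mass_core} yields
\[ f(t,x,v) \;\geq\; \delta_0\, \PP^{(x,v)}\!\bigl[\,(X_0,V_0) \in B_{r_0}(x_0)\times B_{r_0}(v_0)\,\bigr]. \]
The remainder of the proof lower-bounds this hitting probability by $\nu\exp\{-\rho|v|^{\max\{4,3-\gamma\}}\}$.

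Next, I would fix a smooth deterministic control path $(\bar X_s, \bar V_s)$ with $\dot{\bar X}_s = \bar V_s$ and endpoints at $(x,v)$ and $(x_0,v_0)$, chosen so that the bounds from Lemma \ref{l:coeffs} on $\bar a[f]$ along the path are favorable. A natural two-phase choice is: a ``deceleration phase'' that drives $\bar V$ from $v$ down to a moderate velocity, followed by a ``drift phase'' at this intermediate velocity that covers the residual position displacement. The constraint $\int_0^t \bar V_s\,\dd s = x - x_0$ is used to tune the length and speed of the drift phase, while the duration $T_*$ is taken small enough that the path can be realized with velocities staying in a controlled regime.

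A quantitative support-type theorem (Girsanov change of measure combined with standard Stroock--Varadhan arguments) then bounds the probability that $(X_s, V_s)$ remains in a tube of radius $r_0/4$ around the reference path from below by
\[ \exp\!\left\{-C\int_0^t \frac{|\dot{\bar V}_s|^2}{\langle \bar V_s\rangle^{\gamma+2}}\,\dd s\right\}, \]
up to polynomial prefactors in $r_0$ and $t$, using the upper bound $|\bar a[f](s,y,w)| \lesssim \langle w\rangle^{\gamma+2}$ from Lemma \ref{l:coeffs}(a). Evaluating this integral along the two-phase reference path and optimizing over the intermediate velocity produces the exponent $\max\{4,3-\gamma\}$: the $3-\gamma$ contribution arises from decelerating against the degenerate diffusion at large $|v|$, while the $4$ comes from the Gaussian-type cost of the kinetic transport needed to cover the residual position displacement.

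The delicate part will be obtaining a clean quantitative tube estimate for a degenerate kinetic SDE with only H\"older-regular coefficients, and tracking the precise dependence of the constants on $|v|$, $|x-x_0|$, $|v_0|$, and the physical quantities~\eqref{eq:physical}. One must also verify that the optimization in the final step really realizes the claimed exponent in both regimes $\gamma \in (-1,0)$ and $\gamma \in [-3,-1]$; the polynomial prefactor from the tube estimate absorbs into $\nu$ (and thus carries the dependence on $\underline T$, $|x-x_0|$, etc.), while the exponential cost fixes $\rho$.
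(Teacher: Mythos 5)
Your general strategy (probabilistic representation of the solution via Lemma \ref{lem:probabilistic}, then a Girsanov/control-path argument to lower-bound the probability of reaching the mass core) matches the paper's approach in spirit, but there are two genuine gaps.

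First, your proposed tube cost $\exp\{-C\int_0^t |\dot{\bar V}_s|^2/\langle\bar V_s\rangle^{\gamma+2}\dd s\}$ uses the wrong ellipticity exponent and cites the wrong lemma. To bound the Girsanov exponent $-\tfrac12\int |\dot{\bar v}\,\bar\sigma^{-1}|^2\dd s$ from below you need a \emph{lower} bound on $\bar a[f]$ (so that $\bar\sigma^{-1}$ is controlled), not the upper bound from Lemma \ref{l:coeffs}(a). The relevant lower bound is Lemma \ref{l:lower}, and it is anisotropic: $\bar a e\cdot e \gtrsim \delta\vv^\gamma$ for generic $e$, and $\gtrsim\delta\vv^{\gamma+2}$ only when $e\perp v$. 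Since the deceleration phase of your control path pushes $\dot{\bar V}$ in the radial direction, the $\vv^{\gamma+2}$ bound is unavailable there and the correct cost picks up the worse factor $\vv^{-\gamma}$. Concretely, a linear radial deceleration of magnitude $|v|$ over a time $\tau$ costs $\sim |v|^{2-\gamma}/\tau$ (as in the paper's Step~2), not $\sim |v|^{-\gamma}/\tau$. Moreover, with your exponent the integral $\int_0^\tau(1-s/\tau)^{-\gamma-2}\dd s$ diverges when $\gamma\leq -3$, so the estimate as stated isn't even finite in the Coulomb case. This matters: the correct exponent computation, not the one you wrote, is what produces $\max\{4,3-\gamma\}$ in~\eqref{eq:weak_lower_bound}.

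Second, and more structurally, a single tube estimate around a two-phase control path cannot close, because the lower ellipticity for $\bar a[f]$ that you need \emph{along the entire tube} is not known a priori. Lemma \ref{l:lower} produces a lower bound on $\bar a[f](t,x,\cdot)$ only where one already knows $f(t,x,\cdot)\geq\delta\1_{B_r(v_m)}$, which initially holds only near $x_0$. This is precisely why the paper argues in several bootstrap steps: Step~1 propagates the core for a short time near $x_0$ (a crude Markov/Doob estimate, no ellipticity needed); Step~2 applies Girsanov only over trajectories confined to $B_{\underline r_0}(x_0)$ in $x$, where Step~1 gives ellipticity, to spread mass in $v$; Step~3 uses the pure transport at a specific velocity $v_{t,x}=-(x_0-x)/t$ to spread mass in $x$, again without needing ellipticity away from $x_0$; Step~4 repeats Step~2 from the new core at $(x,v_{t,x})$. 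Your proposal skips this bootstrapping entirely. You would need to explain why $\bar\sigma_{R,\eps}^{-1}$ stays under control along the far leg of your control path, and the only available bound there is the $\eps I$ regularization, which degenerates as $\eps\to0$. Fixing both issues essentially reproduces the paper's four-step scheme.
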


We first prove \Cref{prop:mass_weak}, and then show how to obtain \Cref{thm:mass} from it.


\subsection{Proof of \Cref{prop:mass_weak}}


We prove the lower bound in \Cref{prop:mass_weak} using a probabilistic representation.
In order to do this, we require an approximation process; namely, we need to cut-off $\bar a_{ij}$ at
infinity and regularize $f$. The former is to construct a unique stochastic process associated to the
equation and the latter is to apply Ito's lemma and relate the stochastic process to $f$.

We define this approximation process now. Let $\chi$ be any smooth cut-off function such that $0 \leq \chi \leq 1$, $\chi(v) = 1$ if $|v| \leq 1$, and $\chi(v) = 0$ if $|v| \geq 2$.  For any $R$ sufficiently large, define
\begin{equation}\label{eq:aR}
	\bar a_R[f](t,x,v) = \chi(v/R)\bar a[f](t,x,v)  + (1-\chi(v/R)) I,
\end{equation}
where $I$ is the identity matrix on $\R^3$. 
Then, for any $\eps \in(0,1)$, define $f_{R,\eps}$ as the solution to 
\begin{equation}\label{e:nondivergence_regularized}
\begin{cases}
\partial_t f_{R,\eps} + v\cdot \nabla_x f_{R,\eps} =\tr\left[(\bar a_R[f] +
\eps I)D_v^2 f_{R,\eps}\right] + \bar c[f] f_{R,\eps}, \quad &\text{ in } (0,T]\times
\R^3\times\R^3,\\
f_{R,\eps} = f_{in}, &\text{ on } \{t=0\} \times\R^3\times\R^3,
\end{cases}
\end{equation}
With $f$ fixed in the coefficients, the existence and uniqueness of $f_{R,\eps}$ follows from the work in Section \ref{s:LWP}.  Indeed,~\eqref{eq:aR} is the linear Landau equation, so the bounds on $f_{R,\eps}$ are, in fact, easier to obtain.  We get, immediately, that $e^{\rho \vv^2} f_{R,\eps}$ is bounded in $Y^4_T$ independently of $R$ and $\eps$.

Next, we claim that $f \in C^\alpha ([0,T]\times \R^3 \times \R^3)$ for some $\alpha \in (0,1)$.  Indeed, since $f \in L^\infty_t H^4_{x,v} \cap W^{1,\infty}_t H^2_{x,v}$, then, by the Sobolev inequality, $f\in L^p_t W^{1,p}_{x,v} \cap W^{1,6}_t L^6_{x,v}$ for any $p \in [1,\infty)$.  We may then apply the anisotropic Sobolev embedding~\cite[Theorem~2]{haskovec2009} with $p > 36/5$ to obtain the H\"older continuity of $f$.  We note that $f_{R,\e}$ inherits the same bound.

Due to the above discussion, along with the uniqueness of solutions to the linearized Landau equation in the class of functions with $e^{\rho \vv^2} f \in Y^4_T$, which follows from Theorem \ref{t:LWP}, we obtain in particular that
\begin{equation}\label{eq:f_R_to_f}
	\lim_{\eps\to0}\lim_{R\to\infty} f_{R,\eps} = f,
\end{equation}
where the above limit holds locally uniformly in $C^\alpha$, for some $\alpha \in (0,1)$.  This convergence is a key point in our argument since, in general, we obtain pointwise lower bounds for $f_{R,\eps}$ when $R$ is sufficiently large and then we take the limits $R\to \infty$ and $\eps \to 0$ to obtain lower bounds on $f$.

The main tool in the proof of the mass-pushing theorem is a probabilistic interpretation
of~\eqref{e:divergence}. In preparation for this, we set some notation and collect a few important facts.

Since $\bar a_R$ is symmetric and non-negative definite, we may find a symmetric, positive definite matrix $\bar \sigma_{R,\eps}$ such that
\[
	\bar a_R + \eps I = \bar\sigma_{R,\eps}\bar\sigma_{R,\eps}.
\]
%
We note that the upper bound on $\bar a$, i.e. that $\bar a \lesssim \vv^{2+\gamma}$ (cf.~\cite[Appendix A]{henderson2017smoothing}), which depends only on the physical quantities~\eqref{eq:physical}, yield the upper bound $\sigma_{R,\eps}\lesssim \vv^{\max\{0,1+\gamma/2\}}$, which is independent of $R$ and $\eps$.  This is important in the proof of~\Cref{prop:mass_weak}.

Further, since $f\in Y^4_T$, the matrix $\bar a_R$ is uniformly Lipschitz in all variables due to the cut-off in $v$ (see Lemma~\ref{l:coeffs}).  Since $\eps > 0$, we notice that $\bar\sigma_{R,\eps}$ is uniformly Lipschitz as well.  We note that the $\eps$ is not crucial here since the square root of a non-negative $C^2$ function is $C^{0,1}$.  The bound on the Lipschitz constant of $\bar \sigma_{R,\eps}$ depends on $\eps$ and $R$, but we use it only to guarantee the existence of a solution to our stochastic differential equation below.  Importantly, we do not use this Lipschitz bound anywhere in our estimate of $m$.


\begin{lemma}\label{lem:probabilistic}
	Suppose that the conditions of \Cref{thm:mass} hold.  For any $(t,x,v) \in [0,T]\times\R^3\times\R^3$, there is a unique solution to the stochastic differential equation
	\begin{equation}\label{eq:sde}
		\begin{cases}
			\dd V_s^{t,x,v} = \bar\sigma_{R,\eps}\left(t-s, X_s^{t,x,v}, V_s^{t,x,v}\right) \dd W_s,\\
			\dd X_s^{t,x,v} = -V_s^{t,x,v} \dd s,\\
			V_0^{t,x,v} = v, \quad X_0^{t,x,v} = x,
		\end{cases}
	\end{equation}
	for all $s \in (0,t)$, where $W_s$ is a Brownian motion in $\R^3$.  Further, we have 
	\begin{equation}\label{eq:prob_lower_bound}
		f_{R,\eps}(t,x,v)
			=\E\left[ e^{\int_0^t \bar c(t-s,X_s^{t,x,v},V_s^{t,x,v})ds} f_{in}\left(X_t^{t,x,v}, V_t^{t,x,v}\right) \right]
	\end{equation}
\end{lemma}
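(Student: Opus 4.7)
The plan is to proceed in two steps: first, establish existence and uniqueness for the SDE \eqref{eq:sde}, and second, derive the representation \eqref{eq:prob_lower_bound} via an Itô-based Feynman--Kac argument.

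For the first step, observe that by construction of $\bar a_R$ in \eqref{eq:aR} the matrix $\bar a_R + \eps I$ is smooth and uniformly positive definite on $[0,T] \times \R^6$ (with bounds depending on $R$ and $\eps$). Since the symmetric square root of a smooth positive definite matrix is smooth, $\bar\sigma_{R,\eps}$ is globally Lipschitz in $(x,v)$ with Lipschitz constant depending on $R$ and $\eps$, and the drift $-v$ in the $X$-equation is linear in $v$. Classical SDE theory (Picard iteration) then yields a unique strong solution $(X_s^{t,x,v}, V_s^{t,x,v})$ to \eqref{eq:sde} on $[0,t]$.

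For the representation, fix $(t,x,v) \in [0,T] \times \R^6$ and define
\[
M_s := \exp\Bigl(\int_0^s \bar c(t-u, X_u^{t,x,v}, V_u^{t,x,v})\,du\Bigr)\, f_{R,\eps}\bigl(t-s, X_s^{t,x,v}, V_s^{t,x,v}\bigr), \quad s \in [0,t].
\]
Since $f_{R,\eps} \in Y^4_T$ solves the linear parabolic-type equation \eqref{e:nondivergence_regularized} with smooth coefficients that are uniformly elliptic in $v$, a bootstrap via hypoelliptic regularity (or a preliminary mollification in $(t,x,v)$) shows $f_{R,\eps}$ is $C^{1,2}$ in $(t, x, v)$, so Itô's formula is applicable to $M_s$. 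Using $dX_s = -V_s\,ds$ together with the quadratic covariation $d\langle V \rangle_s = (\bar a_R + \eps I)\,ds$ encoded by the choice of $\bar\sigma_{R,\eps}$, and substituting \eqref{e:nondivergence_regularized} for $\partial_t f_{R,\eps}$ along the trajectory, the drift in $dM_s$ cancels and $M$ is reduced to the stochastic-integral part. Hence $M$ is a local martingale.

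To conclude, we localize by $\tau_N := \inf\{s \in [0,t] : |V_s| + |X_s| > N\}$, apply the optional stopping theorem, and send $N \to \infty$. The bound $\bar c \lesssim \vv^\gamma \in L^\infty$ from Lemma \ref{l:coeffs}(b) ensures the exponential factor in $M_s$ is bounded, while $f_{R,\eps}$ has Gaussian decay in $v$ by the construction in Section \ref{s:LWP}; combined with finite moments of the diffusion (from the boundedness of $\bar\sigma_{R,\eps}$), dominated convergence yields $\E[M_t] = \E[M_0]$, which is precisely \eqref{eq:prob_lower_bound}. The main technical subtlety to handle carefully is the $C^{1,2}$ regularity needed for Itô's formula, as $f_{R,\eps}$ is only a priori controlled in $Y^4_T$; this should follow either from standard parabolic bootstrap for the linear equation \eqref{e:nondivergence_regularized}, or by applying Itô to a mollification and passing to the limit using the uniform $Y^4_T$ bound to control the resulting PDE residual.
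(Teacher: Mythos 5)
Your proposal follows essentially the same route as the paper: globally Lipschitz coefficients (because of the cut-off at $|v|\sim R$ and the $\eps I$) give existence and uniqueness of the SDE via Picard iteration and Gr\"onwall, and the representation \eqref{eq:prob_lower_bound} is obtained by applying It\^o's formula to the exponential-weighted process evaluated along the backward trajectory, followed by localization. The paper omits the details of the It\^o step and simply points to~\cite{KaratzasShreve}.

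Two points of imprecision are worth flagging. First, you claim $\bar a_R + \eps I$ is \emph{smooth}; it is not, since $\bar a_R[f]$ inherits only the $H^4_{x,v}$ regularity of $f$. What is actually available (and what the paper uses) is that $\bar a_R[f]$ is uniformly Lipschitz in all variables, by Lemma~\ref{l:coeffs} combined with the $v$-cut-off; the $\eps I$ then guarantees $\bar\sigma_{R,\eps}$ is Lipschitz as well (and, as the paper notes, even without $\eps>0$ the square root of a nonnegative $C^2$ function is $C^{0,1}$). Second, your suggestion that a ``standard parabolic bootstrap'' would give $C^{1,2}$ regularity for $f_{R,\eps}$ is not quite right: the regularized equation \eqref{e:nondivergence_regularized} is still a degenerate kinetic equation (transport in $x$, diffusion only in $v$), so classical parabolic theory does not apply; moreover the coefficients are not smooth. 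Your alternate suggestion of hypoelliptic regularity is the correct instinct -- the paper obtains the $C^1_{t,x}C^2_v$ regularity needed for It\^o precisely from the kinetic Schauder estimates of~\cite{henderson2017smoothing}, using the uniform ellipticity afforded by $\bar a_R+\eps I$. You correctly identify this as the main subtlety, which is a good sign, but the paper resolves it with a specific and essential tool rather than a generic bootstrap.
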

Though the kinetic setting is non-standard, the proof of \Cref{lem:probabilistic} follows from the usual
arguments. The existence is due to a Picard iteration, the uniqueness is due to Gr\"onwall's inequality,
and the formula for $f_{R,\eps}$ requires only an application of Ito's Lemma.  As such, we omit the proof.  We refer the interested reader to~\cite{KaratzasShreve}.  It is important for the application of Ito's Lemma that $f_{R,\eps}$ be $C^1$ in $t$ and $x$ and $C^2$ in $v$.  Using the Schauder estimates of~\cite{henderson2017smoothing} along with the positive definiteness of $\bar a_R + \eps I$, we see that this is the case.  This is where the $\eps I$ term is crucial; the regularity of $f_{R,\eps}$ is required below.

We also need the following lemma, which shows that pointwise lower bounds for $f$ in a small ball give a lower ellipticity constant for the matrix $\bar a[f]$.  This implies a lower bound for $\bar \sigma_{R,\eps}$. The proof of this lemma is similar to calculations that appeared in \cite{desvillettes2000landau} and \cite{silvestre2015landau}, but there is a key difference: with pointwise lower bounds available, there is no need to use the upper bound on the entropy density $H(t,x)$, so this lemma allows us to remove the entropy assumption from our criteria for smoothness and continuation.

\begin{lemma}\label{l:lower}
	Let $g:\R^3\to\R_+$ be an integrable function such that $g \geq \delta \1_{B_r(v_0)},
	$
	for some $\delta, r>0$ and $v_0 \in \R^3$. Then $\bar a[g]$ defined by \eqref{e:a} satisfies 
	\begin{equation}\label{e:a_lower}
	 \bar a_{ij}[g](t,x,v) e_ie_j \geq c \delta \begin{cases}  (1+ |v|)^{\gamma}, & e\in \mathbb S^2,\\
	 (1+ |v|)^{\gamma+2}, & e\cdot v = 0,\end{cases}
	\end{equation}
	for unit vectors $e$, where $c>0$ is a constant depending only on $\gamma$, $v_0$, and $r$. 
\end{lemma}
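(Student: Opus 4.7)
The plan is to use the lower bound $g \geq \delta\1_{B_r(v_0)}$ to reduce to bounding an integral over a small ball in velocity space, and then to extract both announced estimates via a single symmetry computation. Using the algebraic identity $(1-(\hat w\cdot e)^2)|w|^{\gamma+2} = |w|^\gamma |w_\perp|^2$ with $w_\perp := w - (w\cdot e)e$, substituting $u=v-w$ followed by $u = v_0+\xi$, and dropping all mass outside $B_r(v_0)$, one obtains
\begin{equation*}
\bar a_{ij}[g]e_ie_j \,\geq\, a_\gamma \delta \int_{|\xi|<r} |\bar v-\xi|^\gamma\, |(\bar v-\xi)_\perp|^2 \dd\xi,
\end{equation*}
where $\bar v := v-v_0$. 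Since $|\bar v-\xi| \leq (1+|v_0|+r)(1+|v|)$ for $|\xi|<r$ and $\gamma<0$, the first factor is uniformly bounded below by $c_1(1+|v|)^\gamma$ with $c_1 = c_1(\gamma,v_0,r) > 0$.

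The crux is a symmetry identity for the second factor: expanding $(\bar v-\xi)_\perp = \bar v_\perp - \xi_\perp$, the cross term vanishes by odd symmetry of $B_r$, giving
\begin{equation*}
\int_{|\xi|<r} |(\bar v-\xi)_\perp|^2 \dd\xi \,=\, |B_r|\,|\bar v_\perp|^2 \,+\, \int_{|\xi|<r} |\xi_\perp|^2 \dd\xi,
\end{equation*}
a sum of two non-negative terms in which the second is a strictly positive constant $c_2 = c_2(r) > 0$ independent of $\bar v$ and $e$. The main conceptual point of the proof is that these two summands drive the two cases separately: the $c_2$ term handles generic $e$, while the $|\bar v_\perp|^2$ term produces the improved growth rate when $e\cdot v = 0$.

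For the general case $e\in\mathbb S^2$, discarding the first summand gives $\int |(\bar v-\xi)_\perp|^2\dd\xi \geq c_2$, and combining with the lower bound on $|\bar v-\xi|^\gamma$ yields the first bound in \eqref{e:a_lower}. For the perpendicular case $e\cdot v = 0$, the identity $\bar v\cdot e = -v_0\cdot e$ gives $|\bar v_\perp|^2 = |\bar v|^2 - (v_0\cdot e)^2 \geq |\bar v|^2 - |v_0|^2$; when $|v| \geq 4|v_0|+1$ this produces $|\bar v_\perp|^2 \gtrsim (1+|v|)^2$, which combines with the estimate on $|\bar v-\xi|^\gamma$ to give the desired rate $(1+|v|)^{\gamma+2}$. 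For $|v| \leq 4|v_0|+1$, the quantity $(1+|v|)^{\gamma+2}$ is bounded above by a constant depending only on $v_0,\gamma$, so the general-case bound already established suffices after adjusting the constant. Tracking dependencies so that all constants depend only on $\gamma$, $v_0$, and $r$ as required is a bookkeeping task, but the only genuine subtlety is the symmetry identity that cleanly splits the integrand into the piece needed for each case.
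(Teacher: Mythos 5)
Your proof is correct and arrives at the same bounds, but the route is genuinely different from the one in the paper. The paper's argument works with the quantity $\int_{B_r(v)}\sin^2\theta_{e,w}\,|w|^{\gamma+2}\,dw$ directly: it introduces the level set $A_\eps = \{w\in B_r(v): |w\cdot e|^2 \geq |w|^2(1-\eps)\}$ where $\sin^2\theta_{e,w}$ is small, chooses $\eps_0$ so that $|A_{\eps_0}| = |B_r|/2$, and then estimates $\eps_0 \approx |v|^{-2}$ for the worst-case direction $e = v/|v|$ by a geometric argument about the cylinder $A_{\eps_0}\cap B_r(v)$ having bounded volume. The rate $(1+|v|)^\gamma$ then falls out as $\eps_0 \cdot |v|^{\gamma+2}$. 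Your proof instead uses the algebraic identity $(1-(\hat w\cdot e)^2)|w|^{\gamma+2} = |w|^\gamma|w_\perp|^2$, pulls the $|w|^\gamma$ factor out by a uniform pointwise lower bound, and then uses the parallel-axis-type expansion $\int_{|\xi|<r}|(\bar v-\xi)_\perp|^2\,d\xi = |B_r|\,|\bar v_\perp|^2 + \int_{|\xi|<r}|\xi_\perp|^2\,d\xi$, with the cross term vanishing by odd symmetry. This decomposition makes the two cases transparent: the constant term $\int|\xi_\perp|^2$ gives the $(1+|v|)^\gamma$ rate for arbitrary $e$ (and is precisely what survives when $e$ is aligned with $\bar v$), while the $|\bar v_\perp|^2$ term carries the extra factor of $(1+|v|)^2$ when $e\perp v$. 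Your argument avoids the measure-of-level-sets estimate entirely and is arguably cleaner; it also handles general $v_0$ directly rather than reducing to $v_0=0$ "by similarity" as the paper does.

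One small point worth making explicit when you write this up: the decoupling step $|\bar v-\xi|^\gamma \geq c_1(1+|v|)^\gamma$ on the domain $|\xi|<r$ uses $\gamma<0$ together with the \emph{upper} bound $|\bar v-\xi|\leq |v|+|v_0|+r \leq (1+|v_0|+r)(1+|v|)$; and the cross-term cancellation uses that $\xi\mapsto \xi_\perp$ is linear and $B_r$ is centered at the origin, so $\int_{|\xi|<r}\xi_\perp\,d\xi = 0$. Both are correct as you state them, but they are the two places where a reader will want to pause.
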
 
\begin{proof}
We consider the case $v_0 = 0$, but the general case follows similarly.   For any $e\in \mathbb S^2$, \eqref{e:a} implies 
	\begin{equation}\label{eq:c1001}
	\bar a_{ij}[g] e_i e_j = a_{\gamma} \int_{\R^3} \left(1 - \frac{(w\cdot e)^2}{|w|^2}\right) |w|^{\gamma+2} g(v-w) \dd w
	\gtrsim \delta \int_{B_r(v)} \sin^2 (\theta_{e,w})|w|^{\gamma+2} \dd w,
	\end{equation}
	where $\theta_{e,w}$ is the angle between $e$ and $w$. Let $A_\e = \{w \in B_r(v) : |w \cdot e|^2 \geq |w|^2(1 - \e)\}$. Since $\sin \theta_{e,w}$ is close to zero in $A_{\eps}$, we want to avoid that set to derive a lower bound.  We can assume $e = v/|v|$, since that is the worst case, i.e. the case where $A_\eps$ is largest for a given $\eps$. Clearly, there exists $\eps_0 \in(0,1)$ such that $|A_{\e_0}| = |B_r|/2$. With \eqref{eq:c1001}, this already implies $\bar a_{ij}[g]e_i e_j \gtrsim\delta(1+|v|)^\gamma$ for small $|v|$. For  $|v|$ large (compared to $r$), since $A_{\eps_0}$ contains a cylinder of diameter $\approx \sqrt{\eps_0}|v|$ and height $2r$, and this cylinder must have volume bounded independently of $|v|$, we conclude $\eps_0 \approx  |v|^{-2}$.
	Hence, from~\eqref{eq:c1001},
	\[
	\bar a_{ij}[g]e_ie_j
	\gtrsim  \delta \int_{B_r(v) \setminus A_{\e_0}} \e_0 |w|^{\gamma + 2} dw
	\gtrsim \delta \e_0 |v|^{\gamma+2} \frac{|B_r(v)|} 2  
	\gtrsim \delta  |v|^{-2}|v|^{\gamma+2}
	\gtrsim \delta  |v|^\gamma.
\]

	
	To conclude the proof, we need only consider the case when $e \cdot v = 0$ and $|v| \geq 3r$.  In this case, for any $w\in B_r(v)$, we have $w\cdot e \leq r$ and $|w| \geq |v|-r \geq 2|v|/3$.  Thus, $(w\cdot e)^2/|w|^2 \leq \frac 1 3$, which, in turn, implies $\sin^2\theta_{e,w} \geq \frac 4 5$.  From~\eqref{eq:c1001}, we have, as desired,
	\[
		\bar a_{ij}[g] e_i e_j \gtrsim \delta \frac 13 (|v|-r)^{\gamma+2}|B_r(v)| \gtrsim \delta r^3 |v|^{\gamma+2}.
	\]
%
%
\end{proof}


We are now in a position to prove \Cref{prop:mass_weak}.
\begin{proof}[Proof of \Cref{prop:mass_weak}]
We prove this in four steps. 
We first show that the initial bound is preserved on a slightly smaller set for all $t\in[0, T_*]$.
Then we crucially use Lemma \ref{l:lower} to conclude 
the diffusivity matrix $\bar \sigma_{R,\eps}$ is positive definite for $t\in[0, T_*]$ and $x$ near $x_0$. 
This allows us to spread mass to any $v$.  In the third step, we use the fact that we
have mass at any velocity to use the pure transport term to spread mass to any $x$, though only for a small
range in $v$.  Finally, for a fixed $x$, we use the small mass clustered at some of the velocities to
repeat our earlier step: we obtain a lower bound on the viscosity and spread mass to all $v$.  These four steps give us a lower bound on $f$ for all $t\in[0,T_*]$.  

Before beginning, we asssume, without loss of generality, that $\delta_0 \leq 1$.  We also note that all estimates depend on $\gamma$ and the physical quantities~\eqref{eq:physical}, though we often do not mention this dependence explicitly in the sequel.  Finally, we denote $m(t,x,v)$ to be a positive function satisfying the properties as in the statement of Proposition~\ref{prop:mass_weak} that changes line-by-line.

\subsubsection*{Step 1: Preserving a mass core for short times:} The first step is showing that the lower bound on the mass at $(x_0,v_0)$ given by~\eqref{eq:mass_core} remains for a short time.  

We make this explicit.  Fix any $R \geq 2(|v_0| + r_0)$ and $\eps \in(0,1)$.  Let $\underline r_0 = \min\left\{r_0/2, \sqrt r_0\right\}$.  We claim that there exists $T_*>0$ depending only on $v_0$ and $r_0$ such that, for all $t \in [0,T_*]$,
\begin{equation}\label{eq:mass_core_time1}
		f_{R,\eps}(t,x,v) \geq \frac{\delta_0}{2} \1_{B_{\underline r_0}(x_0)\times B_{\underline r_0}(v_0)}(x,v).
\end{equation}
To see this, define $\tau_{\underline r_0} = \inf\{t \in [0,T] : |V_t^{t,x,v} - v_0| > \underline r_0\}$ and use \Cref{lem:probabilistic} to obtain
\begin{equation}\label{eq:chris1}
\begin{split}
	f_{R,\eps}(t,x,v)
		&\geq \E\left[ \1_{\{\tau_{\underline r_0} > t\}} f_{in}(X_t^{t,x,v}, V_t^{t,x,v})\right]
		\geq \delta_0\E\left[ \1_{\{\tau_{\underline r_0} > t\}} \1_{\{X_t^{t,x,v} \in B_{r_0}(x_0)\}} \1_{\{V_t^{t,x,v} \in B_{r_0}(v_0)\}}\right]\\
		&= \delta_0\PP\left\{\tau_{\underline r_0} > t, X_t^{t,x,v} \in B_{r_0}(x_0), V_t^{t,x,v} \in B_{r_0}(v_0) \right\}
\end{split}
\end{equation}
We make the following crucial observation.  By our choice of $\tau_{\underline r_0}$, we consider only trajectories $V_s^{t,x,v}$ that never leave $B_{\underline r_0}(v_0) \subset B_R(0)$.  As a result, $(X_s^{t,x,v}, V_s^{t,x,v})$ and thus our estimates in this step are independent of $R$ (cf.~\eqref{eq:sde} and the definition of $\bar \sigma_{R,\eps}$).

Let $T_2 > 0$ be a constant to be determined and define
\begin{equation}\label{eq:T_0}
	T_* := \min\left\{T,\underline r_0, \frac{1}{2}\frac{r_0 - \underline r_0}{|v_0| + \underline r_0}, T_2\right\}.
\end{equation}
If $|v-v_0|, |x-x_0|\leq \underline r_0$ and $t \in[0,T_*]$ then we claim that
\begin{equation}\label{eq:chris2a}
\begin{split}
	\PP\left\{\tau_{\underline r_0} > t, X_t^{t,x,v} \in B_{r_0}(x_0), V_t^{t,x,v} \in B_{r_0}(v_0) \right\}
		&\geq \PP(\tau_{\underline r_0} > t\}
		= \PP\left\{\max_{0 \leq s \leq t} |V_s^{t,x,v} - v| < \underline r_0\right\}.
\end{split}
\end{equation}
Indeed, suppose that $|V_s^{t,x,v} - v| < \underline r_0$ for all $s\in[0,T_*]$.
First, we observe that $|V_s^{t,x,v}| \leq |v| + \underline r_0 \leq |v_0| + r_0 < R$.  Hence $\tau_{\underline r_0} > t$.  Second,
\[
	|X_t^{t,x,v} - x_0|
		\leq \left|\int_0^t V_s^{t,x,v} \dd s\right| + |x-x_0|
		< \int_0^t \left(|v_0| + \underline r_0\right)\dd s + \underline r_0
		\leq T_* |v_0| + (T_* +1) \underline r_0
		\leq r_0.
\]
Here we used the third term in the definition ~\eqref{eq:T_0} of $T_*$ along with the fact that $\underline r_0 \leq r_0/2$. 
Third, it is clear that $V_t^{t,x,v} \in B_{r_0}(v_0)$.  Hence,~\eqref{eq:chris2a} follows.

From~\eqref{eq:chris2a}, we conclude that
\begin{equation}\label{eq:chris2}
	\PP\left\{\tau_{\underline r_0} > t, X_t^{t,x,v} \in B_{r_0}(x_0), V_t^{t,x,v} \in B_{r_0}(v_0) \right\}
		\geq 1 - \PP\left\{\max_{0 \leq s \leq t} |V_s^{t,x,v} - v| \geq \underline r_0\right\}.
\end{equation}
Hence,~\eqref{eq:mass_core_time1} follows from an upper bound of the last term on the right-hand side of~\eqref{eq:chris2}.  To obtain such an upper bound we first apply Markov's inequality:
\[
	\PP\left\{\max_{0 \leq s \leq t} |V_s^{t,x,v} - v| >\underline r_0\right\}
		=\PP\left\{\max_{0 \leq s \leq t} |V_s^{t,x,v} - v|^2 > \underline r_0^2\right\}
		\leq \underline r_0^{-2} \E\left[ \max_{0 \leq s \leq t} |V_s^{t,x,v} - v|^2\right].
\]
In order to bound the term on the right hand side, we argue as follows.   Use Doob's inequality and the Ito Isometry to obtain
\[\begin{split}
	\E\left[ \max_{0 \leq s \leq t} |V_s^{t,x,v} - v|^2\right]
		&\lesssim \E\left[|V_t^{t,x,v} - v|^2\right]
		= \E\left[ \left| \int_0^t \bar\sigma_{R,\eps}(X_s^{t,x,v}, V^{t,x,v}_s) dB_s\right|^2\right]\\
		&= \E\left[\int_0^t |\bar\sigma_{R,\eps}(X_s^{t,x,v}, V_s^{t,x,v})|^2 \dd s \right].
\end{split}\]
Using the asymptotics of $\bar \sigma_{R,\eps}$ and the fact that $\eps < 1$,
we now obtain bounds on $\E[\max_s |V_s^{t,x,v}-v|^2]$. 
It is useful to consider two cases separately:

First, if $\gamma \in [-3,-2]$, then  $\bar \sigma_{R,\eps}$ is bounded above independently of $X_s^{t,x,v}$ and $V_s^{t,x,v}$.  Hence, we see that
\begin{equation}\label{eq:c110}
	\E\left[ \max_{0 \leq s \leq t} |V_s^{t,x,v} - v|^2\right]
		\leq C_0 t,
\end{equation}
where $C_0$ represents the implied constant above and the upper bound on $\bar \sigma_{R,\eps}$.  In this case,
\[
	\PP\left\{\max_{0 \leq s \leq t} |V_s^{t,x,v} - v| >\underline r_0\right\}
		\leq \frac{C_0t}{\underline r_0^2}.
\]
Letting $T_2 = \underline r_0^2/2C_0$ the above is bounded by $1/2$.  Combining this with~\eqref{eq:chris1} and~\eqref{eq:chris2}, we obtain, for all $(t,x,v) \in [0,T_*]\times B_{\underline r_0}(x_0)\times B_{\underline r_0}(v_0)$,
\[
	f_{R,\eps}(t,x,v) \geq \frac{\delta_0}{2}.
\]
From our definitions of $T_*$ and $T_2$, it is clear that $T_* = T$ if $r_0$ is sufficiently large, depending only on $|v_0|$.  This finishes the proof of the claim in the case $\gamma \leq -2$.

On the other hand, if $\gamma \in(-2,0)$ then
\[\begin{split}
	\E\left[ \max_{0 \leq s \leq t} |V_s^{t,x,v} - v|^2\right]
		&\lesssim \E\left[\int_0^t (1 + |V_s^{t,x,v}|^{1 + \gamma/2})^2 \dd s \right]
		\lesssim t \E\left[1 + \max_{0\leq s \leq t} |V_s^{t,x,v} - v|^{2+\gamma} + |v|^{2+\gamma}\right].
\end{split}\]
Above, we used Lemma~\ref{l:coeffs} in the first inequality.  We use Young's inequality on the second term in the expectation, obtaining
\[
	\E\left[ \max_{0 \leq s \leq t} |V_s^{t,x,v} - v|^2\right]
		\leq C_0t \left(1 + (|v_0|+r_0)^{2+\gamma} \right) + (C_0 t)^\frac{2}{|\gamma|} + \frac{1}{2} \E\left[\max_{0\leq s \leq t} |V_s^{t,x,v} - v|^2\right],
\]
where $C_0$ again comes from the implied constant above.  After increasing $C_0$, this may be re-arranged to give
\begin{equation}\label{eq:c111}
	\E\left[ \max_{0 \leq s \leq t} |V_s^{t,x,v} - v|^2\right]
		\leq C_0t \left(1 + (|v_0|+\underline r_0)^{2+\gamma} \right) + (C_0 t)^\frac{2}{|\gamma|}.
\end{equation}
Hence, we obtain
\begin{equation}\label{eq:c112}
	\PP\left\{\max_{0 \leq s \leq t} |V_s^{t,x,v} - v| >\underline r_0\right\}
		\leq \frac{C_0T_2}{\underline r_0^{2}} \left(1 + (|v_0|+\underline r_0)^{2+\gamma} \right) + \frac{(C_0 T_2)^\frac{2}{|\gamma|}}{\underline r_0^{2}}.
\end{equation}
We now let
\[
	T_2 = \frac{1}{4} \min\left\{\frac{\underline r_0^2}{C_0 (1 + (|v_0| + \underline r_0)^{2+\gamma})}, \frac{\underline r_0^{|\gamma|}}{C_0}\right\}.
\]
Then the right hand side of~\eqref{eq:c112} is smaller than $1/2$.  Using this along with~\eqref{eq:chris1} and~\eqref{eq:chris2}, we once again obtain that, for all $(t,x,v) \in [0,T_*]\times B_{\underline r_0}(x_0)\times B_{\underline r_0}(v_0)$,
\[
	f_{R,\eps}(t,x,v) \geq \frac{\delta_0}{2}.
\]
Using the definitions of $T_*$ and $T_2$, we again note that $T_* = T$ if $r_0$ is sufficiently large.

Our bounds do not depend on $R$ and $\eps$.  Hence, taking $R\to\infty$ and $\eps \to 0$, we obtain
\begin{equation}\label{eq:mass_core_time2}
		f(t,x,v) \geq \frac{\delta_0}{2} \1_{B_{\underline r_0}(x_0)\times B_{\underline r_0}(v_0)}(x,v),
\end{equation}
which finishes the proof of the Step 1.


\medskip

\subsubsection*{Step 2: Spreading mass in $v$ for $x\sim x_0$:}
The next step is to show that the mass of $f$ instantaneously ``spreads out'' in $v$.  We require this
to ``spread out'' the mass in $x$ in Step 3, below.  We make this explicit.
Fix $0 < \underline T \leq t \leq T_*$.  For any $x \in B_{\underline r_0/2}(x_0)$, 
\begin{equation}\label{eq:step2}
	f(t,x,\cdot) \geq \nu \exp\left\{ - \rho |v|^{\max\{4,3-\gamma\}}\right\},
\end{equation}
where $\e$ and $\rho$ are as in the statement of the proposition.

Applying \Cref{l:lower} and using the definition of $\sigma_{R,\eps}$, we find that there exists $\lambda_0$, depending only on $\delta_0$ and $r_0$ such that
\begin{equation}\label{eq:ellipticity_bound}
	\bar \sigma_{R,\eps}(t,x,v) \geq \lambda_0 \vv^{\gamma/2}.
\end{equation}


Fix any $R > 2(|v| + |v_0| + r_0)$ and any $\eps \in (0,1)$.   Let $t_c =  \underline r_0 / (4\max\{|v|,|v_0|,N_{\underline r_0}|v|^{2-\gamma}\})$, for $N_{\underline r_0}\geq 1$ to be determined.  We first prove the claim when $t < t_c$.  Then we let $\tau_{2|v|} = \inf\{s>0 : |V_s^{t,x,v}|\geq 2\max\{|v|,|v_0|,1\}\}$ and notice that
\[
	f_{R,\eps}(t,x,v)
		\geq \E[1_{\{\tau_{2|v|} > t\}} f_{in}(X_s^{t,x,v},V_s^{t,x,v})]
		\geq \frac{\delta_0}{2} \PP\left\{\tau_{2|v|}>t, (X_t^{t,x,v},V_t^{t,x,v})\in B_{\underline r_0}(x_0)\times B_{\underline r_0}(v_0)\right\}.
\]
Since $t< t_c$ then it follows that, if $t < \tau_{2|v|}$, $X_t^{t,x,v} \in B_{\underline r_0}(x_0)$.  Hence the above simplifies to
\begin{equation}\label{eq:c113}
	f_{R,\eps}(t,x,v)
		\geq \frac{\delta_0}{2} \PP\left\{\tau_{2|v|}>t, V_t^{t,x,v}\in B_{\underline r_0}(v_0)\right\}.
\end{equation}
Define $\bar v: [0,t] \to \R^d$ as $\bar v(s) = v + (s (v_0-v))/t$.  Then~\eqref{eq:c113} further reduces to
\begin{equation}\label{eq:c114}
	f_{R,\eps}(t,x,v)
		\geq \frac{\delta_0}{2} \PP\left\{ \max_{0\leq s \leq t} |V_s^{t,x,v} - \bar v(s)| \leq \underline r_0\right\}.
\end{equation}

In order to obtain a lower bound on the right hand side of~\eqref{eq:c114}, we use Girsanov's transform to
change probability measures to $\Q$ such that $Y_s := V_s^{t,x,v} - \bar v(s)$ and
\[
	\dd Y_s = \bar\sigma_{R,\eps}(X_s^{t,x,v}, V_s^{t,x,v}) \dd \hat B_s
\]
where $\hat B_s$ is a $\Q$-Brownian motion. Let $A_{\underline r_0} = \{\max_{0\leq s\leq t} |Y_s| \leq \underline r_0\}$.  Then we have that
\[\begin{split}
	\PP\left\{ \max_{0\leq s \leq t} |V_s^{t,x,v} - \bar v(s)| \leq \underline r_0\right\}
		&= \E_\Q\left[\exp\left\{ - \int_0^t \dot{\bar v}  \bar \sigma_{R,\eps}^{-1} \dd \hat B_s  - \frac{1}{2}\int_0^t |\dot{\bar v} \sigma_{R,\eps}^{-1}|^2 \dd s \right\} \1_{A_{\underline r_0}}\right]\\
		&\geq e^\frac{-|v_0-v|^2}{C_0t\lambda_0^2(1+|v_0|+|v|+\underline r_0)^{\gamma}}\E_\Q\left[\exp\left\{ - \int_0^t \dot{\bar v}  \bar \sigma_{R,\eps}^{-1} \dd \hat B_s\right\} \1_{A_{\underline r_0}}\right].
\end{split}
\]
Here $C_0$ is the implied constant in~\eqref{eq:ellipticity_bound}. 
Let $q_{\underline r_0} = \Q(A_{\underline r_0})$.  Then, using Jensen's inequality, we obtain
\begin{equation}\label{eq:c117}
\begin{split}
	\PP\left\{ \max_{0\leq s \leq t} |V_s^{t,x,v} - \bar v(s)| \leq \underline r_0\right\}
		&\geq e^\frac{-|v_0-v|^2}{C_0t\lambda_0^2(1+|v_0|+|v|+\underline r_0)^{\gamma}} q_{\underline r_0} \E_\Q\left[ \1_{A_{\underline r_0}} q_{\underline r_0}^{-1} \exp \int_0^t \dot{\bar v} \bar \sigma_{R,\eps}^{-1} \dd \hat B_s \right]\\
		&\geq e^\frac{-|v_0-v|^2}{C_0t\lambda_0^2(1+|v_0|+|v|+\underline r_0)^{\gamma}} q_{\underline r_0} \exp \E_\Q\left[\1_{A_{\underline r_0}} q_{\underline r_0}^{-1} \int_0^t \dot{\bar v} \sigma_{R,\eps}^{-1} \dd \hat B_s\right].
\end{split}
\end{equation}
We note that, after fixing a sufficiently large $N_{\underline r_0}$ depending only on $|v_0|$, $\underline r_0$, and the physical quantities~\eqref{eq:physical}, we may obtain a lower bound for $q_{\underline r_0}$ by arguing exactly as in Step 1.  On the other hand,  letting $\tilde R = |v| + |v_0| + \underline r_0$, 
\begin{equation}\label{eq:c116}
\begin{split}
	\left|\E_\Q\left[ \1_{A_{\underline r_0}} \int_0^t \dot{\bar v} \sigma_{R,\eps}^{-1} \dd \hat B_s\right]\right|
		&= \left|\E_\Q\left[ \1_{A_{\underline r_0}} \int_0^t \dot{\bar v} \sigma_{\tilde R,\eps}^{-1} \dd \hat B_s\right]\right|
		\leq \Q(A_{\underline r_0})^{1/2} \E_\Q\left[ \1_{A_{\underline r_0}} \int_0^t |\dot{\bar v} \sigma_{\tilde R,\eps}^{-1}|^2 \dd s\right]^{1/2}\\
		&\leq q_{\underline r_0}^{1/2} \frac{|v_0-v|}{C_0\sqrt t\lambda_0 (1 + |v_0|+|v|+\underline r_0)^{\gamma/2}}
		\leq q_{\underline r_0} + \frac{|v_0-v|^2}{C_0^2 t \lambda_0^2 (1 + |v_0| + |v| + \underline r_0)^\gamma}
\end{split}
\end{equation}
where we used H\"older's inequality and the Ito isometry in the first inequality, and we used the lower bound~\eqref{eq:ellipticity_bound} in the second inequality.

Let $\beta = \max\{3-\gamma, 4\}$.  Combining the discussion regarding $q_{\underline r_0}$ with~\eqref{eq:c116},~\eqref{eq:c117}, and~\eqref{eq:c114}, and using the fact that $|v|^{2-\gamma} t^{-1} \leq |v|^{\beta} + C_t$ for some $C_t$ depending only on $t$ and $\gamma$, we obtain 
\[
	f_{R,\eps}(t,x,v)
		\geq \nu \exp\left\{ - \rho \frac{|v|^{2-\gamma}}{t}\right\}
		\geq \nu' \exp\left\{ - \rho' |v|^{\beta}\right\},
\]
for some $\nu'$ and $\rho'$ with the same dependences of $\nu$ and $\rho$. Since all estimates are independent of $R$ and $\eps$, we may conclude the proof of Step 2 in this case by taking the limit as $R\to \infty$ and $\eps \to 0$.

If $t \geq t_c$, we may simply translate the argument in time and use the semi-group property.  Hence,
\[
	f_{R,\eps}(t,x,v)
		\geq \nu \exp\left\{ - \rho \frac{|v|^{2-\gamma}}{t_c}\right\}
		\geq \nu' \exp\left\{ - \rho' |v|^{\beta}\right\}.
\]
for some $\nu'$ and $\rho'$ with the same dependencies as $\nu$ and $\rho$.  We conclude exactly as above.  This establishes~\eqref{eq:step2}.

\medskip

\subsubsection*{Step 3: Spreading mass in $x$ for select velocities:}
We now obtain a lower bound on $f$ for all $t$ and $x$ and some subset of velocities.  Specifically, we aim to prove that, for all $t \in (0,T_*]$ and all $x \in \R^3$ there exist $v_{t,x} \in \R^3$, depending only on $|v_0|$, $r_0$, $t$, and $|x-x_0|$, and $\delta_{t,x} > 0$ depending on the same quantities and also $\delta_0$, such that, for all $v\in B_{\underline r_0/4t}(v_{t,x})$,
\begin{equation}\label{eq:small_time_bound1}
	f(t,x,v) \geq \delta_{t,x}.
\end{equation}
Further, $v_{t,x}$ and $\delta_{t,x}$ depend continuously on $(t,x) \in (0,T_*]\times \R^3$. 
In particular, this provides a lower bound of the form~\eqref{eq:mass_core} for $f(t,\cdot,\cdot)$ at $(x,v_{t,x})$ for any $t>0$ and $x \in \R^3$. 

To establish~\eqref{eq:small_time_bound1}, fix any $t \in(0,T_*]$ and $R>0$ to be determined.  Again notice that up to shifting in time, we may assume that $t< t_0$, for $t_0$ to be determined below, and that the lower bound from Step 2~\eqref{eq:step2} holds for the initial data. 
Define $v_{t,x} = -(x_0 -x)/t$.   Fix any $\eps \in (0,1)$ and any $R \geq 2(|v_{t,x}| + \underline r_0/2t + 1)$, and let 
\[
	\tau_{\underline r_0/t} = \inf\left\{s : |V_s^{t,x,v} - v_{t,x}| \geq \frac{\underline r_0}{4t}\right\}.
\]
Fix any $v \in B_{\underline r_0 /4t}(v_{t,x})$ and, using~\eqref{eq:step2}, there exists $\tilde m_0>0$ depending on $\delta_0$, $|v_0|$, $r_0$, $t$, and $|x-x_0|$ such that
\begin{equation}\label{eq:chris5}
\begin{split}
	f_{R,\eps}(t,x,v)
		&\geq \tilde m_0 \E[ \1_{\{\tau_{\underline r_0/t} > t\}}\1_{\left\{X_t^{t,x,v} \in B_{\underline r_0/2}(x_0)\right\}} \1_{\left\{V_{t}^{t,x,v} \in B_{\underline r_0/2t}(v_{t,x})\right\}}]
		= \tilde m_0 \PP\left\{\tau_{\underline r_0/t} > t\right\}.
\end{split}
\end{equation}
In the equality above, we used that if $\tau_{\underline r_0/t} > t$ then $V_t^{t,x,v} \in B_{\underline r_0/2t}$ and
\[
	|X_t^{t,x,v}  - x_0|= \left|x - x_0 + \int_0^t V_s^{t,x,v} ds\right|
		= \left|\int_0^t (V_s^{t,x,v} - v_{t,x}) ds\right|
		< \frac{\underline r_0}{2}.
\]

Hence, in order to finish the proof of Step 3, we need only obtain a lower bound on
\begin{equation}\label{eq:c119}
	\PP\{\tau_{\underline r_0/t} > t\}
		= \PP\left\{ \max_{0\leq s \leq t} |V_s^{t,x,v} - v_{t,x}| \leq \frac{\underline r_0}{2t}\right\}
		= 1 - \PP\left\{ \max_{0\leq s \leq t} |V_s^{t,x,v} - v_{t,x}| > \frac{\underline r_0}{2t}\right\}.
\end{equation}
Since the estimates are significantly simpler when $\gamma \leq -2$, we only show the case when $\gamma > -2$.  Indeed, we follow the work in Step 1 and use the estimate~\eqref{eq:c111} to obtain
\begin{equation}\label{eq:c115}
	\PP\left\{ \max_{0\leq s \leq t} |V_s^{t,x,v} - v_{t,x}| > \frac{\underline r_0}{2t}\right\}
		\leq \frac{t^2}{\underline r_0^2}\left(C_0 t \left(1 + \left|\frac{x-x_0}{t}\right| + \frac{\underline r_0}{t}\right)^{2+\gamma} + C_\gamma (C_0 t)^\frac{2}{|\gamma|}\right),
\end{equation}
where $C_0$ is a positive universal constant.  We are now in a position to define $t_0$.  Let
\[
	t_0 = \frac{1}{4} \min\left\{
		\left(\frac{\underline r_0^2}{3^{2+\gamma}C_0}\right)^\frac13,
		\left(\frac{\underline r_0^2}{3^{2+\gamma}C_0 |x-x_0|^{2+\gamma}}\right)^\frac{1}{|\gamma|},
		\frac{\underline r_0}{3^\frac{2+\gamma}{|\gamma|}C_0^\frac{1}{|\gamma}},
		\left(\frac{\underline r_0^2}{C_\gamma C_0^\frac{2}{|\gamma|}}\right)^\frac{|\gamma|}{2 + |\gamma|} \right\},
\]
and then~\eqref{eq:c115} implies that
\[
	\PP\left\{ \max_{0\leq s \leq t} |V_s^{t,x,v} - v_{t,x}| > \frac{\underline r_0}{2t}\right\}
		\geq \frac{1}{2}.
\]
Plugging this into~\eqref{eq:c119} and then~\eqref{eq:chris5} yields the lower bound for $f_{R,\eps}$.  Taking the limits $R\to \infty$ and $\eps \to 0$ yields the lower bound for $f$.  The continuity of this lower bound and of $v_{t,x}$ is clear from the proof.  This finishes the proof of Step 3.

\medskip

\subsubsection*{Step 4: Spreading mass in $v$ for all $x$:}

Since the lower bound of $f$ obtained in Step 3 holds locally uniformly in $x$, we may repeat the arguments of Steps 1 and 2 in order to see that, for any $(t,x,v) \in (0,T_*]\times \R^3\times\R^3$, there exists $m(t,x,v)>0$, depending only on $t$, $|v|$, $|v_0|$, $r_0$, $|x-x_0|$, and $\delta_0$, such that $f(t,x,v) \geq m(t,x,v)$.  This finishes the proof.
\end{proof}

\subsection{Using \Cref{prop:mass_weak} to obtain \Cref{thm:mass}}

\begin{proof}[Proof of \Cref{thm:mass}.(i)]
First, note that the assumed regularity of $f$ implies that $f(0,\cdot,\cdot)$ is H\"older continuous (see the discussion after formula \eqref{e:nondivergence_regularized}) so that \eqref{eq:mass_core} is satisfied 
for some $\delta_0$, $r_0$, $x_0$, $v_0$. Next, notice that applying \Cref{prop:mass_weak} one time implies
that $f$ is positive everywhere for some small time interval $(0,T_*]$.  At which point, we may re-apply
\Cref{prop:mass_weak} on the time interval $(T_*,T]$.  This is possible because, by choosing $\delta_{T_*}$
small enough, we may find $r_{T_*}$ arbitrarily large  
such that
\begin{equation*}
	\delta_{T_*} \1_{B_{r_{T_*}}(0)\times B_{r_{T_*}}(0)}(x,v) \leq f(T_*,x,v).
\end{equation*}
The sub-Gaussian lower bound then follows directly from \Cref{prop:mass_weak}, concluding the proof of \Cref{thm:mass}.(i).
\end{proof}

\begin{proof}[Proof of \Cref{thm:mass}.(ii)]
Arguing as above, and using the well-distributed initial data, we immediately obtain $\delta_1$, depending only on $R$, $\delta$, $r$, $\underline T$, $\overline T$, and the quantities in~\eqref{eq:physical} such that, for all $(t,x,v) \in [\underline T/2, T]\times \R^3 \times B_1(0)$, $f(t,x,v) \geq \delta_1.$ 
Define 
\[\underline f(t,x,v) = \delta_1 \exp\{- \beta(t) |v|^{2-\gamma}\},\] for $\beta: \R_+ \to \R_+$ to be determined.  We claim that $\underline f$ is a sub-solution to the linear Landau equation; that is, letting $L:= \partial_t + v\cdot \nabla_x - \tr(\bar a[f] D^2_v \cdot) - \bar c[f]$, we claim that $L \underline f \leq 0$ in $[\underline T,T]\times \R^3\times \{|v|\geq 1\}$.

By a direct computation, we obtain
\[\begin{split}
	L\underline f
		&= \underline f \left[
			- \beta' |v|^{2-\gamma} - \bar a_{ij}[f]\left((2-\gamma)^2 \beta^2 |v|^{-2\gamma} v_iv_j - \beta (2-\gamma) |v|^{-\gamma} (\delta_{ij} - \gamma |v|^{-2} v_iv_j)\right)
			- \bar c[f]
		\right]\\
		&\leq \underline f \left[
			 -  \beta' |v|^{2-\gamma} - C^{-1} \beta^2 |v|^{2-\gamma} + C \vv^2.
		\right]
\end{split}\]
where $C$ is a constant depending only on $\delta_1$ and the physical quantities~\eqref{eq:physical}.  In the inequality we used the anisotropic upper bounds for $\bar a[f]$ of \cite[Appendix A]{henderson2017smoothing}, the lower bounds of
 \Cref{l:lower}, and the fact that $\bar c[f] \geq 0$.

At this point, we choose $\beta(t) = 1+C_1/(t-\underline T/2)$. 
We choose the constant $C_1$ large enough that $-\beta'(t) |v|^{2-\gamma} + C\vv^2\leq  C^{-1}\beta^2(t)|v|^{2-\gamma}$ for all $|v|\geq 1$. 
Hence, $L\underline f \leq 0$. Since $f(t,x,v) \geq \delta_1 \geq \underline f(t,x,v)$ whenever $t\in [\underline T/2,T]$ and $|v| = 1$, and we can extend $\underline f$ smoothly by zero when $t= \underline T/2$ and $|v|\geq 1$, we have $\underline f \leq f$ on the parabolic boundary of $[\underline T/2, T]\times \R^3 \times \{|v|\geq 1\}$. It follows from the comparison principle applied to $f$ and $\underline f$ that $\underline f \leq f$ in $[\underline T/2, T]\times \R^3 \times \{|v|\geq 1\}$.  This concludes the proof.
\end{proof}

\subsection{Optimality of self-generating lower bounds}\label{s:A}

We show that the asymptotic behavior of the lower bounds in Theorem \ref{thm:mass}(ii) cannot be improved in general.  We find quite general initial data such that corresponding upper bounds hold.  This upper bound may perhaps be known in the space homogeneous setting, but we are unable to find a reference.
\begin{proposition}\label{p:upper_Gaussian}
	Let $T>0$ and $f$ be any solution of~\eqref{e:divergence} on $[0,T]\times\R^3\times\R^3$ such that the physical quantities~\eqref{eq:physical} remain bounded on $[0,T]$ and such that, for some $\rho, K>0$,
	\[
		f(0,x,v) \leq K \exp\{- \rho |v|^{2-\gamma}\}.
	\]
	Then there exists $\alpha$ and $C$, depending only on $\gamma$ and the  quantities~\eqref{eq:physical} such that
	\[
		f(t,x,v) \leq K \exp\left\{\alpha t - \frac{\rho |v|^{2-\gamma}}{2C\rho t + 1}\right\} \qquad \text{ for all } (t,x,v) \in [0,T]\times \R^3\times\R^3.
	\]
\end{proposition}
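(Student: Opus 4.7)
The plan is to construct an $x$-independent supersolution and apply a comparison principle. Let
\[
\bar f(t,v) := K\exp\bigl\{\alpha t - \beta(t)\,|v|^{2-\gamma}\bigr\}, \qquad \beta(t) := \frac{\rho}{1 + 2C\rho t},
\]
so that $\beta(0) = \rho$, $\beta'(t) = -2C\beta(t)^2$, and $\bar f(0,v) = Ke^{-\rho|v|^{2-\gamma}} \geq f(0,x,v)$ by hypothesis. I then view $f$ as satisfying the linear equation $Lf = 0$, where
\[
L u := \partial_t u + v\cdot\nabla_x u - \tr\bigl(\bar a[f] D_v^2 u\bigr) - \bar c[f]\, u,
\]
with $\bar a[f]$ and $\bar c[f]$ frozen as given $(t,x,v)$-coefficients. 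Once the constants $\alpha,C$ (to be chosen depending only on $\gamma$ and~\eqref{eq:physical}) are fixed so that $L\bar f \geq 0$, the difference $w:=f-\bar f$ satisfies $Lw\le 0$ with $w(0,\cdot,\cdot)\le 0$. A standard comparison principle applies here because $f$ has Gaussian decay in $v$ (e.g.\ via the propagation-of-Gaussians result in \cite{cameron2017landau}) while $\bar f$ has strict super-Gaussian decay, so $w$ cannot have a positive maximum at infinity; this yields $f\leq \bar f$.

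The supersolution verification is a direct calculation. Since $\bar f$ is $x$-independent, $v\cdot\nabla_x\bar f = 0$. A short computation gives
\[
\bar f^{-1}\partial_t\bar f = \alpha + 2C\beta^2|v|^{2-\gamma},
\]
\[
\bar f^{-1} D^2_{v_iv_j}\bar f = \beta^2(2-\gamma)^2|v|^{-2\gamma}v_iv_j - \beta(2-\gamma)|v|^{-\gamma}\delta_{ij} + \beta(2-\gamma)\gamma |v|^{-\gamma-2}v_iv_j.
\]
In $-\tr(\bar a[f] D_v^2\bar f)/\bar f$, the two terms proportional to $\beta$ (not $\beta^2$) contribute $\beta(2-\gamma)|v|^{-\gamma}\bar a_{ii}[f] - \beta(2-\gamma)\gamma|v|^{-\gamma-2}v_iv_j\bar a_{ij}[f]$; both are non-negative (using $\bar a[f]\ge 0$ as a form, $2-\gamma>0$, and $\gamma<0$), so they only help and can be discarded. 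The remaining negative contribution, $-\beta^2(2-\gamma)^2|v|^{-2\gamma}v_iv_j\bar a_{ij}[f]$, is estimated via the explicit identity
\[
v_iv_j\bar a_{ij}[f](v) = c_\gamma\int|v|^2|z|^2\sin^2\varphi_{v,z}|v-z|^\gamma f(z)\,\mathrm{d}z,
\]
together with Lemma~\ref{l:coeffs}(a), to obtain the unified bound $v_iv_j\bar a_{ij}[f] \leq C_1\min\{|v|^2,\vv^{\gamma+2}\}$ with $C_1$ controlled by $\gamma$ and \eqref{eq:physical}. Since $|v|^{2-2\gamma}\le |v|^{2-\gamma}$ for $|v|\le 1$ (as $-\gamma>0$) and $\vv^{\gamma+2}\lesssim |v|^{\gamma+2}$ for $|v|\ge 1$, this yields in both regimes
\[
(2-\gamma)^2\beta^2|v|^{-2\gamma}\,v_iv_j\bar a_{ij}[f] \;\leq\; (2-\gamma)^2 C_\gamma C_1\,\beta^2\,|v|^{2-\gamma}.
\]
A similar convolution estimate on $\bar c[f]$ gives $\bar c[f] \leq C_\ast$.

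Choosing $C := (2-\gamma)^2 C_\gamma C_1$ and $\alpha := C_\ast$ (both depending only on $\gamma$ and \eqref{eq:physical}), we conclude
\[
\frac{L\bar f}{\bar f} \;\geq\; \alpha + \bigl(2C - (2-\gamma)^2 C_\gamma C_1\bigr)\beta^2|v|^{2-\gamma} - C_\ast \;\geq\; 0.
\]
The main obstacle is securing the $|v|^2$ scaling of $v_iv_j\bar a_{ij}[f]$ as $|v|\to 0$ with a constant depending only on the quantities in~\eqref{eq:physical}: for $\gamma\in[-3,-2]$ this is immediate from the explicit formula together with the assumed $\|f\|_{L^\infty}$ bound, while for $\gamma\in(-2,0)$ one needs a standard kinetic-theory $L^\infty$ bound on $f$ (derivable from $M$ and $E$ alone, using the local $L^\infty$ theory of \cite{silvestre2015landau}) to control the integral $\int_{|v-z|\le 1}|v-z|^\gamma f(z)\,\mathrm{d}z$ near $v$. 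Once this is in hand, the comparison step is routine and completes the proof.
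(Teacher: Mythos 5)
Your overall strategy matches the paper's exactly: you build the same supersolution $\overline f = K e^{\alpha t - \beta(t)|v|^{2-\gamma}}$ with $\beta(t) = \rho/(1+2C\rho t)$, compute $L\overline f$, discard the favorable $O(\beta)$ Hessian terms, and close with a comparison principle. The paper simply invokes the upper bounds for $\bar a[f]$ and $\bar c[f]$ from \cite[Appendix A]{henderson2017smoothing} to arrive at $L\overline f \geq \overline f[\alpha - \beta'|v|^{2-\gamma} - C\beta^2\vv^{2-\gamma} - C\vv^\gamma]$, then picks $\beta$ and $\alpha$; your writeup does the same algebra by hand and is a touch more precise in pushing the estimate to $|v|^{2-\gamma}$ (rather than $\vv^{2-\gamma}$), which is in fact what one wants for $\alpha$ to be genuinely independent of $\rho$.

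The one point worth correcting is your final paragraph, where you identify ``securing the $|v|^2$ scaling of $v_iv_j\bar a_{ij}[f]$ as $|v|\to 0$'' as the main obstacle for $\gamma\in(-2,0)$ and propose to resolve it by importing an $L^\infty$ bound from \cite{silvestre2015landau}. That reference's $L^\infty$ estimate requires a lower bound on the mass density (and other assumptions) and does not follow from $M$ and $E$ alone, so this step as stated is shaky. Fortunately, no $L^\infty$ bound is needed here. From the identity $v\cdot\bar a[f]v = a_\gamma |v|^2 \int |z|^2\sin^2\varphi\,|v-z|^\gamma f(z)\,\dd z$ one has $|z|\sin\varphi \leq |v-z|$ (the distance from $z$ to the line through $v$ and the origin is at most $|v-z|$), so
\[
\frac{v\cdot\bar a[f]v}{|v|^2} \leq a_\gamma \int |v-z|^{\gamma+2} f(z)\,\dd z.
\]
For $\gamma\in(-2,0)$ the exponent $\gamma+2$ is positive, so splitting $|v-z|\lessgtr 1$ bounds this by $C(M + |v|^2 M + E)$; combined with the large-$|v|$ anisotropic bound $v\cdot\bar a[f]v\lesssim\vv^{\gamma+2}$ already cited, one obtains $v\cdot\bar a[f]v\lesssim |v|^2\vv^\gamma$ uniformly, with constant depending only on $M$ and $E$. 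For $\gamma\in[-3,-2]$ the $L^\infty$ bound is part of \eqref{eq:physical} anyway. With that elementary replacement, your proof is complete and self-contained.
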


Before beginning the proof, we note that there exist $f$ satisfying the conditions of~\Cref{p:upper_Gaussian}, see, for example,~\Cref{t:LWP}.

\begin{proof}
In order to conclude, we need only construct a super-solution in $[0,T]\times \R^3\times \R^3$.  Let $L$ be as in the proof of \Cref{thm:mass}.(ii).  Let $\overline f(t,x,v) = K e^{\alpha t - \beta(t) |v|^{2-\gamma}}$, where $\alpha>0$ and $\beta:\R_+ \to \R_+$ are to be determined. 
Computing $L\overline f$ directly as in the proof of Theorem \ref{thm:mass}.(ii), and using the upper bounds for $\bar a[f]$ and $\bar c[f]$ in \cite[Appendix A]{henderson2017smoothing} and the fact that $\bar a[f]$ is nonnegative definite, we find a constant $C$, depending only on $\gamma$ and the quantities~\eqref{eq:physical}, such that
\[
	L\overline f \geq \overline f \left[
			\alpha -  \beta'(t) |v|^{2-\gamma} - C\beta^2(t) \vv^{2-\gamma}  - C\vv^\gamma
		\right].
\]
Then, we define $\beta(t) = \rho/(2\rho C t + 1)$, so that the positive term $-\beta'(t) |v|^{2-\gamma}$ dominates for large $|v|$. Choosing $\alpha >0$ large enough, we have $L\overline f \geq 0$ for small $|v|$ as well. 
By our assumption on the initial data, the comparison principle implies that $f \leq \overline f$ on $[0,T]\times \R^3\times\R^3$.  This concludes the proof.
\end{proof}


\section{Smoothing and continuation of solutions}\label{s:app}


We are now ready to show that our solutions to \eqref{e:divergence} are $C^\infty$ in all three variables. 

\begin{proof}[Proof of \Cref{thm:smooth}]
We will apply the main theorem of \cite{henderson2017smoothing}. The Gaussian decay of $f$ in $v$ (which is uniform in $t$ and $x$) implies uniform upper bounds on $M(t,x)$, $E(t,x)$, and $H(t,x)$. Theorem \ref{thm:mass} implies that $M(t,x) \geq m_{t,x}>0$ for each $t\in [0,T]$, $x\in \R^3$, and that a uniform, positive lower bound on $M(t,x)$ holds in any cylinder $Q_r(t_0,x_0) = (t_0-r^2]\times B_r(x_0)$ so long as $r^2 < t_0$.  As written, the smoothing theorem \cite[Theorem 1.2]{henderson2017smoothing} 
requires a lower bound on $M(t,x)$ that is uniform in $t$ and $x$. 
However, these proofs are entirely local in $t$ and $x$, and still go through with our locally uniform lower bound on $M(t,x)$.  
Hence, by the (locally uniform) lower bound on the mass $M(t,x)$, Gaussian decay in $v$ of $f$, and the upper bounds on the physical quantities~\eqref{eq:physical} and $H(t,x)$, we conclude $f$ is in $C^\infty([0,T]\times \R^3\times \R^3)$ via \cite[Theorem 1.2]{henderson2017smoothing}.  Since $f$ has uniform Gaussian decay in $v$, the proof in \cite{henderson2017smoothing} shows that all partial derivatives have Gaussian decay which is locally uniform in $t$ and $x$, with constants depending on the order of the derivative. This implies the moment bounds in the statement of the theorem.

 If, in addition, $f_{in}$ is well-distributed, the lower bound on $M(t,x)$ is uniform on $[\underline T/2, T]\times \R^3$ for any $\underline T\in (0,T]$. Together with the uniform Gaussian decay, this implies, via \cite[Theorem 1.2]{henderson2017smoothing} applied to $f(\underline T/2 + t, x, v)$, that all partial derivatives of $f$ satisfy Gaussian-in-$v$ estimates that are uniform on $[\underline T, T]\times \R^3$. 
\end{proof}

Finally, we show that solutions can be extended so long as they are well-distributed initially (see Definition \ref{d:well}) and the hydrodynamic quantities remain bounded. The following is a more precise statement of Theorem \ref{t:extension}:

\begin{theorem}\label{t:extension2}
	Suppose that the assumptions of \Cref{thm:smooth} hold for some $\rho >0$ and $T \in (0, \overline T]$ where $\overline T > 0$. 
	Then there exists $T_1>0$, depending only on $\gamma$, $\rho$, and $\|e^{\rho\vv^2} f(T,\cdot,\cdot)\|_{H^4_{\ul}}$, such that $f$ can be extended to be a solution of~\eqref{e:divergence} in $Y_{T+T_1}^4$. 
	
	If, in addition, the initial data satisfies $e^{\rho_0\vv^2}f_{in}\in H_{\ul}^4$ for some $\rho_0\geq \rho$, and $f_{in}$ is well-distributed with parameters $R$, $\delta$, $r$,  then the solution $f$ satisfies $e^{\min(\mu,\rho_0/2)\vv^2}f \in Y_T^4$ for some $\mu>0$. The constant $\mu$ and the time of extension $T_1$ depend only on $\overline T$, $\gamma$, $R$, $\delta$, $r$, $\|e^{\rho_0\vv^2}f_{in}\|_{H^k_{\ul}}$, and the bounds on \eqref{eq:physical}. 
	In particular, $T_1$ may be chosen independently of $\rho$ and $\|e^{\rho\vv^2} f(T,\cdot,\cdot)\|_{H^k_{\ul}}$.
\end{theorem}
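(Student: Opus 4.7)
The first claim reduces to a direct application of Theorem \ref{t:LWP} at time $T$. Since $e^{\rho\vv^2} f(T,\cdot,\cdot) \in H^4_{\ul}$ by hypothesis, Theorem \ref{t:LWP} produces, after a time shift, a unique solution $\tilde f$ of \eqref{e:divergence} on $[T, T+T_1]$ with initial data $f(T,\cdot,\cdot)$, where $T_1$ depends only on $\gamma$, $\rho$, and $\|e^{\rho\vv^2} f(T,\cdot,\cdot)\|_{H^4_{\ul}}$. The uniqueness assertion of Theorem \ref{t:LWP}, applied with initial time $T$, then ensures the concatenation of $f$ and $\tilde f$ is the desired solution in $Y^4_{T+T_1}$.

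For the second claim, the key is to produce a Gaussian bound on $f(T,\cdot,\cdot)$ with rate $\mu$ depending only on the well-distribution parameters $R,\delta,r$, the bounds \eqref{eq:physical}, $\overline T$, $\gamma$, and $\|e^{\rho_0\vv^2}f_{in}\|_{H^4_{\ul}}$---crucially, independent of $\rho$. Once such a $\mu$ is in hand, the first claim applied with $\rho$ replaced by $\min(\mu,\rho_0/2)$ yields an extension time $T_1$ with the stated uniformity.

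The plan for producing $\mu$ has three steps. First, since $f_{in}$ is well-distributed, Theorem \ref{thm:mass}(ii) provides a uniform pointwise lower bound of the form $f(t,x,v) \geq \nu\exp(-\rho_1|v|^{2-\gamma})$ on $[\underline T/2, T]\times\R^3\times\R^3$ for some fixed small $\underline T$, with $\nu,\rho_1$ depending only on the data listed above. Lemma \ref{l:lower} then gives a uniform lower ellipticity bound $\bar a_{ij}[f]e_ie_j \geq c(1+|v|)^\gamma$. Combined with the anisotropic upper bounds on $\bar a[f]$ and $\bar c[f]$ from \cite[Appendix~A]{henderson2017smoothing}, which depend only on \eqref{eq:physical}, we have full coefficient control on $[\underline T,T]\times\R^3\times\R^3$. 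Second, adapting the barrier construction of Proposition \ref{p:upper_Gaussian} (cf.\ also \cite{cameron2017landau}), I would build a supersolution $\overline f = K\exp(\alpha t - \mu\vv^2)$ of the linear operator with coefficients $\bar a[f], \bar c[f]$, choosing $\mu$ sufficiently small (depending only on the data above) that the negative contribution $-\mu\bar a_{ij}[f]\delta_{ij}$ dominates the quadratic-in-$v$ term $\mu^2 \bar a_{ij}[f] v_iv_j$. Comparison yields the Gaussian upper bound $f(t,x,v)\leq K\exp(-\mu\vv^2)$ on $[\underline T,T]\times\R^3\times\R^3$. Third, Theorem \ref{thm:smooth} (well-distributed case) provides uniform $H^4_{\ul}$ bounds on $\vv^\ell f$ over $[\underline T,T]\times\R^3$; interpolating these against the pointwise Gaussian bound gives $e^{\min(\mu,\rho_0/2)\vv^2} f(T,\cdot,\cdot)\in H^4_{\ul}$ with norm controlled by the prescribed data.

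The main obstacle is carrying out the barrier argument with $\mu$ independent of $\rho$ across all $\gamma\in[-3,0)$. For $\gamma\in(-2,0)$ this follows fairly closely from \cite{cameron2017landau}. For $\gamma\in[-3,-2]$, one must absorb the term $\mu^2(2-\gamma)^2|v|^{-2\gamma}v_iv_j\bar a_{ij}[f]$ arising in $L\overline f$ into the gain from $-\mu(2-\gamma)\bar a_{ij}[f]\delta_{ij}$, which requires both the precise anisotropic upper bounds on $\bar a[f]$ and the additional controls on $P$ and $\|f\|_{L^\infty}$ appearing in \eqref{eq:physical}. Once this is in place, every constant in the barrier depends only on \eqref{eq:physical}, $R,\delta,r$, $\overline T$, and $\gamma$, and the claimed uniformity of $T_1$ in the extension step follows.
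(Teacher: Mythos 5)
Your first claim is handled correctly and is identical to the paper's argument: apply Theorem \ref{t:LWP} at time $T$ and concatenate using uniqueness. Your treatment of the second claim has the right ambition—produce a Gaussian rate $\mu$ and $H^4_{\ul}$ bound at time $T$ controlled independently of $\rho$—but the execution has two genuine gaps, one of which is the real crux of the statement.

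The central gap is that you never invoke the uniqueness observation that the paper uses to \emph{decouple from $\rho$}. The paper's proof begins the second part by noting: ``by the uniqueness in Theorem \ref{t:LWP}, there is some $t_0 \in (0,T)$ depending on $\|e^{\rho_0\vv^2}f_{in}\|_{H^4_{\ul}}$ such that $e^{\rho_0\vv^2/2}f \in Y^4_{t_0}$.'' This is what gives a time $t_0$, and a Gaussian rate $\rho_0/2$ and norm at that time, depending only on the initial data. Without it, the comparison constant $K$ in your barrier ($\overline f(\underline T) \geq f(\underline T)$) must come from a Gaussian bound on $f(\underline T)$, and that bound is obtained from the hypothesis $e^{\rho\vv^2}f \in Y^4_T$—i.e., $K$ secretly depends on $\rho$ and on $\|e^{\rho\vv^2}f\|_{Y^4_T}$, defeating the ``$T_1$ independent of $\rho$'' assertion you are trying to establish. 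The same $\rho$-dependence infects your Step 3: the constants in \Cref{thm:smooth} inherit the decay rate $\rho$ from its hypothesis, so ``uniform $H^4_{\ul}$ bounds on $\vv^\ell f$'' are not available with the claimed dependence. The paper sidesteps both issues by re-launching the argument at $t_0$ and citing \cite[Theorem 1.2]{henderson2017smoothing} as a black box (with the $M$-lower-bound and $H$-upper-bound hypotheses replaced by the ellipticity lower bound coming from \Cref{thm:mass} and \Cref{l:lower}); that theorem directly returns a Gaussian-weighted Sobolev bound with the correct dependence, rather than a pointwise bound plus moment bounds that you then try to interpolate.

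The second, smaller gap is in Step 3 itself: even granting $\rho$-independent inputs, interpolating a pointwise bound $f \leq K e^{-\mu\vv^2}$ against polynomially-weighted Sobolev bounds $\|\vv^\ell f\|_{H^4}$ does not yield $e^{c\vv^2}f \in H^4_{\ul}$, because the derivatives of $f$ are not covered by the pointwise Gaussian bound—you would need Gaussian decay of all derivatives up to order four, which is precisely what \cite[Theorem 1.2]{henderson2017smoothing} supplies and what your plan is trying to avoid citing. Finally, the term $\mu^2(2-\gamma)^2|v|^{-2\gamma}v_iv_j\bar a_{ij}$ you flag as the main obstacle comes from differentiating a barrier of the form $e^{-\mu|v|^{2-\gamma}}$ (as in \Cref{p:upper_Gaussian}), not the Gaussian barrier $e^{-\mu\vv^2}$ you state; for the Gaussian barrier the relevant competition is $2\mu\bar a_{ii}$ against $4\mu^2\bar a_{ij}v_iv_j$, both $\sim \vv^{\gamma+2}$, and choosing $\mu$ small does close it once the ellipticity lower bound from \Cref{l:lower} is in hand—so that part is not the real difficulty.
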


\begin{remark}
	The significance of the decay rate $\min(\mu,\rho_0/2)$ is as follows: in constructing a solution $f$ in Theorem \ref{t:LWP}, our first step was to divide $f$ by $e^{-(\rho_0-\kappa t)\vv^2}$ for a positive constant $\kappa$ that is more or less arbitrary, but the resulting time of existence $T$ depends heavily on $\rho_0/\kappa$ and $\kappa$.  This theorem allows us to remove this dependence when the initial data is well-distributed.
\end{remark}

\begin{proof}


For a solution $f$ with $e^{\rho\vv^2}f \in Y_T^4$, we of course have $e^{\rho\vv^2} f(T,\cdot,\cdot) \in H^k_{\ul}$, and we may apply Theorem \ref{t:LWP} to obtain a solution on $[T,T+T_1]$ for some $T_1$ depending on $\rho$, $\gamma$, and $\|e^{\rho\vv^2}f(T,\cdot,\cdot)\|_{H^k_{\ul}}$. We concatenate this solution with $f$ to obtain a solution on $[0,T+T_1]$.

Next, suppose that $f_{in}$ is well-distributed with parameters $R$, $\delta$, $r$, and that $e^{\rho_0\vv^2}f_{in} \in H^4_{\ul}$. By the uniqueness in Theorem \ref{t:LWP}, there is some $t_0\in(0,T)$ depending on $\|e^{\rho_0\vv^2}f_{in}\|_{H^4_{\ul}}$ such that $e^{\rho_0\vv^2/2}f \in Y_{t_0}^4$. 
We want to apply \cite[Theorem~1.2]{henderson2017smoothing} to show $f$ and its derivatives up to order $4$ have Gaussian decay up to time $T$, with constants as in the statement of the current theorem. As written, \cite[Theorem~1.2]{henderson2017smoothing} requires a uniform upper bound on $H(t,x)$ and a uniform lower bound on $M(t,x)$, but the only place in \cite{henderson2017smoothing} where these two assumptions play a role is in showing a lower ellipticity bound for $\bar a[f]$ of the form \eqref{e:a_lower}. Since $f_{in}$ is well-distributed, Theorem \ref{thm:mass} and Lemma \ref{l:lower} imply such an ellipticity bound holds, with constants depending only on $t_0$, $\overline T$, $\gamma$, the quantities in \eqref{eq:physical}, $R$, $\delta$, and $r$. With this modification, we can apply \cite[Theorem~1.2]{henderson2017smoothing}\footnote{The Gaussian bounds of $f$ were originally proved in~\cite{cameron2017landau} for the case $\gamma \in (-2,0)$.  They were extended to the case $\gamma \in [-3,-2]$ in \cite{henderson2017smoothing} and to apply to derivatives of $f$ via Schauder estimates.} and conclude that there exist positive constants $C$ and $\mu$, depending only on $t_0$, $\overline T$, $\gamma$, $R$, $\delta$, $r$, and the upper bounds on \eqref{eq:physical}, such that
\[
\|e^{\min(\mu,\rho_0/2) \vv^2} f(t,\cdot,\cdot)\|_{H^4_{\ul}} \leq C, \quad t\in [t_0,T].
\]
Now we may proceed as in the first paragraph of the proof and obtain a solution on $[0,T+T_1]$, with $T_1$ as in the statement of the theorem.
\end{proof}

\bibliographystyle{abbrv}
\bibliography{landau_paper}

\end{document}